\newcommand{\myboldmath}[1]{\mathbbm{#1}}
\newcommand{\R}{\myboldmath{R}}
\newcommand{\G}{\myboldmath{G}}
\newcommand{\N}{\myboldmath{N}}
\newcommand{\Z}{\myboldmath{Z}}
\newcommand{\1}{\mathbf{1}}
\newcommand{\U}{\mathcal{B}}
\theoremstyle{plain}
\newtheorem{theorem}{Theorem}
\newtheorem{proposition}[theorem]{Proposition}
\newtheorem{lemma}[theorem]{Lemma}
\newtheorem*{lemma*}{Auxiliary Lemma}
\theoremstyle{definition}
\newtheorem{remark}[theorem]{Remark}
\theoremstyle{remark}
\newcommand{\Lap}{\Delta}
\newcommand{\wlap}{\mathcal{L}}
\newcommand{\up}{\textup{up}}
\newcommand{\down}{\textup{down}}
\DeclareMathOperator{\lk}{lk}
\newcommand{\im}{\operatorname{im}}
\newcommand{\orcomp}[3][]{B^{{#2}}({#3})^\perp_{{#1}}}
\newcommand{\error}{E}
\begin{document}
\normalem

\title{On Eigenvalues of Random Complexes\thanks{An extended abstract of this paper appeared at SoCG 2012. Research supported by the Swiss National Science Foundation (SNF Projects 200021-125309 and 200020-138230).}}
\author{Anna Gundert, \footnote{Universit\"{a}t zu K\"{o}ln, Weyertal 86--90, 50923 K\"{o}ln, Germany. \texttt{anna.gundert@uni-koeln.de}. Work on this paper was conducted at the Institut f\"{u}r Theoretische Informatik, ETH Z\"{u}rich.} \and Uli Wagner\footnote{IST Austria, Am Campus 1, 3400 Klosterneuburg, Austria. {\texttt uli@ist.ac.at}.}}

\date{\today}

\maketitle

\begin{abstract}
We consider higher-dimensional generalizations of the normalized Laplacian and the adjacency matrix of graphs
and study their eigenvalues for the Linial--Meshulam model $X^k(n,p)$ of random $k$-dimensional simplicial complexes on $n$ vertices.
We show that for $p=\Omega(\log n/n)$, the eigenvalues of each of the matrices are a.a.s.~concentrated around two values. The main tool, which goes back to the work of Garland, are arguments that relate the eigenvalues of these matrices to those of graphs that arise as links of $(k-2)$-dimensional faces. Garland's result concerns the Laplacian; we develop an analogous result for the adjacency matrix.

The same arguments apply to other models of random complexes which allow for dependencies between the choices of $k$-dimensional
simplices. In the second part of the paper, we apply this to the question of possible higher-dimensional analogues of the discrete Cheeger inequality, which in the classical case of graphs relates the eigenvalues of a graph and its edge expansion. It is very natural to ask whether this generalizes to higher dimensions and, in particular, whether the eigenvalues of the higher-dimensional Laplacian capture the notion of \emph{coboundary expansion} --- a higher-dimensional generalization of edge expansion that arose in recent work of Linial and Meshulam and of Gromov; this question was raised, for instance, by Dotterrer and Kahle. We show that this most straightforward version of a higher-dimensional discrete Cheeger inequality fails, in quite a strong way: For every $k\geq 2$ and $n\in \N$, there is a $k$-dimensional complex $Y^k_n$ on $n$ vertices that has strong spectral expansion properties (all nontrivial eigenvalues of the normalised $k$-dimensional Laplacian lie in the interval $[1-O(1/\sqrt{n}),1+O(1/\sqrt{n})]$) but whose coboundary expansion is bounded from above by $O(\log n/n)$ and so tends to zero as $n\rightarrow \infty$; moreover, $Y^k_n$ can be taken to have vanishing integer homology in dimension less than $k$. 
\end{abstract}
%

\section{Introduction}
\label{sec:introduction}
Eigenvalues of graphs are a classical and well-studied subject, which goes back to a fundamental paper of Kirchhoff \cite{Kirchhoff:1847di}, in which he used the combinatorial graph Laplacian to analyze electrical networks and formulated his celebrated \emph{Matrix-Tree Theorem} for the number of spanning trees of a graph (which includes, as the special case of the complete graph, Cayley's \cite{Cayley:Trees-1889} famous formula $n^{n-2}$ for the number of labeled trees on $n$ vertices). 

The eigenvalues of a graph $G$ encode many important properties of $G$, in particular regarding connectivity and \emph{expansion} properties of $G$ (the mixing rate of a random walk on $G$) as well as other \emph{quasirandomness} properties of $G$. Because of this, eigenvalues of graphs also play a major role in the design and analysis of algorithms, including heuristic and approximation algorithms for hard graph partitioning problems (\emph{spectral partitioning}) and Markov Chain Monte Carlo approximation algorithms for hard counting problems. We cannot hope to survey the relevant literature here and refer the reader to the survey articles and monographs \cite{Chung:SpectralGraphTheory-1997,Jerrum:CountingSamplingIntegrating-2003,KrivelevichSudakov:PseudorandomGraphs-2006,
HooryLinialWigderson:ExpanderGraphs-2006,LevinPeresWilmer:MarkovChainsMixingTimes-2009,CvetkovicRowlinsonSimic:IntroductionGraphSpectra-2010,Zhang:2011un} for background and further references.

In the present paper, we consider eigenvalues of higher-dimensional simplicial complexes and, in a nutshell, prove two results: First, generalizing well-known results about random graphs $G(n,p)$, we show (Theorem~\ref{EigenvaluesRandomComplexes}) that the Linial--Meshulam $k$-dimensional random complexes are \emph{asymptotically almost surely} (\emph{a.a.s.}), i.e., with probability tending to $1$ as $n\rightarrow \infty$, strongly \emph{spectrally expanding} (their eigenvalues are strongly concentrated around two values). Second, we give a probabilistic construction (Theorem~\ref{thm:counterexample}) of $k$-dimensional complexes that are strong spectral expanders but that fail to have the property of \emph{coboundary expansion} --- a generalization of edge expansion that arose in the recent work of 
Linial and Meshulam \cite{LinialMeshulam:HomologicalConnectivityRandom2Complexes-2006} and of Gromov \cite{Gromov:SingularitiesExpanders2-2010}. This shows that the most straightforward attempt of generalizing the discrete Cheeger--Buser inequalities to higher-dimensional complexes fails and answers a question raised, e.g., by Dotterrer and Kahle \cite{Dotterrer:2010fk}.
Before stating these results more precisely, we first recall the basic definitions and terminology.

\subsubsection*{Adjacency Matrix and Laplacians of Graphs}
We recall the three ($n\times n$)-matrices commonly associated with a graph\footnote{Throughout this paper, we will assume that $G$ is simple, i.e., we do not consider loops or multiple edges.} $G=(V,E)$ on $n$ vertices. 
The \emph{adjacency matrix} $A=A(G)\in \{0,1\}^{V\times V}$ has entries defined by $A_{u,v}=1$ iff $\{u,v\}\in E$. 
The \emph{combinatorial Laplacian} is defined as $L=L(G):=D-A$, where $D=D(G)\in \R^{V\times V}$ is the diagonal matrix with entries $D_{v,v}=\deg_G(v)$, the \emph{degrees} of the vertices.
Both of these are \emph{symmetric} matrices and hence have a multiset of $n$ real eigenvalues, called the \emph{spectrum}.

The eigenvalues of $A$ and of $L$ turn out to be quite sensitive to the maximum and minimum degree of $G$. For graphs with very non-uniform degree distributions, it is often more convenient to consider the \emph{normalized Laplacian}, which is defined as $\Lap=\Lap(G):=D^{-1}L=I-D^{-1}A$, where $I\in \R^{V\times V}$ is the identity matrix.\footnote{%
Strictly speaking, $D^{-1}$ is defined only if there are no isolated vertices, i.e., if $\deg_G(v)>0$ for all $v\in V$, which will be the case of primary interest to us. If there are isolated vertices, we adopt the convention that $D^{-1}_{v,v}=0$ whenever $\deg_G(v)=0$ and retain the definition $\Lap=D^{-1}L$. (The second equation $\Lap=I-D^{-1}A$ no longer holds in this case, since $\Lap$ has zero diagonal entries at isolated vertices.)

Sometimes, (e.g., in \cite{Chung:2003wn,Chung:SpectralGraphTheory-1997,CojaOghlan:2007gj}) a slightly different matrix is referred to as the normalized Laplacian, namely $\mathscr{L}:=I-D^{-1/2}AD^{-1/2}$. Assuming that there are no isolated vertices, $\Lap$ and $\mathscr{L}$ have the same spectra, since $\Lap x=\lambda x$ for some $\lambda\in \R$ and $x\in \R^{V}$ iff $\mathscr{L}y=\lambda y$, where $y=D^{1/2}x$.} 

The normalized Laplacian is not symmetric but corresponds to a self-adjoint operator on $\R^n$ with respect to a weighted inner product (see  Section~\ref{sec:preliminaries}) and so also has $n$ real eigenvalues.
 Both versions of the Laplacian are \emph{positive semidefinite} relative to their respective inner products and so have nonnegative eigenvalues, typically listed in increasing order $\lambda_1(L)\!\leq\!\ldots\!\leq\!\lambda_n(L)$ and $\lambda_1(\Lap)\!\leq\!\ldots\!\leq\! \lambda_n(\Lap)$. The ``all-1'' vector ${\bf 1}=(1,\ldots,1)^T$ satisfies $L\1=\Lap\1=0$, hence $\lambda_1(L)=\lambda_1(\Lap)=0$, which is called the \emph{trivial eigenvalue}. 
For the adjacency matrix, the eigenvalues are typically listed in \emph{decreasing} order as $\mu_1(A)\!\ge\!\!\ldots\!\geq\! \mu_n(A)$. Define $\mu(G):=\max\{\mu_2(A),|\mu_n(A)|\}$.

The graph $G$ is connected iff  $\lambda_2(L)>0$ iff $\lambda_2(\Lap)>0$. More generally, the multiplicity of $0$ as an eigenvector of either Laplacian equals the number of connected components of $G$, and if $G$ is connected, then the second eigenvalue $\lambda_2$ of either Laplacian controls the \emph{edge expansion} of the graph (see the discussion below). 

\subsubsection*{Eigenvalues of Random Graphs} Let $G(n,p)$ be the binomial random graph on $n$ vertices, for which every edge is included independently with probability $p=p(n)$, and let $d=p(n-1)$ be the expected average degree. 
%
%
%
We summarize known concentration results on the spectra of $G(n,p)$ as follows.
See Section~\ref{subsec:EigenvaluesAdjacencyRandomGraphs} for a more detailed account.
\begin{theorem}[\cite{CojaOghlan:2007gj,Feige:2005hp,Hoffman:2012}] 
\label{thm:concentration-graph-eigenvalues}
For every $c>0$ and every $\gamma>c$ there exists a constant $C>0$ such that for $p\geq (1+\gamma)\cdot \log n/n$ and $d=p(n-1)$ the following statements hold with probability at least $1-n^{-c}$:
\begin{enumerate}
\item[\textup{(i)}] $\mu_1(A(G(n,p))) \in [d - C\cdot \sqrt{d}, d +C\cdot \sqrt{d}]$ and $\mu(G(n,p))\leq C\cdot \sqrt{d}$;
\item[\textup{(ii)}] $1-\frac{C}{\sqrt{d}}\!\leq\!\lambda_2(\Lap(G(n,p)))\!\leq\!\ldots\!\leq\!\lambda_n(\Lap(G(n,p)))\!\leq\!1+\frac{C}{\sqrt{d}}.$
\end{enumerate}
For the adjacency matrix \textup{(i)} even holds for $p\geq \gamma\cdot \log n/n$.
\end{theorem}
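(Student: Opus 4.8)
The plan is to deduce both parts from a single spectral-norm estimate for the centered adjacency matrix. Write $\bar A := \E[A] = p(J-I)$, where $J$ denotes the all-ones matrix; its spectrum consists of $d$ (with eigenvector $\mathbf 1$) and $-p$ with multiplicity $n-1$. The technical core is to show that, with the stated probability, $\|A - \bar A\| \le C\sqrt d$, where $\|\cdot\|$ is the operator norm. Granting this, part (i) follows from Weyl's inequalities: since $\mu_1(\bar A) = d$ while $\bar A$ acts as $-p\,I$ on $\mathbf 1^{\perp}$, we get $|\mu_1(A) - d| \le \|A - \bar A\|$ and $|\mu_i(A)| \le p + \|A - \bar A\|$ for $i \ge 2$, and since $p = o(\sqrt d)$ both are $O(\sqrt d)$, giving $\mu(G) \le C\sqrt d$.

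For part (ii) I would combine the norm bound with a lower bound on the minimum degree $\delta := \min_v \deg(v)$. A Chernoff/large-deviation estimate for each $\deg(v) \sim \mathrm{Bin}(n-1,p)$ together with a union bound over the $n$ vertices shows that, for a suitable constant $\alpha = \alpha(\gamma) > 0$, we have $\delta \ge \alpha d$ with probability at least $1 - n^{-c}$; this is precisely where the hypothesis $\gamma > c$ enters, as at the threshold density the rate for $\deg(v)$ falling below $\alpha d$ behaves like $n^{-(1+\gamma) I(\alpha)}$, and one needs $(1+\gamma)I(\alpha) > 1 + c$, which is solvable for some $\alpha > 0$ exactly when $\gamma > c$. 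In particular $\delta > 0$, so $\mathscr L = I - D^{-1/2}AD^{-1/2}$ is defined and shares its spectrum with $\Lap$. Writing $B := D^{-1/2}AD^{-1/2}$, I would split $B = D^{-1/2}\bar A D^{-1/2} + D^{-1/2}(A - \bar A)D^{-1/2}$; the second summand has norm at most $\|A - \bar A\|/\delta = O(1/\sqrt d)$, while the first is a rank-one matrix $p\,vv^{T}$ (with $v = D^{-1/2}\mathbf 1$) up to a term of norm $O(1/n)$. Since $B$ has the exact eigenvalue $1$ (with eigenvector $D^{1/2}\mathbf 1$, which carries the trivial eigenvalue of $\mathscr L$) and the rank-one part contributes nothing to $\mu_2, \ldots, \mu_n$, Weyl's inequalities confine $\mu_2(B), \ldots, \mu_n(B)$ to $[-O(1/\sqrt d), O(1/\sqrt d)]$, whence $\lambda_2(\Lap), \ldots, \lambda_n(\Lap) \in [1 - C/\sqrt d, 1 + C/\sqrt d]$.

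The main obstacle is the spectral-norm bound $\|A - \bar A\| = O(\sqrt d)$ down to $p = \Theta(\log n/n)$. In the dense regime this is classical, e.g.\ via the trace/moment method of F\"uredi--Koml\'os, but near the connectivity threshold the pure moment method fails: atypically high-degree vertices inflate the high moments and push the naive bound above $\sqrt d$. I would instead follow Feige--Ofek, bounding $\|A - \bar A\| = \max_{x,y} x^{T}(A - \bar A)y$ over a fixed $\varepsilon$-net of the unit sphere and, for each pair $(x,y)$, splitting the bilinear form into a contribution from \emph{light} index pairs (where $|x_i y_j|$ is small), controlled by a Bernstein-type tail bound, and a contribution from \emph{heavy} pairs, controlled by a combinatorial discrepancy estimate on the number of edges spanned between vertex sets. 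The delicate point is this discrepancy estimate, which requires $d = \Omega(\log n)$ so that degrees are sufficiently well concentrated; this, rather than any minimum-degree condition, is what the adjacency part needs, which is why (i) survives already at $p \ge \gamma\log n/n$ (isolated vertices only contribute zero eigenvalues and do not affect $\mu(G)$). Finally, to upgrade the failure probability to the prescribed $n^{-c}$ I would track the constants in the Chernoff and net bounds and enlarge $C$ accordingly.
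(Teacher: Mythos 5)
Your proposal is correct, but note that the paper itself contains no proof of Theorem~\ref{thm:concentration-graph-eigenvalues}: it is a survey statement cited to \cite{CojaOghlan:2007gj,Feige:2005hp,Hoffman:2012}, and the only ``proof'' the paper supplies is the glue sketched in Section~\ref{subsec:EigenvaluesAdjacencyRandomGraphs} --- the restricted bilinear estimate \eqref{preciseStatementAdjacencyGraphs} of Hoffman--Kahle--Paquette for unit vectors $x\perp\1$, a Chernoff bound for $\tfrac{1}{n}\langle A\1,\1\rangle$, and the Feige--Ofek decomposition lemma (reproduced in generalized form as Lemma~\ref{LEMMAConditions}) to convert these into eigenvalue intervals, with part (ii) cited outright to Coja-Oghlan and Hoffman--Kahle--Paquette. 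Your route reconstructs the same underlying technology but packages it differently: you bound the full centered norm $\|A-\E[A]\|$ and invoke Weyl for (i), and for (ii) you combine that norm bound with min-degree concentration and a perturbation of $D^{-1/2}AD^{-1/2}$, which is essentially the derivation of \cite{CojaOghlan:2007gj}. Your reduction steps all check out: the rank-one identification of $D^{-1/2}\E[A]D^{-1/2}$ up to an $O(1/n)$ error, the bound $\|D^{-1/2}(A-\E[A])D^{-1/2}\|\le\|A-\E[A]\|/\delta=O(1/\sqrt{d})$, and your diagnosis that $\gamma>c$ is exactly what makes $\min_v\deg(v)\ge\alpha d$ hold with probability $1-n^{-c}$ are all accurate, as is your identification of the hard core (the Kahn--Szemer\'edi/Feige--Ofek net, light/heavy-pair and discrepancy argument, which you defer to the literature just as the paper does) and of why the trace method fails at $p=\Theta(\log n/n)$. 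As for what each formulation buys: your full-norm version is the one adopted in later literature and yields both parts from a single estimate, but it is not actually cheaper --- controlling $A-\E[A]$ in the $\1$ direction amounts to the same $\ell_2$ degree concentration $\sum_v(\deg(v)-d)^2=O(nd)$ that the decomposition route needs for its condition on $\langle Au,w\rangle$; conversely, the paper's reason for phrasing the input via the three restricted conditions is structural rather than aesthetic: Lemma~\ref{LEMMAConditions} is precisely the statement that generalizes to dimension $k$ (Theorem~\ref{GarlandAdjacency}), where the ``trivial'' direction is the $\binom{n-1}{k-1}$-dimensional space $B^{k-1}(X)$ and no full-norm analogue is available, so the graph-case result is quoted in the form that the higher-dimensional argument can consume.
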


One type of application of such results 
is the analysis of spectral heuristics for algorithms that deal with random instances of NP-hard graph partitioning and related problems, see the discussions in \cite{Feige:2005hp,CojaOghlan:2007gj}.
%

\subsubsection*{Higher-Dimensional Laplacians} 
Eckmann~\cite{Eckmann:HarmonischeFunktionenRandwertaufgabenKomplex-1945} introduced a generalization of the graph Laplacian $L$ to higher-dimensional simplicial 
complexes $X$ to study discrete boundary value problems on such complexes. 

More precisely, let $X$ be a finite simplicial complex and let $C^i(X;\R)$, $i\in \Z$, be the vector space of $i$-dimensional simplicial cochains with real coefficients (we refer to Section~\ref{sec:preliminaries} for the necessary definitions). Eckmann defines three linear operators $L_i^\down(X)$, $L_i^\up(X)$ and $L_i(X)=L_i^\down(X)+L_i^\up(X)$ on the space $C^i(X;\R)$ and proves a discrete analogue of \emph{Hodge theory} \cite{Hodge:TheoryApplicationsHarmonicIntegrals-1989}, which implies, in particular, that the subspace $\mathcal{H}_i(X):=\ker L_i(X)$ of so-called \emph{harmonic cochains} on $X$ is isomorphic to $\widetilde{H}^i(X;\R)$, the $i$-th reduced cohomology.

In the case of a $1$-dimensional simplicial complex (graph) $G$, $L_0^\up(G)$ coincides with the usual graph Laplacian  $L(G)$ discussed previously.

Subsequently, combinatorial Laplacians were applied in a variety of contexts. Dodziuk~\cite{Dodziuk:FiniteDifferenceApproachHodgeTheoryHarmonicForms-1976} and Dodziuk and Patodi~\cite{DodziukPatodi:RiemannianStructuresTriangulationsManifolds-1976} showed how the continuous Laplacian of a Riemannian manifold can be approximated by the combinatorial Laplacians of a suitable sequence of successively finer triangulations of the manifold. 

Kalai \cite{Kalai} used combinatorial Laplacians to prove a higher-dimensional generalization of Cayley's formula for the number of labeled trees, and further results in this direction, including a generalization of the Matrix-Tree Theorem, were obtained in \cite{Adin:1992dx,Duval:2009jo}. For further combinatorial applications, see, e.g., \cite{FriedmanHanlon:BettiNumbersChessboardComplexes-1998,Friedman:ComputingBettiNumbersViaCombinatorialLaplacians-1998,MR1697094,MR1912799}. For 
further background and references regarding combinatorial Laplacians, see also~\cite{HorakJost}.

We will mostly work with a normalized version of the Laplacian, $\Lap_i(X)=\Lap_i^\down(X)+\Lap_i^\up(X)$ (see Section~\ref{sec:preliminaries} for the definition) and focus on the operator $\Lap_{k-1}^\up(X)$.
Again, for graphs, $\Lap_0^\up(G)$ agrees with the normalized graph Laplacian $\Lap(G)$ discussed above.

\subsubsection*{Random Complexes}
Linial and Meshulam~\cite{LinialMeshulam:HomologicalConnectivityRandom2Complexes-2006} introduced a higher-dimensional analogue of the binomial 
random graph model $G(n,p)$. By definition, the random $k$-dimensional complex $X^k(n,p)$ has $n$ vertices, a \emph{complete $(k-1)$-skeleton} (i.e., every subset of $k$ of fewer vertices form a face of the complex), and every $(k+1)$-element set of vertices is taken as a $k$-face independently with probability $p$, which may be constant or, more generally, a function $p(n)$ depending on $n$. 

This model has been studied extensively, and \emph{threshold probabilities} for several basic topological properties of $X^k(n,p)$ have been determined quite precisely, see e.g.~\cite{MeshulamWallach:HomologicalConnectivityRandomComplexes-2009,BabsonHoffmanKahle:SimpleConnectivityRandom2Complexes, Aronshtam:CollapsibilityVanishingTopHomologyRandomComplexes-2013,CohenCostaFarberKappeler:2012,Kozlov:2009p2037, Wagner:MinorsRandomExpandingHypergraphs-2011}. 

Our first result is a higher-dimensional analogue of Theorem~\ref{thm:concentration-graph-eigenvalues}.
The adjacency matrix of a $k$-dimensional complex $X$ is denoted by $A_{k-1}$ (see Section~\ref{sec:matrices-complexes} for the precise definition).
Both $A_{k-1}$ and the normalized up-Laplacian $\Lap_{k-1}^\up$ have rows and columns indexed by the $(k-1)$-faces of $X$; we assume that $X$ has $n$ vertices and a complete $(k-1)$-skeleton, so the matrices have dimension $\binom{n}{k}\times\binom{n}{k}$.
$A_{k-1}$ has entries in $\{0,\pm 1\}$, and $(A_{k-1})_{F,G}=\pm 1$ (with appropriate signs) iff $F\cup G$ is a $k$-face of $X$.

\begin{theorem}\label{EigenvaluesRandomComplexes}\xdef\savedtheoremnumber{\thetheorem}
Let $k\geq2$. For every $c>0$ and every $\gamma > c$ there exists a constant $C>0$ with the following property:
Assume  $p \geq (k+\gamma)\log(n)/n$ and let\footnote{Thus, $d$ is the expected \emph{degree} of any $(k-1)$-face $F$ in $X^k(n,p)$, i.e., the expected number of $k$-faces incident to $F$.} $d:= p(n-k)$. 
Then for $\gamma_A=C\cdot\sqrt{d}$ and $\gamma_\Delta=C/\sqrt{d}$ the following statements hold with probability at least $1-n^{-c}$:

\begin{enumerate}
\item[\textup{(i)}] The largest $\binom{n-1}{k-1}$ eigenvalues of $A_{k-1}(X^k(n,p))$ lie in the interval $[d-\gamma_A,d+\gamma_A]$, and the remaining $\binom{n-1}{k}$ eigenvalues lie in the interval $[-\gamma_A,+\gamma_A]$.
\item[\textup{(ii)}] The smallest $\binom{n-1}{k-1}$ eigenvalues of $\Lap_{k-1}^\up(X^k(n,p))$ are \textup{(}trivially\textup{)} zero, and the remaining $\binom{n-1}{k}$ eigenvalues lie in the interval $[1-\gamma_\Delta,1+\gamma_\Delta]$. In particular, $\tilde{H}^{k-1}(X^k(n,p);\R)=0$.
\end{enumerate} 
For the adjacency matrix \textup{(i)} even holds for $p\geq \gamma\cdot \log n/n$.
\end{theorem}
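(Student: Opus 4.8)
The plan is to reduce to the graph case by the method of Garland, using the fact that in $X^k(n,p)$ the \emph{links} of $(k-2)$-faces are themselves binomial random graphs. Fix a $(k-2)$-face $\sigma$, a set of $k-1$ vertices. Since the $(k-1)$-skeleton is complete, every vertex $v\notin\sigma$ is a vertex of $\lk(\sigma)$, and $\{u,v\}$ is an edge of $\lk(\sigma)$ precisely when $\sigma\cup\{u,v\}$ is a $k$-face, which happens independently with probability $p$; hence $\lk(\sigma)=G(n-k+1,p)$, with expected degree $p(n-k)=d$. First I would apply Theorem~\ref{thm:concentration-graph-eigenvalues} to all $\binom{n}{k-1}$ links at once by a union bound: taking $\gamma$ large enough relative to $c$ makes the per-link failure probability $o\!\left(n^{-(k-1)-c}\right)$, so that with probability at least $1-n^{-c}$ \emph{every} link has its nontrivial normalized-Laplacian eigenvalues in $[1-C'/\sqrt d,\,1+C'/\sqrt d]$ and its adjacency spectrum split as in part~(i). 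Paying for the $\binom{n}{k-1}\approx n^{k-1}$ links is exactly what forces the threshold $p\geq (k+\gamma)\log n/n$; the sharper graph threshold for the adjacency matrix propagates to give $p\geq\gamma\log n/n$ in that case.

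For part~(ii) I would invoke Garland's local-to-global identity. For $f\in C^{k-1}(X;\R)$, write $f_\sigma\in C^0(\lk\sigma;\R)$ for the localization $v\mapsto f(\sigma\cup v)$; then $\langle\Lap_{k-1}^\up f,f\rangle$ equals $\sum_\sigma\langle\Lap_0^\up(\lk\sigma)f_\sigma,f_\sigma\rangle_{\lk\sigma}$ up to a correction that, on the orthogonal complement of the coboundaries $B^{k-1}=\im\delta_{k-2}$, reduces to the scalar $-(k-1)\langle f,f\rangle$ (the down-Laplacian term vanishes there because $f\perp B^{k-1}$ means $f$ averages to zero on every link). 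Since each $f_\sigma$ is then orthogonal to the constants, feeding in the uniform link bound $\langle\Lap_0^\up(\lk\sigma)f_\sigma,f_\sigma\rangle\in[(1-C'/\sqrt d)\|f_\sigma\|^2,(1+C'/\sqrt d)\|f_\sigma\|^2]$ together with $\sum_\sigma\|f_\sigma\|^2=k\|f\|^2$ yields nontrivial eigenvalues in $[1-kC'/\sqrt d,\,1+kC'/\sqrt d]$; absorbing $kC'$ into $C$ gives the interval $[1-\gamma_\Delta,1+\gamma_\Delta]$. Two bookkeeping facts finish: $B^{k-1}\subseteq\ker\Lap_{k-1}^\up$ always, and because the skeleton is complete it has the fixed dimension $\operatorname{rank}\delta_{k-2}=\binom{n-1}{k-1}$; and the strictly positive lower bound $1-\gamma_\Delta>0$ leaves no further kernel, so $\tilde H^{k-1}(X;\R)=0$ and the remaining $\binom{n}{k}-\binom{n-1}{k-1}=\binom{n-1}{k}$ eigenvalues are the nontrivial ones.

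For part~(i) I would develop and use the adjacency analogue, the genuinely new ingredient. The exact relation $A_{k-1}=D_{k-1}-L_{k-1}^\up$ (the higher-dimensional $A=D-L$), i.e.\ $D_{k-1}^{-1}A_{k-1}=I-\Lap_{k-1}^\up$, identifies the coboundary subspace $B^{k-1}$, where $\Lap_{k-1}^\up$ vanishes, as the ``near-$d$'' eigenspace and its complement, where $\Lap_{k-1}^\up\approx I$, as the ``near-$0$'' eigenspace; this already predicts the multiplicities $\binom{n-1}{k-1}$ and $\binom{n-1}{k}$. The catch is that $D_{k-1}$ and $\Lap_{k-1}^\up$ do not commute, so the eigenvalues of $A_{k-1}$ cannot simply be read off from part~(ii). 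I would resolve this either by combining part~(ii) with the concentration of the face-degrees $\deg(F)\sim\mathrm{Bin}(n-k,p)$ about $d$ (fluctuations of order $\sqrt d$) to control $A_{k-1}=D_{k-1}(I-\Lap_{k-1}^\up)$, or, preferably, by running a Garland-type decomposition of the quadratic form $\langle A_{k-1}f,f\rangle$ directly into link adjacency forms $\langle A(\lk\sigma)f_\sigma,f_\sigma\rangle$ and transferring the two-value concentration of Theorem~\ref{thm:concentration-graph-eigenvalues}(i); the latter route also yields the improved threshold $p\geq\gamma\log n/n$.

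The main obstacle is making the Garland transfer quantitatively correct on both sides: establishing the local-to-global identities with their exact combinatorial normalizations, and tracking how the link error $C'/\sqrt d$ propagates (it is amplified by a factor of order $k$ when passing from links to the global operator, which is harmless but must be folded cleanly into $C$). For the adjacency matrix there is the extra difficulty of the non-commutativity of $D_{k-1}$ and $\Lap_{k-1}^\up$ --- equivalently, the cross terms mixing the up- and down-structure --- which is precisely why part~(i) needs its own argument rather than following formally from part~(ii).
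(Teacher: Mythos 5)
Your overall architecture is the paper's own: the links $\lk(\sigma,X^k(n,p))$ of $(k-2)$-faces are distributed as $G(n-k+1,p)$, a union bound over the $\binom{n}{k-1}\approx n^{k-1}$ links accounts for the threshold $(k+\gamma)\log n/n$, and part (ii) follows from Garland's estimate exactly as you describe --- your localization identity, the correction $-(k-1)\langle f,f\rangle$, the fact that $f\perp B^{k-1}$ forces $f_\sigma\perp\1$ in each link, and $\sum_\sigma\|f_\sigma\|^2=k\|f\|^2$ are precisely Theorem~\ref{Garland} and Lemma~\ref{GarlandLemma} (one should also note that $X^k(n,p)$ is a.a.s.\ pure, so that the normalized Laplacian is defined). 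Part (ii) of your sketch is therefore essentially complete.

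The genuine gap is in part (i), on the coboundary block $B^{k-1}(X)$, and neither of your two routes closes it as stated. Route (a): controlling $D_{k-1}-dI$ as an operator requires bounding $\max_F|\deg(F)-d|$ over all $\binom{n}{k}$ faces, and the union bound forces deviations of order $\sqrt{d\log n}$, not $O(\sqrt{d})$; this recovers only the weaker bound $\gamma_A=O(\sqrt{d\log n})$ of the extended abstract, not the theorem as stated. Route (b): the decomposition $\langle A_{k-1}b,b\rangle=\sum_\sigma\langle A(\lk\sigma)b_\sigma,b_\sigma\rangle$ is valid, but ``transferring the two-value concentration'' is not automatic, because for $b\in B^{k-1}$ the localizations $b_\sigma$ are in general \emph{not} constant (the basis vectors $b=\delta_{k-2}e_F$ are already counterexamples, as the paper points out). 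The graph inputs (Theorem~\ref{thm:concentration-graph-eigenvalues} together with \eqref{preciseStatementAdjacencyGraphs}) only yield $\langle A(\lk\sigma)b_\sigma,b_\sigma\rangle = d\,\langle b_\sigma,u\rangle^2\pm O(\sqrt{d})\,\|b_\sigma\|^2$ with $u=\1/\sqrt{n-k+1}$, and to sum this to $d\|b\|^2\pm O(k\sqrt{d})\|b\|^2$ one needs the further fact that the localizations of a coboundary are, in aggregate, aligned with $\1$: $\sum_\sigma\langle b_\sigma,\1\rangle^2=n\,\|b\|^2$. This identity rests on the reconstruction formula $b(H)=\frac{1}{n}\sum_{\sigma\subset H}[H:\sigma]\,h_b(\sigma)$, valid for coboundaries over a complete skeleton, and is in essence the paper's Lemma~\ref{ReducingToLinks} and Proposition~\ref{Prop:ReducingToLinks} --- the genuinely new ingredient that your sketch leaves unproved. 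A secondary omission: quadratic-form bounds on $B^{k-1}$ and on its orthogonal complement alone do not localize the spectrum into the two stated intervals; without a bound on the mixed terms $|\langle A_{k-1}b,z\rangle|$ (the Feige--Ofek-type Lemma~\ref{LEMMAConditions}), Courant--Fischer bounds the top $\binom{n-1}{k-1}$ eigenvalues only from below and the remaining $\binom{n-1}{k}$ only from above.
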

%

Both concentration results are achieved by reducing the higher-dimensional problem to estimates for the eigenvalues of random graphs, i.e., to Theorem~\ref{thm:concentration-graph-eigenvalues}. For the Normalized Laplacian this is done by applying a fundamental estimate due to Garland~\cite{Garland} 
that relates the eigenvalues of the higher-dimensional matrix to those of the graphs that arise as links of $(k-2)$-dimensional faces. For the generalized adjacency matrix we develop an analogous result (see Section~\ref{sec:Garland}).

Compared to the extended abstract \cite{GundertWagner-2012} of this paper, Theorem~\ref{EigenvaluesRandomComplexes} contains an improved concentration for the eigenvalues of $A_{k-1}$ in intervals of width $O(\sqrt{d})$ around the typical eigenvalues, as opposed to $O(\sqrt{d\log n})$.

Theorem~\ref{EigenvaluesRandomComplexes} also applies to any other random model for simplicial complexes with $n$ vertices and complete $(k-1)$-skeleton in which the links of $(k-2)$-faces are random graphs with distribution $G(n-k+1,p)$. 
We use this for our second result, a probabilistic construction of a counterexample for a conjectural higher-dimensional discrete Cheeger inequality (Theorem~\ref{thm:counterexample} below).

\subsubsection*{Edge Expansion and the Cheeger Inequality for Graphs}
For a graph of arbitrary density, its \emph{edge expansion} can be defined as follows. Let $\varepsilon>0$ be a parameter. We say that $G=(V,E)$ is $\varepsilon$-edge expanding if for every $S\subseteq V$,
\begin{equation}
\label{eq:edge-expansion}
\frac{|E(S,V\setminus S)|}{|E|} \geq \varepsilon \cdot \frac{\min\{|S|,|V\setminus S|\}}{|V|},
\end{equation}
where $E(S,V\setminus S)=\{\{u,v\}\in E: u\in S,v\in V\setminus S\}$ is the set of edges across the cut $(S,V\setminus S)$. Moreover, we call the best possible constant $\varepsilon$ the \emph{edge expansion}  of $G$ and denote it by $\varepsilon(G)$.\footnote{Note that (\ref{eq:edge-expansion}) is equivalent, to the more common condition that $|E(S,V\setminus S)| \geq\frac{\varepsilon}{2} \cdot d\cdot |S|$ for all $S\subseteq V$ with $|S|\leq |V|/2$, where $d=2|E|/|V|$ is the average degree. Thus, $\varepsilon(G)=2h(G)$, where $h(G):=\min\{\frac{|E(S,V\setminus S)|}{d|S|}:S\subseteq V,|S|\leq |V|/2\}$ is the (normalized) \emph{Cheeger constant} 
of $G$.} 
For a survey of the numerous applications of graph expansion in theoretical computer science and connections to other branches of mathematics, we refer to \cite{HooryLinialWigderson:ExpanderGraphs-2006}. 

As mentioned above, the edge expansion of a graph is controlled by the second-smallest eigenvalue of its Laplacian. Here, we state this fact in its simplest form, for $d$-regular graphs (due to Dodziuk \cite{Dodziuk:DifferenceEquations-1984}, Alon and Milman \cite{Alon:1985jg,Alon:1986wi}; Cheeger~\cite{Cheeger:LowerBoundSmallestEigenvalueLaplacian-1970} proved an analogous result for Laplacians on Riemannian manifolds.). A version for non-regular graphs, with a slightly different notion of edge expansion, can be found, e.g., in \cite{Chung:SpectralGraphTheory-1997}.
\begin{theorem}[Discrete Cheeger Inequality]\label{cheeger}\hspace{0.05cm}
Let $G=(V,E)$ be a $d$-regular graph, and let $\lambda_2=\lambda_2(\Lap(G))$ be the second-smallest eigenvalue of its normalized Laplacian. 
Then the edge expansion $\varepsilon(G)$ satisfies
$$\lambda_2 \leq \varepsilon(G) \leq \sqrt{8\lambda_2}.$$
\end{theorem}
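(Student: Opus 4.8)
The plan is to exploit the fact that for a $d$-regular graph the normalized Laplacian is simply $\Lap=\frac1d L$, so its eigenvalues are those of the combinatorial Laplacian scaled by $1/d$, and the weighted inner product of Section~\ref{sec:preliminaries} reduces (up to the scalar $d$) to the standard one. Consequently $\lambda_2$ admits the Rayleigh-quotient characterization
$$\lambda_2 = \min_{\substack{x\in\R^V\\ x\perp \1}} \frac{1}{d}\cdot\frac{\sum_{\{u,v\}\in E}(x_u-x_v)^2}{\sum_{v\in V}x_v^2},$$
and I would use this variational formula as the common engine for both inequalities. Throughout, write $n=|V|$, recall $|E|=dn/2$, and note that by the footnote it suffices to prove $h(G)\le\sqrt{2\lambda_2}$ for the upper bound, since $\varepsilon(G)=2h(G)$.

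For the easy inequality $\lambda_2\le\varepsilon(G)$, I would take a set $S$ with $|S|\le n/2$ attaining $h(G)$ and feed the balanced indicator $x=\1_S-\frac{|S|}{n}\1$ (which lies in $\1^\perp$) into the Rayleigh quotient. A direct computation gives $\sum_{\{u,v\}\in E}(x_u-x_v)^2=|E(S,V\setminus S)|$ and $\sum_v x_v^2=|S|(1-|S|/n)\ge|S|/2$, whence $\lambda_2\le\frac{2|E(S,V\setminus S)|}{d|S|}=2h(G)=\varepsilon(G)$. This direction is entirely routine.

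The hard inequality $\varepsilon(G)\le\sqrt{8\lambda_2}$ is where the real work lies, and I would prove it by a Cheeger-style rounding of a $\lambda_2$-eigenvector $f$ into a cut, in two stages. First, shifting $f$ by an appropriate median value $c$ produces $g=f-c\1$ whose positive and negative supports each contain at most $n/2$ vertices; since $f\perp\1$, the shift only increases the denominator, so the Rayleigh quotient of $g$ stays $\le d\lambda_2$. Splitting $g=g_+-g_-$ into its positive and negative parts and using the edge-wise inequality $(g_u-g_v)^2\ge((g_+)_u-(g_+)_v)^2+((g_-)_u-(g_-)_v)^2$ together with the mediant inequality, one of the two parts --- call it $h\ge 0$, supported on at most $n/2$ vertices --- inherits the bound $\frac{\sum_{\{u,v\}\in E}(h_u-h_v)^2}{\sum_v h_v^2}\le d\lambda_2$. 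Second, I would round $h$ to a level set: sampling a threshold $t$ uniformly in $[0,\max_v h_v^2]$ and setting $S=\{v:h_v^2>t\}\subseteq\supp(h)$, a standard first-moment computation yields
$$\frac{\E|E(S,V\setminus S)|}{\E|S|}=\frac{\sum_{\{u,v\}\in E}|h_u^2-h_v^2|}{\sum_v h_v^2}.$$
Factoring $|h_u^2-h_v^2|=|h_u-h_v|\,(h_u+h_v)$, applying Cauchy--Schwarz, bounding $(h_u+h_v)^2\le 2(h_u^2+h_v^2)$, and using $d$-regularity to get $\sum_{\{u,v\}\in E}(h_u+h_v)^2\le 2d\sum_v h_v^2$, the right-hand side is at most $\sqrt{2d}\cdot\sqrt{d\lambda_2}=d\sqrt{2\lambda_2}$. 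Hence some threshold yields $S$ with $|S|\le n/2$ and $\frac{|E(S,V\setminus S)|}{d|S|}\le\sqrt{2\lambda_2}$, giving $h(G)\le\sqrt{2\lambda_2}$ and therefore $\varepsilon(G)=2h(G)\le\sqrt{8\lambda_2}$.

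I expect the main obstacle to be the hard direction: correctly arranging the median shift and the positive-part reduction so that the support bound $|S|\le n/2$ survives, and, in the rounding step, squeezing out exactly the constant $8$ through the Cauchy--Schwarz estimate rather than a weaker constant. The easy direction and the reduction $\Lap=\frac1d L$ are immediate by comparison.
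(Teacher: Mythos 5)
Your proof is correct, but note that the paper itself does not actually prove Theorem~\ref{cheeger}: it is quoted as a classical result (Dodziuk, Alon--Milman), with only a one-sentence indication for the left-hand inequality (``express $\1_S$ as a linear combination of eigenvectors of $\Lap$'') and a citation to Alon--Schwartz--Shapira for the right-hand one. So there is no step-by-step argument in the paper to compare against; you have supplied the complete proof the paper delegates to the literature. Your easy direction is the same variational computation the paper hints at, organized slightly differently: rather than expanding $\1_S$ in the eigenbasis and discarding the $\1$-component, you project it out explicitly by passing to $x=\1_S-\tfrac{|S|}{n}\1$ and invoke the Rayleigh quotient; the computation $\sum_v x_v^2=|S|(1-|S|/n)\geq |S|/2$ and numerator $=|E(S,V\setminus S)|$ is exactly right and gives $\lambda_2\leq 2h(G)=\varepsilon(G)$. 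Your hard direction --- median shift of the eigenvector, passing to the positive/negative part of smaller support via the edgewise inequality $(g_u-g_v)^2\geq((g_+)_u-(g_+)_v)^2+((g_-)_u-(g_-)_v)^2$ and the mediant inequality, then probabilistic threshold rounding of $h^2$ with Cauchy--Schwarz and $d$-regularity --- is the standard Cheeger rounding argument and delivers exactly $h(G)\leq\sqrt{2\lambda_2}$, hence $\varepsilon(G)=2h(G)\leq\sqrt{8\lambda_2}$, matching the stated constant. Two routine points deserve a sentence in a polished write-up: one of $g_+,g_-$ may vanish identically (in which case the surviving part inherits the Rayleigh bound directly), and in the rounding step the averaging argument should be phrased so that the produced set $S$ is nonempty (e.g.\ observe that $\E[|E(S,V\setminus S)|-d\sqrt{2\lambda_2}\,|S|]\leq 0$ while $S_t\neq\emptyset$ for all $t$ below the maximum, so some nonempty level set works); neither affects correctness.
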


The inequality on the left-hand side is proved fairly easily by expressing the characteristic function $\1_S\in \R^V$ of a subset $S\subseteq V$ as a linear combination of eigenvectors of the Laplacian $\Lap$. We will refer to this as ``\emph{the easy part of the Cheeger inequality}.''
The harder part is the inequality on the right-hand side. For a short proof see, e.g., \cite{AlonSchwartzShapira:ElementaryConstructionExpanders-2008}.

We remark that even the easy part of the Cheeger inequality is very useful. For instance, essentially all explicit constructions of constant-degree expanders \cite{Margulis:ExplicitConstructionsExpanders-1973,GabberGalil:ExplicitConstructionsSuperconcentrators-1981,LubotzkyPhillipsSarnak:RamanujanGraphs-1988,Margulis:ExplicitGroupTheoreticConstructions-1988,ReingoldVadhanWigderson:ZigZagProduct-2002}
%
prove a lower bound on the edge expansion of the constructed graphs by analyzing their eigenvalues.

\subsubsection*{Higher-Dimensional Expansion
} Recently, a higher-dimensional analogue of edge-expansion of graphs, \emph{coboundary expansion} (more precisely, $\Z_2$-coboundary expansion), arose in the recent work of Gromov~\cite{Gromov:SingularitiesExpanders2-2010} and of Linial, Meshulam and Wallach~\cite{LinialMeshulam:HomologicalConnectivityRandom2Complexes-2006,MeshulamWallach:HomologicalConnectivityRandomComplexes-2009}. 
The precise definition will be given in Section~\ref{sec:preliminaries}. (For further related results, see, also \cite{Fox:2010uq,Karasev:Gromov-2010,Newman:2011,MatousekWagner:AlsoSprachGromov-2011,Dotterrer:2010fk}.) 

It is natural to ask whether there is a higher-dimensional analogue of the discrete Cheeger inequality; this question was raised explicitly, e.g., by Dotterrer and Kahle~\cite{Dotterrer:2010fk}. As our second result we show, by a simple probabilistic construction, that the most straightforward attempt at a higher-dimensional Cheeger inequality fails, even for the ``easy part''. In higher dimensions, \emph{spectral expansion} (an eigenvalue gap for the Laplacian) does not imply $\Z_2$-coboundary expansion:

\begin{theorem}\label{thm:counterexample}
For every $k>1$ there is an infinite family of $k$-dimensional complexes $(Y^k_n)_{n \in \N}$, where $Y^k_n$ has $n$ vertices, that is \emph{spectrally but not coboundary expanding} in dimension $k$.

More precisely, all nontrivial eigenvalues of $\Lap_{k-1}^\up(Y^k_n)$ are $1\pm O(1/\sqrt{n})$,
but every $Y_n$ contains a cochain $a \in C^{k-1}(Y_n;\Z_2)$ of normalized Hamming weight $\|[a]\| \geq \frac{1}{2}-o(1)$ with $\|\delta a\|=O(\log n/n)$. Furthermore, $Y_n$ can be chosen such that $H_i(Y_n;\Z) = 0$ for all $i\leq k-1$.
\end{theorem}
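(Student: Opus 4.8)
The plan is to \emph{plant} a cochain of large normalized weight but small coboundary into an otherwise random complex, using the remark that Theorem~\ref{EigenvaluesRandomComplexes} applies to any model whose $(k-2)$-face links are distributed as $G(n-k+1,p)$. First I fix, once and for all, a cochain $a\in C^{k-1}(Y_n;\Z_2)$ chosen uniformly at random on the complete $(k-1)$-skeleton. Since the coboundaries $\im(\delta\colon C^{k-2}\to C^{k-1})$ form a \emph{fixed} linear subspace of $\Z_2^{\binom{n}{k}}$ of dimension $\binom{n-1}{k-1}=o\!\left(\binom{n}{k}\right)$---independent of which $k$-faces are chosen later---a Chernoff bound for the distance of a uniform random point to a fixed coset, together with a union bound over the $2^{\binom{n-1}{k-1}}$ coboundaries, gives $\|[a]\|=\min_b\|a+\delta b\|\ge\frac12-o(1)$ with high probability. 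In particular $a\notin\im\delta$, which already yields the large-weight half of the statement.

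Next I construct the $k$-faces of $Y_n$ around $a$. Call a $(k+1)$-set $\sigma$ \emph{frustrated} if $\delta a(\sigma)\neq0$, and take as $k$-faces \emph{all} non-frustrated $(k+1)$-sets together with \emph{all} $(k+1)$-sets containing a fixed vertex $v_0$. The key computation concerns the link of a $(k-2)$-face $\rho$: writing $\delta a(\rho\cup\{u,w\})$ as a sum of $a$-values and conditioning on those of the form $a(\rho\cup\{u\})$, the remaining term is, across distinct pairs $\{u,w\}$, a sum of $a$-values supported on disjoint coordinate sets, hence independent fair bits. Consequently, for $\rho\not\ni v_0$ the link is a copy of $G(n-k,1/2)$ on the vertices other than $v_0$, with $v_0$ adjacent to all of them, while for $\rho\ni v_0$ the link is the complete graph. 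Because only the frustrated faces through $v_0$ contribute to $\delta a$, and there are at most $\binom{n-1}{k}=O(\log n/n)\cdot\binom{n}{k+1}$ of them, we obtain $\|\delta a\|=O(\log n/n)$, the small-coboundary half of the statement.

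To secure $H_i(Y_n;\Z)=0$ for all $i\le k-1$, note that the complete $(k-1)$-skeleton already forces vanishing below dimension $k-1$, so only $H_{k-1}$ must be killed over $\Z$ (equivalently over $\Q$ and every $\Z_q$). This is exactly what the $v_0$-faces buy: since every $k$-set containing $v_0$ is present, the cone $c\mapsto v_0*c$ maps any $(k-1)$-cycle to a $k$-chain of $Y_n$ bounding it, so $H_{k-1}(Y_n;\Z)=0$. Thus the same family of complexes realizes all three requirements.

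The main obstacle is the spectral estimate, because the homology-killing faces pull the link distributions away from the clean $G(n-k+1,p)$ model to which Theorem~\ref{EigenvaluesRandomComplexes} directly applies. I would feed the links into the Garland-type argument of Section~\ref{sec:Garland}, which reduces the eigenvalues of $\Lap_{k-1}^\up(Y_n)$ to the normalized adjacency eigenvalues of the links. For the complete links this is immediate, and for the ``$G(n-k,1/2)$ with one universal vertex'' links it should follow from the fact that adjoining a universal vertex is a join with a point, which only moves the normalized Laplacian spectrum toward $1$; quantifying this so as to keep all nontrivial eigenvalues in $1\pm O(1/\sqrt n)$---uniformly over the $\binom{n}{k-1}$ faces $\rho$ and simultaneously for the single random $a$ used throughout, via Theorem~\ref{thm:concentration-graph-eigenvalues} and a union bound---is the delicate part. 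The remaining pieces (the two Chernoff/union-bound estimates, the disjoint-support link computation, and the face count for $\|\delta a\|$) are routine.
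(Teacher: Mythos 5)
Your planting of the random cochain $a$, the definition of the ``good'' (non-frustrated) $k$-faces, and the Chernoff-plus-union-bound argument for $\|[a]\|\geq\frac12-o(1)$ all coincide with the paper's construction (the paper takes each good face with probability $p$ rather than all of them, but that is inessential). The genuine divergence is in how you kill $H_{k-1}$: the paper sprinkles an independent copy of $X^k(n,q)$, $q\geq C\log n/n$, $q=o(p)$, on top of $Y^k(n,p)$, precisely so that every link of a $(k-2)$-face remains an honest binomial random graph $G(n-k+1,r)$ with $r=p/2+q-pq/2$ (Lemma~\ref{LinkIndependence} plus independence of the second round); the spectral half of the theorem is then an immediate application of Theorem~\ref{EigenvaluesRandomComplexes} via Remark~\ref{rem:ConditionsEigenvaluesLaplacians}, and the homology statement is quoted from the known vanishing threshold for $X^k(n,q)$. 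You instead cone off a fixed vertex $v_0$, which makes the homology argument clean and deterministic, but it destroys exactly the property the whole spectral machinery of the paper rests on: your links of faces $\rho\not\ni v_0$ are $G(n-k,1/2)$ joined with a universal vertex, not binomial random graphs, so neither Theorem~\ref{EigenvaluesRandomComplexes} nor Theorem~\ref{thm:concentration-graph-eigenvalues} applies to them off the shelf. This is where your proposal has a genuine gap, and you partly acknowledge it.

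The gap is not merely a missing computation: the principle you invoke to bridge it --- that joining a universal vertex ``only moves the normalized Laplacian spectrum toward $1$'' --- is not a valid tool. For $G=K_{m,m}$ the normalized Laplacian has the eigenvalue $2$, and the join $K_{m,m}\ast K_1$ still has the nontrivial eigenvalue $1+\frac{m}{m+1}=2-O(1/m)$ (check the vector that is $+1$ on one side, $-1$ on the other, $0$ at the apex); so a universal vertex does not, in general, force eigenvalues into any fixed neighborhood of $1$, let alone into $1\pm O(1/\sqrt n)$. What actually saves your links is not the join structure but the fact that the base graph $G(n-k,1/2)$ is already a spectral expander with degrees $\Theta(n)$: in the symmetric normalization $D^{-1/2}A D^{-1/2}$, the row and column of $v_0$ constitute a perturbation of spectral norm $O(1/\sqrt n)$ (about $n$ entries of size $\Theta(1/n)$), and the degree shift $d_i\mapsto d_i+1$ changes the remaining block by $O(1/n)$ in norm; Weyl's inequality then keeps all nontrivial link eigenvalues in $1\pm O(1/\sqrt n)$, uniformly over all $\binom{n}{k-1}$ links by a union bound, after which Theorem~\ref{Garland} finishes as in the paper. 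This perturbation argument (or an equivalent one) must be supplied for your proof to stand; the paper's randomized modification is designed precisely so that it never has to leave the $G(n-k+1,r)$ world. Two smaller points: your bound $\|\delta a\|=O(\log n/n)$ tacitly needs $f_k(Y_n)=\Omega\big(\tbinom{n}{k+1}\big)$, i.e.\ that a constant fraction of $(k+1)$-sets are good, which the paper establishes by a second-moment argument; and since $\binom{n-1}{k}=\frac{k+1}{n}\binom{n}{k+1}$, your construction in fact gives $\|\delta a\|=O(1/n)$, stronger than required.
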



For a graph $G$ and any abelian group $\G$, $\tilde{H}^{0}(G;\G)=0$ iff $G$ is connected. In higher dimensions, however, it is well-known that the vanishing of a cohomology group may depend on the choice of coefficients. A basic example for this is the real projective plane $\R P^2$ for which $\tilde{H}^1(\R P^2;\R) = 0$ but $\tilde{H}^1(\R P^2;\Z_2) = \Z_2$. In general, $\tilde{H}^1(Y;\G) = 0$ iff $Y$ is $\varepsilon$-expanding, with respect to a given norm on $\G$-cochains, for some small $\varepsilon>0$ that may depend on $Y$. Thus, the point of Theorem~\ref{thm:counterexample} is that there is an infinite family of examples whose coboundary expansion tends to zero (as fast as $\log n/n$) while the spectral expansion is bounded away from zero (in fact, equal to $1\pm O(1/\sqrt{n})$).

Compared to the extended abstract \cite{GundertWagner-2012} of this paper, the probabilistic construction behind Theorem~\ref{thm:counterexample} has been adapted to also allow for $H_{k-1}(Y_n;\Z)$ to be trivial. To influence the random behaviour we choose two probabilities $p,q \geq C\cdot \log(n)/n$ for suitably large $C$ with $q=o(p)$.
The construction then covers a whole range of parameters:
\[
|f_k(Y_n)-\tfrac{p}{2} \tbinom{n}{k+1}| \leq o(1) \tfrac{p}{2}\tbinom{n}{k+1},\quad \|\delta a\|=O\Big(\frac{q}{p}\Big),
\] while all nontrivial eigenvalues $\Lap_{k-1}^\up(Y_n)$ lie in the interval $[1-\gamma,1+\gamma]$ with $\gamma=O\big(1/\sqrt{(p/2)n}\big)$.

The concentration of eigenvalues is essentially optimal, as one can show\footnote{This can be shown analogously to the corresponding bound \eqref{BoundAdjacencySpectrumGraph} for graphs, see Preliminaries.} that  $\Lap_{k-1}^\up(X)$ always has a non-trivial eigenvalue $\lambda$ with $1-\lambda \geq \sqrt{k/d_{\max}\cdot(n-d_{\max})/(n-k)}$, where $d_{\max}$ is the maximal degree of a $k$-face in $X$, and the expected degree in $Y_n$ is $O((p/2)n)$.

In the extremal case $q = C\cdot \log(n)/n$ and $p=1$, we achieve a coboundary expansion of order $O(\log(n)/n)$ and eigenvalue concentration in $[1-O(1/\sqrt{n}),1+O(1/\sqrt{n})]$.

Of course it is just as natural to ask whether the other (``non-easy'') part of the Cheeger inequality has a simple higher-dimensional generalization. Even though any simplicial complex with non-zero $\Z_2$-coboundary expansion has to have non-zero spectral expansion, it has been shown that also for this part of the Cheeger inequality no straight-forward generalization can hold in higher dimensions: There is an infinite family of simplicial $k$-balls $X_n$ with spectral expansion $O(1/\log(n)^{\log(k)})$ and coboundary expansion $\Omega(1/\log(n))$, see~\cite{Steenbergen:2012}.
To the best of our knowledge, it is an open question whether there are complexes
with coboundary expansion bounded away from zero and spectral expansion tending to zero.

\subsubsection*{Related Work}
A recent article by Steenbergen, Klivans and Mukherjee~\cite{Steenbergen:2012} also presents a class of counterexamples for the most straightforward attempt at a higher-dimensional Cheeger inequality -- an explicit construction for an infinite family of simplicial $k$-balls $X_n$ whose spectral expansion is bounded away from zero, while the coboundary expansion tends to zero.
Here, the non-trivial eigenvalues of $\Lap_{k-1}^\up(X_n)$ are bounded below by a constant depending on the dimension $k$, while the coboundary expansion of $X_n$ is of order $1/\Theta(\log(n))$.
In the same article, the authors present the counterexample for simple higher-dimensional generalizations of the other (``non-easy'') part of the Cheeger inequality mentioned above.

Chung~\cite{Chung:1993} studies a higher Laplacian for hypergraphs that is closely related\footnote{One difference is that Chung's Laplacian operates not just on cochains, i.e., skew-symmetric functions on oriented simplices, but on arbitrary real-valued functions.} to the combinatorial Laplacian $L_{k-1}=L_{k-1}^\up+L_{k-1}^\down$. In \cite[Section~7]{Chung:1993}, she proves a somewhat weaker concentration result
for eigenvalues of random hypergraphs, namely, essentially, that for \emph{constant} $p$ and any $\varepsilon>0$, the eigenvalues of $L_{k-1}(X^k(n,p))$ are concentrated in an interval of width $O(n^{1/2+\varepsilon})$. She also states, without proof, that the proof methods for random graphs can be extended to yield the sharp bound of $O(\sqrt{pn})$. 

The probabilistic construction of the examples in Theorem~\ref{thm:counterexample} is well-known in the study of quasirandomness for hypergraphs, see, e.g., the discussion in \cite[Section~5]{Gowers:3UniformHypergraphs-2006}. In  \cite[Section~8]{Chung:1993}, it is asserted, again without proof,
that the eigenvalues of the combinatorial Laplacian of these examples are concentrated in an interval of width $O(\sqrt{pn})$, but we are not aware of a proof appearing in the literature.

Hoffman, Kahle and Paquette prove closely related results in their preprint~\cite{Hoffman:2012}.
They improve previous results on eigenvalues of random graphs and achieve precise information about the constant factor in the threshold.
Using a result by \.{Z}uk \cite{Zuk:1996}, which is a strengthening of Garland's estimate, they obtain as an immediate corollary that for $p\geq (2+\varepsilon)\frac{\log n}{n}$, the fundamental group of the random $2$-complex $X^2(n,p)$ a.a.s. has Property~(T). 

Using a weaker combinatorial notion of higher-dimensional expansion, but the same notion of Laplacian spectra, Parzanchevski, Rosenthal and Tessler show a version of a higher-dimensional Cheeger inequality~\cite{Parzanchevski:2012}. While $\Z_2$-coboundary expanding complexes also possess this weaker notion of expansion, the converse is not true (see, e.g., \cite{GundertSzedlak-2014}, where an extension of their result is presented).

In another recent article, Lu and Peng~\cite{Lu:2011} study a rather different kind of Laplacian for random complexes. Specifically, given a $k$-dimensional complex $X$ on a vertex set $V$ and a parameter $s\leq \frac{k+1}{2}$, they consider an auxiliary weighted graph on the vertex set $\binom{V}{s}$ in which $I,J\in \binom{V}{s}$ are connected by an edge of weight $w$ if $I\cap J=\emptyset$ and $I$ and $J$ are contained in precisely $w$ common $k$-faces of $X$. Lu and Peng study the normalized Laplacian of this auxiliary weighted graph. However, this Laplacian seems to capture the topology of $X$ only in a limited way. For instance, in the case $k=2$ and $s=1$, any two $2$-dimensional complexes on $n$ vertices that have a complete $1$-skeleton and are $d$-regular (every edge is contained in $d$ triangles) yield the same auxiliary graph, even though the topologies of these complexes (as measured by real cohomology groups and the usual Laplacian, say) may be very different.

\section{Preliminaries}
\label{sec:preliminaries}
\subsection{More on Eigenvalues of Graphs}

It is known that the spectrum of the normalized Laplacian $\Lap$ is contained in the interval $[0,2]$, and that $\lambda_n(\Lap)=2$ iff $G$ has a nontrivial \emph{bipartite} connected component \cite[Lemma 1.7]{Chung:SpectralGraphTheory-1997}. Moreover, if $G$ has no isolated vertices then 
$\lambda_{n-1}(\Lap)\geq \frac{n}{n-1}$.

If $G$ is \emph{$d$-regular}, i.e., $\deg_G(v)=d$ for all $v\in V$ (where $d$ may depend on $n$), then $L=d\cdot I-A=d\cdot \Lap$, and so the spectra of $A$, $L$, and $\Lap$ are equivalent (up to scaling and linear shifts): $\lambda_i(L)=d\cdot \lambda_i(\Lap)$ and $\mu_i(A)=d-\lambda_i(L)$, $1\leq i\leq n$. In particular, $\mu_1(A)=d$, $\mu_2(A)<d$ iff $G$ is connected, and $\mu_n(A)=-d$ iff $G$ has a nontrivial bipartite connected component. 

For $\mu(G)=\max\{\mu_2(A),|\mu_n(A)|\}$, it is not hard to show that for every $d$-regular graph
\begin{equation}\label{BoundAdjacencySpectrumGraph}
\mu(G) \geq \sqrt{d\cdot(n-d)/(n-1)}
\end{equation}
(see, e.g., \cite[Claim~2.8]{HooryLinialWigderson:ExpanderGraphs-2006}). Hence $\mu(G) \geq \Omega(\sqrt{d})$ for $d\leq 0.99n$, say, which shows that the concentration results for the eigenvalues of random graphs are essentially optimal.
For constant $d$, one has the sharper \emph{Alon-Boppana bound} $\mu(G) \geq  2\sqrt{d-1}\cdot (1-O(1/\log^2 n))$, see \cite{Nilli:SecondEigenvalueGraph-1991,Friedman:GeometricAspectsGraphsEigenfunctions-1993}.

A $d$-regular graph $G$ is called a \emph{Ramanujan graph} if it meets this bound for the spectral gap, i.e., if $\mu(G)\leq 2\sqrt{d-1}$.
It is a deep result due to Lubotzky, Phillips and Sarnak~\cite{LubotzkyPhillipsSarnak:RamanujanGraphs-1988} and independently to Margulis~\cite{Margulis:ExplicitGroupTheoreticConstructions-1988} that for every fixed number $d$ with $d-1$ prime, there exist Ramanujan graphs on $n$ vertices for infinitely many $n$ (and moreover, these graphs can be explicitly constructed). Recently, the existence of bipartite Ramanujan graphs with arbitrary degree and arbitrary number of vertices has been established by Marcus, Spielman and Srivastava \cite{MarcusSpielmanSrivastava2015,MarcusSpielmanSrivastava2015-2}.

\subsection{Eigenvalues of Random Graphs}\label{subsec:EigenvaluesAdjacencyRandomGraphs}

In the introduction, Theorem~\ref{thm:concentration-graph-eigenvalues} summarizes known results on the concentration of eigenvalues for random graphs $G(n,p)$. Here we want to explain the corresponding references in more detail.
For the normalized Laplacian the situation is simple:  Building on the results for the adjacency matrix and relating the spectrum of $\Lap(G(n,p))$ to that of $A(G(n,p))$, Coja-Oghlan \cite{CojaOghlan:2007gj} proved the result for the normalized Laplacian for probabilities $p\geq C \cdot \log(n)/n$ with a suitable constant $C$. For $p \gg (\log n)^2/n$ this was also shown by Chung, Lu and Vu \cite{Chung:2003wn}. A recent preprint by Hoffman, Kahle and Paquette \cite{Hoffman:2012} gives the precise result allowing all constants $C>1$ (and even $C >\frac{1}{2}$ when considering only the giant component of $G(n,p)$).

For the adjacency matrix the situation in the literature is more involved:
F\"uredi and Koml\'os~\cite{Furedi:1981ti} showed that for constant $p$ a.a.s. $\mu(G(n,p))=O(\sqrt{d})$, where $d=p(n-1)$ is the expected average degree. Their method of proof, the so-called \emph{trace method}, can be adapted to cover the range $\frac{\ln(n)^7}{n} \leq p \leq 1 - \frac{\ln(n)^7}{n}$ (see \cite{CojaOghlan:2005}).
Feige and Ofek~\cite{Feige:2005hp} extended the result to values of $p$ as small as $C\cdot \log n/n$, but their proof requires an upper bound on $p$. They used methods of Friedman, Kahn, and Szemer\'edi~\cite{Friedman:1989}, who proved that $\mu(G)=O(\sqrt{d})$ holds a.a.s.\ for \emph{random $d$-regular graphs} with constant $d$.
The most precise result is again by Hoffman, Kahle and Paquette \cite{Hoffman:2012}, who show that $\mu(G(n,p))=O(\sqrt{d})$ a.a.s. for $p\geq\gamma \log(n)/n$ for \emph{all} $\gamma>0$.

More precisely, in \cite{Hoffman:2012} it is shown that a.a.s. 
\begin{equation}\label{preciseStatementAdjacencyGraphs}
|\langle Ax,y\rangle| =O(\sqrt{d}) \text{ for all unit vectors } x,y \text{ such that } x\perp\1. 
\end{equation}
This, together with $\frac{1}{n}\langle A\1,\1\rangle=\frac{2|E|}{n} \in [d - O(\sqrt{d}), d +O(\sqrt{d})]$, which follows from a straight-forward application of a Chernoff bound, gives the result as stated in Theorem~\ref{thm:concentration-graph-eigenvalues} (see e.g. \cite[Lemma~2.1]{Feige:2005hp} or Lemma~\ref{LEMMAConditions} in this paper).

 We remark that both parts of Theorem~\ref{thm:concentration-graph-eigenvalues} can be extended to very sparse random graphs $G(n,p)$ with $p=\Theta(1/n)$ (for which they fail to hold as stated) by passing to a suitable large \emph{core subgraph}, see \cite{CojaOghlan:2007gj,Feige:2005hp,Hoffman:2012}. Moreover, analogous results are also known for other random graph models, including random $d$-regular graphs (see above) and 
random graphs with prescribed expected degree sequences \cite{Chung:2003wn,CojaOghlan:2009ud}.

\subsection{Simplicial Complexes and Cohomology}

A (finite, abstract) \emph{simplicial complex} $X$ is a finite set system that is closed under taking subsets, i.e.\ $F \subseteq G \in X$ implies $F \in X$. The sets in $X$ are called \emph{simplices} or \emph{faces} of $X$. The \emph{dimension} of a face $F$ is $\dim(F):=|F|-1$. We denote the set of $i$-dimensional faces of $X$ by $X_i$. The dimension of $X$ is the maximum dimension of any of its faces.
The $0$\mbox{-}dimensional faces are called \emph{vertices}. Formally, these are singletons (one-element sets) but in this context we will usually identify the singleton $\{v\}$ with its unique element $v$.

A $k$-dimensional simplicial complex is \emph{pure} if all maximal simplices in $X$ have dimension $k$. We define the \emph{degree} of a face $F$ as $\deg(F)=|\{G \in X_k : F \subseteq G\}|$. The \emph{link} of $F$ in $X$ is $\lk(F,X):=\{G \in X \colon F \cup G \in X, F \cap G=\emptyset\}$.
We denote by $K_n^k$ the  \emph{complete $k$-dimensional complex} on $n$ vertices, i.e.
$K_n^k = \{F \subseteq [n]: |F| \leq k+1\}.$

\subsubsection*{Orientations and Incidence Numbers} Throughout we assume that we have fixed a linear ordering on the vertex set $V:=X_0$ of $X$, and we consider the faces of $X$ with the orientations given by the order of their vertices. Formally, consider an $i$-simplex $F=\{v_0, v_1 ,\ldots, v_{i}\} \in X_i$, where $v_0<v_1<\ldots <v_i$.  For an $(i-1)$-simplex $G\in X_{i-1}$, we define the \emph{oriented incidence number} $[F:G]$ by setting $[F:G]:=(-1)^j$ if $G\subseteq F$ and $F\setminus G=\{v_j\}$, $0\leq j\leq i$, and $[F:G]:=0$ if $G\not\subseteq F$. In particular, for every vertex $v\in X_0$ and the unique empty face $\emptyset\in X_{-1}$, we have $[v:\emptyset]=1$.

\subsubsection*{Cohomology}
Let $X$ be a finite simplicial complex and let $\G$ be an Abelian group (we will mostly be concerned with the cases $\G=\Z_2$ and $\G=\R$, respectively). We denote by $C^i(X;\G)$ the group $\G^{X_i}$ of functions from $X_i$ to $\G$, which are called \emph{$i$-dimensional cochains of $X$ with coefficients in $\G$}. In particular, since $\emptyset$ is the unique empty face of $X$, we have $C^{-1}(X;\G)\cong \G$. It is convenient to define $C^i(X;\G):=0$ for $i<-1$ or $i>\dim X$.
The characteristic functions $e_F$ of faces $F\in X_i$ form a basis of $C^i(X;\G)$. They are called \emph{elementary cochains}. 

The \emph{coboundary map} $\delta_i\colon C^i(X;\G) \rightarrow C^{i+1}(X,\G)$ is the linear map given by 
\[
(\delta_i f)(F) := \sum_{G \in X_i} [F:G]\cdot  f(G) 
\]
for $f\in C^i(X;\G)$, $-1\leq i<\dim X$, and $\delta_i=0$ otherwise.

It is an easy but central observation that the composition $\delta_i\circ \delta_{i-1}=0$, which means that $B^i(X;\G):=\im \delta_{i-1} \subseteq Z^i(X;\G):=\ker \delta_i$. The elements of $B^i(X;\G)$ and $Z^i(X;\G)$ are called $i$-dimensional \emph{coboundaries} and \emph{cocycles}, respectively. Since $B^i(X;\G) \subseteq Z^i(X;\G)$, we can form the quotient group $\tilde{H}^i(X;\G):=Z^i(X;\G)/B^i(X;\G)$, the $i$-th (reduced) \emph{cohomology group} of $X$ with coefficients in $\G$.

\subsection{Norms on Cochains and Expansion}
\label{sec:expansion}

We now describe a very general definition of \emph{expansion} for simplicial complexes, which was introduced in \cite{Gromov:SingularitiesExpanders2-2010} (with a slightly different normalization and under the name \emph{inverse \textup{(}co\textup{)}filling norm}). 

Let $X$ be a finite simplicial complex. Assume that every cochain group $C^i(X;\G)$ is equipped with a \emph{pseudonorm} $\|\cdot \|$, taking real values and satisfying $\|f\|=\|-f\|$ and $\|f+g\|\leq \|f\|+\|g\|$ for all $f,g\in C^i(X;\G)$. We will focus on the following two cases.
\begin{enumerate} 
\item \textbf{$\R$-cochains with weighted $\ell_2$-norm:} Assume that we are given a \emph{weight function $w$} with nonnegative real values on the simplices of $X$. Define by $\langle f,g\rangle :=\sum_{F\in X_i} w(F)f(F)g(F)$ a weighted inner product on $C^i(X;\R)$. Observe that the inner products obtained in this way are characterized by the condition that the elementary cochains be pairwise orthogonal.
We then consider the corresponding weighted $\ell_2$-norm
$\|f\|=\|f\|_2:=\sqrt{\langle f,f\rangle}.$
\item \textbf{$\Z_2$-cochains with weighted Hamming norm:} Let $w$ be as before and define the \emph{weighted Hamming norm} on $C^i(X;\Z_2)$ by $\|f\|:=\sum_{F\in X_i:f(F)=1}w(F).$
\end{enumerate}

The idea is to define a notion of $i$-dimensional expansion that provides lower bounds for the norm of the coboundary $\delta_{i-1}(f)\in C^i(X;\G)$ of $(i-1)$-dimensional cochains $f\in C^{i-1}(X;\G)$. However, we cannot define such a lower bound in terms of the norm $\|f\|$ of $f$, since the set $B^{i-1}(X;\G)$ is always contained in the kernel of the coboundary operator $\delta=\delta_{i-1}$. Thus, the right comparison measure is the \emph{distance} of a cochain $f$ from this \emph{trivial part of the kernel}. That is, we define, for $f\in C^{i-1}(X;\G)$,
$$\|[f]\|:=\min\{\|f+\delta_{i-2}g\|\colon g\in C^{i-2}(X;\G)\}.$$

\subsubsection*{Coboundary Expansion for Arbitrary Coefficients}\label{def:face-expansion}
Suppose every cochain group $C^i(X;\G)$ is equipped with a pseudonorm $\|\cdot\|$ as above. We say that $X$ is \emph{$\varepsilon$-expanding in dimension $i$ } 
(with respect to 
$\G$ and the given norm) 
if 
$$
\|\delta f \| \geq \varepsilon \cdot \|[f]\|
$$
for all $f \in C^{i-1}(X;\G)$. The best possible $\varepsilon$ is called the $i$-dimensional expansion of $X$. Note that, in particular, $\tilde{H}^{i-1}(X;\G)=0$ if $X$ has $i$-dimensional expansion $\varepsilon>0$.

For an infinite family of $k$-dimensional complexes $(X_n)_{n \in \N}$ (where $k$ is fixed and independent of $n$) we say that the family $(X_n)$ is \emph{expanding in dimension $i$} (with respect to $\G$ and the given norm) if the $i$-dimensional expansion of all $X_n$ is bounded away from zero.

\subsubsection*{$\Z_2$-Coboundary Expansion}
Now we focus on the case of $\Z_2$-coefficients. Define a weight function by $w(F):=1/|X_i|$ for $F\in X_i$ (whenever $|X_i|>0$). In this setting, the normalized Hamming weight of a $\Z_2$-cochain $f\in C^{i-1}(X;\Z_2)$ is just the number of faces in the support of $f$ divided by the number of all $(i-1)$-faces of $X$.

If $X$ is is $\varepsilon$-expanding in dimension $i$ with respect to this norm, we also say that $X$ is \emph{$\Z_2$-coboundary $\varepsilon$-expanding} in dimension $i$.

Note that in the case $i=1$ of graphs, there are just two $0$-dimensional coboundaries, namely the constant functions ${\bf 0}$ and $\1$ on the set $V=X_0$ of vertices. Moreover, a $0$-dimensional cochain $f\in C^0(X;\Z_2)$ is in bijective correspondence with its support $S=\{v\in V\colon f(v)=1\}\subseteq V$, and $\|[f]\|=\frac{\min\{|S|,|V\setminus S|\}}{|V|}$. Thus, $1$-dimensional $\Z_2$-coboundary 
expansion corresponds precisely to the definition (\ref{eq:edge-expansion}) of edge expansion discussed in the introduction.

A basic observation in this context is that complete complexes are $\Z_2$-coboundary expanding in all dimensions. 
This was observed independently by Gromov~\cite{Gromov:SingularitiesExpanders2-2010}, Linial, Meshulam and Wallach~\cite{LinialMeshulam:HomologicalConnectivityRandom2Complexes-2006,MeshulamWallach:HomologicalConnectivityRandomComplexes-2009} and Newman and Rabinovich~\cite{Newman:2011}:
\begin{proposition}
 \label{prop:gromov}
The complete complex $K^k_n$ has $i$-dimen\-sional  $\Z_2$-coboundary 
expansion $1$ for all $i \in \{0,1,\ldots, k\}$. 
\end{proposition}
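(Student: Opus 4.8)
The plan is to prove that $K^k_n$ has $i$-dimensional $\Z_2$-coboundary expansion exactly $1$ by establishing two things: first that the expansion is \emph{at least} $1$, i.e. $\|\delta f\| \geq \|[f]\|$ for every $f \in C^{i-1}(K^k_n; \Z_2)$, and second that this is tight, i.e. there exists a cochain witnessing equality (or a sequence approaching $1$). Since we work with $\Z_2$-coefficients and the weight function $w(F) = 1/|X_i|$, the quantities $\|\delta f\|$ and $\|[f]\|$ are normalized face-counts, so the inequality $\|\delta f\| \geq \|[f]\|$ unwinds into a purely combinatorial statement about the complete complex: counting the $i$-faces in the support of $\delta f$ against the minimal support size of $f$ modulo coboundaries.

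The main work is the lower bound, and the natural approach is an averaging/double-counting argument over the links of $(i-2)$-faces, exploiting the fact that $K^k_n$ is \emph{complete} so that every such link is itself a complete complex on the remaining vertices. First I would reduce to understanding $\|[f]\|$ concretely: for a fixed $f$, choosing the optimal $g \in C^{i-2}(K^k_n;\Z_2)$ to minimize $\|f + \delta g\|$ amounts to a cone/contraction operation, and the key structural fact is that for the complete complex one can ``cone off'' the cochain at a chosen vertex. Concretely, I would pick a vertex $v$ and use the decomposition of $C^{i-1}$ into faces containing $v$ and faces not containing $v$; the cone with apex $v$ gives a canonical coboundary correction, and a cochain $f$ with small coboundary must, after optimal correction, already have small support. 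This is essentially the inductive heart of the Linial--Meshulam--Wallach argument, and I expect the cleanest route is induction on $i$ (or on the dimension), with the base case $i=1$ being exactly the statement that $K_n$ is $1$-edge-expanding with constant $1$, which reduces to the elementary isoperimetric inequality for the complete graph: a cut $(S, V \setminus S)$ has $|S|\cdot|V\setminus S|$ crossing edges out of $\binom{n}{2}$, and normalizing gives precisely the factor matching $\min\{|S|,|V\setminus S|\}/|V|$.

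For the inductive step, the plan is to relate the coboundary of $f$ in $K^k_n$ to the coboundaries of its restrictions to links. For each vertex $v$, restriction to the link $\lk(v)$, which is a complete complex $K^{k-1}_{n-1}$, sends $f$ to an $(i-2)$-cochain there, and by the inductive hypothesis that link is expanding with constant $1$. Summing the resulting inequalities over all $v$ (or over all $(i-2)$-faces) and carefully tracking how each $i$-face and each $(i-1)$-face is counted across the various links should yield $\|\delta f\| \geq \|[f]\|$ after normalization; the combinatorial bookkeeping of the weights — ensuring the averaged local inequalities assemble into the global one with the correct constant $1$ and no loss — is where I expect the real obstacle to lie. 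The danger is that a crude averaging loses a constant factor, so the argument must exploit that the minimizing $g$ for $\|[f]\|$ can be built compatibly from the local minimizers in the links, which is precisely where completeness is essential.

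For tightness, I would exhibit an explicit near-extremal cochain. A clean candidate is the characteristic cochain of all $(i-1)$-faces containing a fixed vertex $v$ (or a balanced analogue): such a cochain is a coboundary-corrected ``half-space'' whose coboundary support, relative to $\|[f]\|$, approaches the ratio $1$ as $n \to \infty$, mirroring the graph case where a balanced bipartition $(S, V\setminus S)$ with $|S| = n/2$ makes $|E(S,V\setminus S)|/|E| = (n/2)^2/\binom{n}{2} \to 1/2 = \|[f]\|$. This shows the constant cannot be improved beyond $1$, completing the proof that the $i$-dimensional expansion equals exactly $1$.
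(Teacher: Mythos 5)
The first thing to note is that the paper itself contains \emph{no proof} of Proposition~\ref{prop:gromov}: it is quoted as a known fact with citations to Gromov, Linial--Meshulam--Wallach and Newman--Rabinovich. So your attempt has to be measured against the standard argument from those sources, and there it falls short in the decisive step. The standard proof is a direct, \emph{non-inductive} cone-and-average computation. For $f\in C^{i-1}(K^k_n;\Z_2)$ and a vertex $v$, define $f_v\in C^{i-2}(K^k_n;\Z_2)$ by $f_v(\tau):=f(\tau\cup\{v\})$ for $\tau\not\ni v$ and $f_v(\tau):=0$ otherwise. Over $\Z_2$ (signs are irrelevant) one checks the pointwise identity
\[
(f+\delta_{i-2} f_v)(\sigma)\;=\;\begin{cases} (\delta_{i-1} f)(\sigma\cup\{v\}) & \text{if } v\notin\sigma,\\[2pt] 0 & \text{if } v\in\sigma,\end{cases}
\]
so that $|\supp(f+\delta f_v)|$ is \emph{exactly} the number of $i$-faces of $\supp(\delta f)$ containing $v$. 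Summing over all $n$ vertices counts each face of $\supp(\delta f)$ exactly $i+1$ times, while each summand is trivially at least $\min_g|\supp(f+\delta g)|$; hence $(i+1)\,|\supp(\delta f)|\geq n\cdot\min_g|\supp(f+\delta g)|$, which after normalizing by $\binom{n}{i+1}$ and $\binom{n}{i}$ gives $\|\delta f\|\geq\frac{n}{n-i}\,\|[f]\|\geq\|[f]\|$. Your proposal names the cone but never states this identity; instead it outsources the work to an induction over links, and you yourself flag the assembly of the local inequalities (``the combinatorial bookkeeping\dots is where I expect the real obstacle to lie'') as unresolved. That flagged step \emph{is} the proof: without the identity above, averaging restrictions over links is Garland-style localization, which (as Theorem~\ref{Garland} illustrates) genuinely loses additive constants and is not available over $\Z_2$ in this form. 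No induction, no base case, and no compatibility of local minimizers is needed --- the comparison of each cone correction with the optimal one is just $|f+\delta f_v|\ge\min_g|f+\delta g|$. As written, your lower bound is not established.

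The tightness half of your plan contains a concrete error and is, moreover, aimed at a claim that is false as literally stated. Take $f$ the indicator of all $(i-1)$-faces containing a fixed vertex $v$. If $i$ is even, then $f=\delta g$ where $g$ is the indicator of all $(i-2)$-faces containing $v$, so $\|[f]\|=0$ and the example is degenerate. If $i$ is odd, then $\delta f$ is the indicator of all $i$-faces containing $v$, and since $\|[f]\|\le\|f\|=i/n$ while $\|\delta f\|=(i+1)/n$, the ratio is at least $(i+1)/i$, bounded away from $1$. The balanced analogue that does work is the multipartite (``rainbow'') cochain: partition $V$ into $i+1$ parts $A_0,\dots,A_i$ of size $\approx n/(i+1)$ and let $f(\sigma)=1$ iff $\sigma$ meets each of $A_1,\dots,A_i$ in exactly one vertex; then $\delta f$ is the indicator of the rainbow $i$-faces and $\|\delta f\|/\|f\|\approx\frac{n}{n-i}$, though one must still prove $\|[f]\|=(1-o(1))\|f\|$. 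But note that the averaging bound above yields expansion at least $\frac{n}{n-i}>1$ (already for graphs the expansion of $K_n$ is $\frac{n}{n-1}$, not $1$), so ``expansion exactly $1$'' is unattainable at any finite $n$. The proposition is used in the paper, and should be read, only as the one-sided statement that $K^k_n$ is $1$-expanding --- which is all the subsequent Chernoff-bound argument needs --- so the tightness direction you spend effort on is both incorrectly executed and not actually part of the result.
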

From this, standard Chernoff bounds immediately imply that a.a.s., $X^k(n,p)$ is $\Z_2$-coboundary expanding in dimension $k$ and $H^{k-1}(X^k(n,p);\Z_2)=0$ if $p> C\log n/n$ for a suitable constant $C$.
Much of the work in \cite{LinialMeshulam:HomologicalConnectivityRandom2Complexes-2006,MeshulamWallach:HomologicalConnectivityRandomComplexes-2009} is devoted to refining this argument to obtain the optimal constant $C=k$ for the threshold. 

Dotterrer and Kahle~\cite{Dotterrer:2010fk} prove results analogous to Proposition~\ref{prop:gromov} for some other complexes, specifically for skeleta of crosspolytopes and for complete multipartite complexes. They also explicitly raise the question whether there is some higher-dimensional analogue of the Cheeger inequality.
The most straightforward attempt at such an inequality would be to relate $\Z_2$-coboundary expansion and eigenvalue gaps of higher-dimensional Laplacians, which we discuss next. 

\subsection{Matrices and their spectra}
A symmetric real ($n\times n$)-matrix has a multiset of $n$ real eigenvalues, called its \emph{spectrum}, and $\R^n$ has an orthonormal basis of corresponding eigenvectors.

We recall the variational characterization of eigenvalues:
\begin{theorem}[Courant-Fischer Theorem, see e.g.\ {\cite[Theorem 4.2.11]{HornJohnson}}]\label{CourantFischer}
Let $M \in \R^{n \times n}$ be a symmetric matrix with eigenvalues $\lambda_1 \leq \lambda_2 \leq \ldots \leq \lambda_n$, and let $k$ be a given integer with $1\leq k \leq n$. Then
$$
\lambda_k = \min_{w_1,w_2,\ldots,w_{n-k} \in \R^n} \max_{\substack{x \neq 0, x \in \R^n\\x \perp w_1,w_2,\ldots,w_{n-k}}} \frac{\langle M x, x \rangle}{\langle x, x \rangle}
$$
and
$$
\lambda_k = \max_{w_1,w_2,\ldots,w_{k-1} \in \R^n} \min_{\substack{x \neq 0, x \in \R^n\\x \perp w_1,w_2,\ldots,w_{k-1}}} \frac{\langle M x, x \rangle}{\langle x, x \rangle}.
$$
\end{theorem}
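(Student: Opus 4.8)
The plan is to work in an orthonormal eigenbasis $v_1,\dots,v_n$ of $M$ (which exists by the spectral theorem recalled at the start of this subsection), with $Mv_i=\lambda_i v_i$, and to analyze the Rayleigh quotient $R(x):=\langle Mx,x\rangle/\langle x,x\rangle$. Writing $x=\sum_i c_i v_i$ gives $R(x)=\big(\sum_i \lambda_i c_i^2\big)/\big(\sum_i c_i^2\big)$, a convex combination of the eigenvalues weighted by the squared coordinates. The two facts that drive the whole argument are: on $\operatorname{span}(v_1,\dots,v_k)$ one has $R(x)\le\lambda_k$ (with equality at $x=v_k$), and on $\operatorname{span}(v_k,\dots,v_n)$ one has $R(x)\ge\lambda_k$ (again with equality at $v_k$). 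I would establish the first (min-max) identity in detail and then deduce the second by applying it to $-M$, whose eigenvalues are $-\lambda_n\le\dots\le-\lambda_1$.

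For the inequality $\min_w\max_x R(x)\le\lambda_k$ I exhibit a good choice of constraint vectors: take $w_i=v_{k+i}$ for $1\le i\le n-k$. Then the feasible set $\{x\neq 0:x\perp w_1,\dots,w_{n-k}\}$ is exactly the nonzero part of $\operatorname{span}(v_1,\dots,v_k)$, on which $R(x)\le\lambda_k$, so the inner maximum equals $\lambda_k$ (attained at $v_k$) and the outer minimum is therefore at most $\lambda_k$. For the reverse inequality $\min_w\max_x R(x)\ge\lambda_k$, I argue that \emph{every} choice of $w_1,\dots,w_{n-k}$ admits a feasible $x$ with $R(x)\ge\lambda_k$: the orthogonal complement $A:=\{x:x\perp w_1,\dots,w_{n-k}\}$ has $\dim A\ge n-(n-k)=k$, while $U:=\operatorname{span}(v_k,\dots,v_n)$ has $\dim U=n-k+1$, so $\dim A+\dim U\ge n+1$ and the two subspaces meet in a nonzero vector $x$. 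This $x$ is feasible and lies in $U$, hence $R(x)\ge\lambda_k$, forcing the inner maximum, and thus the outer minimum, to be at least $\lambda_k$.

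The one genuinely non-formal step, which I expect to be the crux, is exactly this dimension count producing a nonzero common vector; everything else is bookkeeping with the Rayleigh quotient. I would justify it via the standard inequality $\dim(A\cap B)\ge\dim A+\dim B-n$ for subspaces of $\R^n$ (from $\dim(A+B)\le n$), applied with $A$ the orthogonal complement above and $B=U$. Having established the min-max identity $\lambda_k=\min_w\max_x R(x)$, the max-min identity follows by duality: the eigenvalues of $-M$ are $\mu_j:=-\lambda_{n+1-j}$ in increasing order, and applying the first identity to $-M$ at index $j=n+1-k$ (which involves $n-j=k-1$ constraint vectors) and negating, using $R_{-M}=-R_M$ to turn $\min\max$ into $\max\min$, yields precisely $\lambda_k=\max_{w_1,\dots,w_{k-1}}\min_{x\perp w_1,\dots,w_{k-1}} R(x)$, as claimed.
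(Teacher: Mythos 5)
Your proof is correct: the choice $w_i=v_{k+i}$ gives the upper bound, the dimension count $\dim(A\cap U)\ge \dim A+\dim U-n\ge 1$ gives the lower bound, and the passage to $-M$ cleanly yields the max-min form from the min-max form. Note, however, that the paper does not prove this statement at all --- it quotes the Courant--Fischer theorem as a known classical result, citing Horn and Johnson; your argument is essentially the standard textbook proof one finds in that reference, so there is no substantive difference in approach to report.
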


For a matrix $M$ we denote ist $\ell_2$-norm by $\|M\| = \max_{x\neq 0}\|Mx\|/\|x\|$, which for a symmetric matrix $M$ equals the in absolute value largest eigenvalue of $M$.

\subsection{Higher-Dimensional Laplacians and Adjacency Matrices}
\label{sec:matrices-complexes}
We introduce generalizations of the graph Laplacians and the adjacency matrix for a $k$-di\-men\-sion\-al complex in all dimensions $0 \leq i \leq k-1$. Later on, we will only be concerned with these matrices in dimension $k-1$.
\subsubsection*{Adjacency matrices}
For a finite $k$-dimensional simplicial complex $X$ and $0 \leq i \leq k-1$ we define the \emph{adjacency matrix} $A_i=A_i(X)$ by
\[
 (A_i(X))_{F,G} = \begin{cases}
                -[F \cup G:F][F\cup G:G] = [F:F \cap G][G:F\cap G]& \text{if $F \sim G$},\\
		        0& \text{otherwise,}
               \end{cases}
\]
where $F,G \in X_i$ and we write $F \sim G$ if $F$ and $G$ share a common $(i-1)$-face $F\cap G$ and $F \cup G \in X_{i+1}$.
Figure~\ref{fig:Adjacency2D} illustrates the case $i=1$. An entry $A_1(X)_{e,e'}$ is non-zero exactly if the two edges $e$ and $e'$ share a common vertex and the triangle $e \cup e'$ is contained in $X$. The sign of $A_1(X)_{e,e'}$ is then determined by the orientations of the two edges.
\begin{figure}[htbp]
  \centering\includegraphics{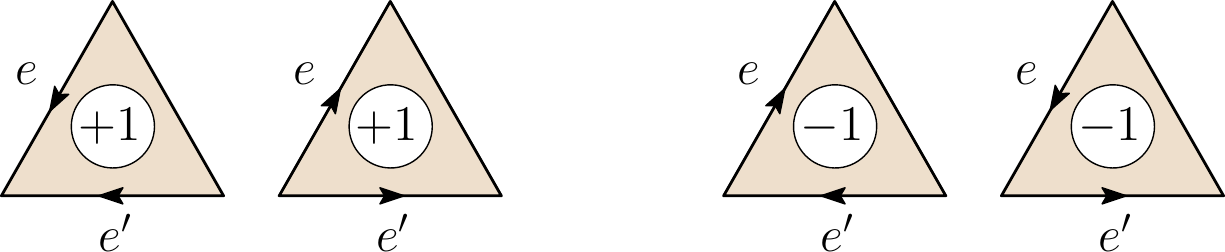}
  \caption{Signs of non-zero entries $A_1(X)_{e,e'}$. The arrows represent the orientations of edges. \label{fig:Adjacency2D}}
\end{figure}

Note that the matrix $A_0(X)$ agrees with the adjacency matrix of the graph $(X_0,X_1)$ because
$[\{u,v\}\!:\!u][\{u,v\}\!:\!v] = -1$ for all vertices $u,v \in X_0$.  The motivation for the signs in higher dimensions will hopefully become clear later on.

\subsubsection*{Weighted Laplacians}
Following the exposition in \cite{HorakJost}, we begin by defining a general weighted Laplacian. Suppose we are given a nonnegative weight function $w$ on the faces of a finite simplicial complex $X$ and that the spaces 
$C^i(X;\R)$ are equipped with the weighted inner product and the corresponding weighted $\ell_2$-norm as described above.

The elementary cochains $e_F$, $F\in X_i$, form an orthogonal basis of $C^i(X;\R)$. With respect to these bases, the coboundary map $\delta_i\colon C^i(X;\R)\rightarrow C^{i+1}(X;\R)$ is given by the following $|X_{i+1}|\times |X_i|$-matrix (for which we abuse notation and again use the symbol $\delta$):
\[
 (\delta_i(X))_{F,G} = [F:G].
\]

Consider the \emph{transpose map} $\delta_i^\ast\colon C^{i+1}(X;\R)\rightarrow C^i(X;\R)$ of $\delta_i(X)$ with respect to the given inner product. This transpose is determined by the condition that $\langle \delta_i^\ast f,g\rangle=\langle f,\delta_i g\rangle$ for all $f\in C^{i+1}(X;\R)$ and $g\in C^{i}(X;\R)$. More explicitly, 
$$(\delta_i^\ast f) (G)=\sum_{F\in X_{i+1}} \frac{w(F)}{w(G)}[F:G]f(F)$$ 
for $f\in C^{i+1}(X;\R)$ and $G\in X_i$.

For example, in the case of unit weights $w(F)=1$ for all $F\in X$, we get the standard inner product on $C^i(X;\R)$, 
and $\delta_i^\ast=\partial_{i+1}$ coincides with the usual \emph{boundary map} given on elementary cochains by $\partial_{i+1}(e_F)=\sum_{G\in X_i} [F:G]e_G$, $F\in X_{i+1}$. 

In general, for arbitrary weights $w$ on $X$, we define the \emph{weighted Laplacian} by 
$$\wlap_i^\down:=\delta_{i-1}\delta_{i-1}^\ast,\qquad \wlap_i^\up:=\delta_i^\ast \delta_i,\qquad \wlap_i:=\wlap_i^\down+\wlap_i^\up.$$
Note that all three maps $\wlap_i^\down,\wlap_i^\up,\wlap_i$ are \emph{self-adjoint} and \emph{positive semidefinite} (with respect to the given weighted inner product) linear operators on $C^i(X;\R)$.

In general, setting $\mathcal{H}_i=\mathcal{H}_i(X;\R):=\ker \wlap_i=\ker \wlap_i^\down \cap \ker \wlap_i^\up = \ker \delta_{i-1}^\ast \cap Z^i(X;\R)$, one gets a \emph{Hodge decomposition} of $C^i(X;\R)$ into pairwise orthogonal subspaces 
\begin{equation}\label{eqn:hodge-decomp}
 C^i(X;\R)=\mathcal{H}_i\oplus B^i(X;\R)\oplus \im (\delta_{i}^\ast),
\end{equation}
(see \cite{Eckmann:HarmonischeFunktionenRandwertaufgabenKomplex-1945,HorakJost}); in particular, $\mathcal{H}_i\cong H^i(X;\R)$.

\subsubsection*{Spectra of $\wlap_i^\up$ and Spectral Expansion}
Observe that, trivially, $B^i(X;\R)\subseteq \ker \wlap_i^\up$. Thus, every $f\in B^i(X;\R)$ is an eigenvector of $\wlap_i^\up$ with eigenvalue zero. We call these the trivial eigenvectors of $\wlap_i^\up$ and the trivial part of its spectrum. Thus, the nontrivial eigenvalues of $\wlap_i^\up$ are, by definition, the eigenvalues of the restriction of $\wlap_i^\up$ to the orthogonal complement (with respect to the given weighted inner product) $(B^i(X;\R))^\bot$.

By the variational definition of eigenvalues,  the minimal nontrivial eigenvalue of $\wlap_i^\up$ is given by
$$\min_{f\bot B^i(X;\R)} \frac{\langle \wlap_i^\up f,f\rangle}{\langle f,f\rangle}=\min_{f\bot B^i(X;\R)}\frac{\|\delta_if\|^2}{\|f\|^2}.$$
Thus, we see that the minimal nontrivial eigenvalue of $\wlap_i^\up$ is at least $\varepsilon^2$ iff $X$ has $(i+1)$-dimensional expansion at least $\varepsilon$ with respect to the given weighted $\ell_2$-norms on  real cochains. In this case, we will also say that $X$ is \emph{spectrally expanding} in dimension $i$.

We focus on the operator $\wlap_i^\up$, more precisely we consider $\wlap_{k-1}^\up$ for $k$-dimensional complexes because it corresponds to  coboundary expansion with respect to real coefficients and the $\ell_2$-norm. 

The spectra of the other two maps are related: By the Hodge decomposition \eqref{eqn:hodge-decomp} the spectrum of $\wlap_i$ is determined by the spectra of $\wlap_i^\down$ and $\wlap_i^\up$.  For any linear map $A$, the spectra of $AA^{\ast}$ and $A^{\ast}A$ differ only in the multiplicity of $0$; in particular, this holds for the spectra of $\wlap_i^\up$ and $\wlap_{i+1}^\down$.
Nevertheless, as we cover only $\wlap_{k-1}^\up$ for $k$-dimensional complexes, our results do not yield corresponding statements on $\wlap_{k-1}$.

\subsubsection*{Combinatorial Laplacians} The combinatorial Laplacian $L_i=L_i^\down+L_i^\up$ corresponds to the special case of the standard inner product $\langle f,g\rangle = \sum_{f\in X_i}f(F)g(F)$, that is, the case of \emph{unit weights} $w(F)=1$ for all $F\in X$. Thus,  $L_i^\up=L^\up_i(X) = \partial_{i+1}\delta_i.$

Recall that the matrix corresponding to the coboundary map $\delta_i$ with respect to the orthogonal basis of elementary cochains is, by abuse of notation, also denoted by $\delta_i=\delta_i(X)$, and its transpose $\delta_i^T$ corresponds to the boundary map $\partial_{i+1}$. The combinatorial Laplacian $L^\up_i$ can be expressed as the matrix $\delta_i^T\delta_i$.

We can now motivate the signs in the definition of the adjacency matrix $A_i(X)$: Recall that for a graph $G$ the combinatorial Laplacian satisfies $L(G) = D(G) - A(G)$. If we let $D_i(X)$ denote the diagonal matrix with entry ${D_i}_{F,F}=|\{H \in X_{i+1}: F \subset H\}|$ for $F \in X_i$, we also have $L^\up_i(X) = D_i(X) - A_i(X)$.

\subsubsection*{Normalized Laplacians} Suppose that $X$ is a pure $k$-dimensional simplicial complex.
The normalized Laplacian $\Lap_i=\Lap_i^\down+\Lap_i^\up$ is the special case of the weighted Laplacian obtained by taking the weight function $w(F):=\deg(F)$. That is, the corresponding weighted inner product is 
$$\langle f,g\rangle=\sum_{F\in X_i}\deg(F)f(F)g(F).$$

Let $\delta_i^\ast$ be the adjoint of $\delta_i$ with respect to this weighted inner product. Thus, 
$$(\delta_i^\ast f) (G)=\sum_{F\in X_{i+1}} \frac{\deg(F)}{\deg(G)}[F:G]f(F).$$ 
Note that we have $\deg(F) > 0$ for every $F \in X$, since we assume that $X$ is pure.
The normalized Laplacian is then $\Lap_i^\up=\Lap_i^\up(X)=\delta_i^\ast\delta_i$.

With respect to the basis of elementary cochains, the map $\Lap^\up_i$ corresponds to  the matrix $W_i^{-1}\delta_i^TW_{i+1}\delta_i$, where $W_i(X)$ denotes the diagonal matrix with entry ${W_i}_{F,F}=\deg(F)$. 
As $W_{k-1} = D_{k-1}$ and $W_k = I$, for $i = k-1$ we can write $\Lap^\up_{k-1}$ as the matrix $D_{k-1}^{-1}L^\up_{k-1}=I-D_{k-1}^{-1} A_{k-1}$.

\subsubsection*{Eigenvalues of the Complete Complex}

As an example we consider the spectra of the three matrices $L^\up_{k-1}(K_n^k)$, $\Lap^\up_{k-1}(K_n^k)$ and $A_{k-1}(K_n^k)$ for the complete complex $K_n^k$.
First recall the following well-known (and easily verifiable) lemma:

\begin{lemma}\label{BasisBoundariesCoboundaries}
For a complex $X$ with complete $(k-1)$-skeleton, the space $B^{(k-1)}(X) = \im \delta_{k-2}$  has dimension $\binom{n-1}{k-1}$. A basis is given by  $\big\{\delta_{k-2} e_F \!:\!  1 \notin F \in \binom{[n]}{k-1}\big\}$.
For the complete complex $K_n^k$, the space $\im \delta_{k-1}^\ast(K_n^k)$ is $\binom{n-1}{k}$-dimensional and has $\big\{\delta_{k-1}^\ast e_F \!:\! 1 \in F \in \binom{[n]}{k+1}\big\}$ as a basis.
\end{lemma}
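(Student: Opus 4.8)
The plan is to prove each statement by combining a dimension count with a direct linear-independence check for the proposed family; since each family will have exactly as many elements as the dimension of the space it spans, exhibiting independence suffices. First I would compute $\dim B^{k-1}(X)=\dim\im\delta_{k-2}$. Because $\delta_{k-2}$ and the groups $C^{k-2},C^{k-1}$ depend only on the complete $(k-1)$-skeleton, this dimension is the same for every $X$ with complete $(k-1)$-skeleton. Writing $b_i:=\dim\im\delta_{i-1}$ (so $b_0=\dim\im\delta_{-1}=1$, spanned by $\1=\delta_{-1}e_\emptyset$), I would use that the complete $(k-1)$-skeleton is acyclic below its top dimension, i.e.\ $\tilde{H}^i=0$ for $-1\le i\le k-2$ (equivalently, the augmented cochain complex of the simplex is exact, via the cone-on-vertex-$1$ homotopy). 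Rank-nullity for $\delta_i\colon C^i\to C^{i+1}$ then gives $\dim C^i=\dim Z^i+b_{i+1}$ with $\dim Z^i=b_i$ for $i\le k-2$, hence the recursion $b_{i+1}=\binom{n}{i+1}-b_i$; Pascal's identity yields $b_i=\binom{n-1}{i}$, and in particular $b_{k-1}=\binom{n-1}{k-1}$.

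The crux of the first statement is the independence of the $\binom{n-1}{k-1}$ cochains $\delta_{k-2}e_F$ with $1\notin F\in\binom{[n]}{k-1}$. I would suppose $\delta_{k-2}g=0$ for $g=\sum_{1\notin F}c_Fe_F$ and evaluate on the distinguished $(k-1)$-face $G:=\{1\}\cup F$: among the $(k-2)$-faces $H\subset G$, only $H=F$ avoids the vertex $1$, so $(\delta_{k-2}g)(G)=[G:F]\,c_F$, and since $[G:F]=\pm1$ this forces $c_F=0$ for each such $F$. As the count matches $\dim B^{k-1}$, this family is a basis.

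For the second statement I would read the dimension off the Hodge decomposition \eqref{eqn:hodge-decomp}: for $K_n^k$ one has $\mathcal{H}_{k-1}\cong H^{k-1}(K_n^k)=0$ (the $k$-skeleton of the simplex is $(k-1)$-connected) and $\dim B^{k-1}=\binom{n-1}{k-1}$ from the first part, whence $\dim\im\delta_{k-1}^\ast=\binom{n}{k}-\binom{n-1}{k-1}=\binom{n-1}{k}$. On $K_n^k$ every $(k-1)$-face has the same degree $n-k$, so $\delta_{k-1}^\ast e_F$ is a fixed nonzero scalar multiple of $\partial_k e_F$, and independence of $\{\delta_{k-1}^\ast e_F:1\in F\in\binom{[n]}{k+1}\}$ reduces to that of the $\partial_k e_F$. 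This is the mirror image of the previous step: from $\partial_k h=0$ with $h=\sum_{1\in F}c_Fe_F$ I would evaluate on the $k$-face $F_0:=F\setminus\{1\}$, where the only $(k+1)$-face $F'\supset F_0$ with $1\in F'$ is $F'=F$, giving $[F:F_0]\,c_F=0$ and hence $c_F=0$; with $\binom{n-1}{k}$ independent vectors in a space of that dimension, we again obtain a basis. The main obstacle here is not conceptual but a matter of bookkeeping: one must choose the distinguished test face so that exactly one term of the (co)boundary survives and check that the surviving incidence number is $\pm1$ rather than $0$. The only genuine external input is the acyclicity of the complete skeleton below its top dimension, which I would simply cite.
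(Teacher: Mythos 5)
Your proof is correct. There is nothing in the paper to compare it against: the authors state this lemma as ``well-known (and easily verifiable)'' and give no proof, so your argument supplies precisely the verification they leave to the reader. Both halves are sound — the dimension counts (the recursion $b_{i+1}=\binom{n}{i+1}-b_i$ from exactness of the complete $(k-1)$-skeleton below its top dimension, and the Hodge decomposition \eqref{eqn:hodge-decomp} with $\mathcal{H}_{k-1}(K_n^k)\cong H^{k-1}(K_n^k;\R)=0$ for the second half), and the independence checks, where evaluating $\delta_{k-2}g$ at $\{1\}\cup F$ isolates the single $(k-2)$-face of that simplex avoiding vertex $1$, and dually evaluating $\partial_k h$ at $F\setminus\{1\}$ isolates the single cofacet containing $1$, each surviving with incidence number $\pm 1$. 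You also correctly dispose of the one genuine subtlety, namely that $\delta_{k-1}^\ast$ depends on the choice of weights: on $K_n^k$ all $(k-1)$-faces have degree $n-k$, so $\delta_{k-1}^\ast e_F$ is a fixed nonzero multiple of $\partial_k e_F$ and the span and independence questions are unaffected.
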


\begin{lemma}\label{EigenvaluesCompleteComplex}
 The eigenvalues of the combinatorial Laplacian $L^\up_{k-1}(K_n^k)$ are $0$ with multiplicity $\binom{n-1}{k-1}$ and $n$ with multiplicity $\binom{n-1}{k}$.
 The normalized Laplacian $\Lap^\up_{k-1}(K_n^k)$ has eigenvalues $0$ with multiplicity $\binom{n-1}{k-1}$ and $\frac{n}{n-k}$ with multiplicity $\binom{n-1}{k}$.
 The eigenvalues of $A_{k-1}(K_n^k)$ are $n-k$ with multiplicity $\binom{n-1}{k-1}$ and $-k$ with multiplicity $\binom{n-1}{k}$. 
\end{lemma}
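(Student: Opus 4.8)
The plan is to reduce everything to the combinatorial up-Laplacian $L^\up_{k-1}$, exploiting that $K_n^k$ is \emph{regular}: every $(k-1)$-face $F$ (a $k$-element set) lies in exactly $n-k$ faces of dimension $k$, so $\deg(F)=n-k$ and hence $D_{k-1}=(n-k)I$. Because of this, the three matrices are related by the scalar identities $A_{k-1}=(n-k)I-L^\up_{k-1}$ and $\Lap^\up_{k-1}=D_{k-1}^{-1}L^\up_{k-1}=\tfrac{1}{n-k}L^\up_{k-1}$ (the relations $L^\up_{k-1}=D_{k-1}-A_{k-1}$ and $\Lap^\up_{k-1}=D_{k-1}^{-1}L^\up_{k-1}$ were established earlier). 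Thus, once the spectrum of $L^\up_{k-1}$ is known, the eigenvalues $0$ and $n$ map to $n-k$ and $-k$ for $A_{k-1}$, and to $0$ and $\tfrac{n}{n-k}$ for $\Lap^\up_{k-1}$, with multiplicities unchanged; so it suffices to show that $L^\up_{k-1}(K_n^k)$ has eigenvalue $0$ with multiplicity $\binom{n-1}{k-1}$ and eigenvalue $n$ with multiplicity $\binom{n-1}{k}$.

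The crux is to compute the full combinatorial Laplacian $L_{k-1}=L^\up_{k-1}+L^\down_{k-1}$ and to show that $L_{k-1}=nI$ on $C^{k-1}(K_n^k;\R)$. The diagonal entry at a $(k-1)$-face $F$ is the sum of its up-degree $n-k$ (from $L^\up_{k-1}=\delta_{k-1}^T\delta_{k-1}$) and its down-degree $k$, the number of $(k-2)$-subfaces (from $L^\down_{k-1}=\delta_{k-2}\delta_{k-2}^T$), giving $n$. For $F\neq G$, an off-diagonal entry is nonzero only when $F$ and $G$ share a common $(k-2)$-face $K=F\cap G$ and $H=F\cup G\in (K_n^k)_k$; in that case $(L^\up_{k-1})_{F,G}=[H:F][H:G]$ and $(L^\down_{k-1})_{F,G}=[F:K][G:K]$. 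These cancel: the single instance of $\delta_{k-1}\delta_{k-2}=0$ attached to the pair $(K,H)$, whose only intermediate faces are $F,G$, reads $[H:F][F:K]+[H:G][G:K]=0$, and multiplying through by $[H:G][F:K]$ (and using $[H:G]^2=[F:K]^2=1$) yields $[H:F][H:G]=-[F:K][G:K]$. Hence all off-diagonal entries of $L_{k-1}$ vanish and $L_{k-1}=nI$. This is the main point; the rest is bookkeeping, and I expect this sign cancellation to be the only genuinely nonroutine step.

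From $L_{k-1}=nI$ I then read off the spectrum of $L^\up_{k-1}$. Since $L_{k-1}$ is invertible, $\mathcal{H}_{k-1}=\ker L_{k-1}=0$, so $\tilde H^{k-1}(K_n^k;\R)\cong \mathcal{H}_{k-1}=0$ and therefore $Z^{k-1}=B^{k-1}$. Now $\ker L^\up_{k-1}=\ker\delta_{k-1}=Z^{k-1}=B^{k-1}$, which by Lemma~\ref{BasisBoundariesCoboundaries} has dimension $\binom{n-1}{k-1}$; this accounts for the eigenvalue $0$. On the orthogonal complement $(\ker\delta_{k-1})^\perp=\im\delta_{k-1}^T$ the down-Laplacian vanishes, because any $v=\delta_{k-1}^T w$ satisfies $\delta_{k-2}^T v=(\delta_{k-1}\delta_{k-2})^T w=0$ and hence $L^\down_{k-1}v=\delta_{k-2}\delta_{k-2}^T v=0$. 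Therefore $L^\up_{k-1}=L_{k-1}=nI$ on this subspace, so $n$ is an eigenvalue of multiplicity $\dim\im\delta_{k-1}^T=\binom{n}{k}-\binom{n-1}{k-1}=\binom{n-1}{k}$. Transferring through the scalar relations of the first paragraph then gives the claimed spectra of $\Lap^\up_{k-1}$ and $A_{k-1}$.
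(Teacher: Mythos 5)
Your proof is correct and follows essentially the same route as the paper: reduce to $L^\up_{k-1}$ via $(n-k)$-regularity, use the identity $L^\up_{k-1}(K_n^k)+L^\down_{k-1}(K_n^k)=nI$, and identify the two complementary eigenspaces with a dimension count via Lemma~\ref{BasisBoundariesCoboundaries}. The only difference is presentational: you prove the identity $L_{k-1}=nI$ explicitly (the paper cites it as implicit in Kalai) and you obtain the multiplicity $\binom{n-1}{k}$ by rank--nullity rather than from the basis of $\im\delta_{k-1}^\ast$, which is a harmless variation.
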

\begin{proof}
Because $K_n^k$ is $(n-k)$-regular, it suffices to consider the spectrum of $L^\up_{k-1}(K_n^k)$.
The following equality is contained implicitly in \cite{Kalai} and follows from a straightforward calculation using the matrix representations of the Laplacians: 
\[
 L^\up_{k-1}(K_n^k) + L^\down_{k-1}(K_n^k) = nI. 
\]
 Any non-zero element of $\ker L^\down_{k-1}(K_n^k) = \ker \delta_{k-2}^\ast(K_n^k) = \im \delta_{k-1}^\ast(K_n^k)$ is hence an eigenvector of $L^\up_{k-1}$ with eigenvalue $n$. Naturally, any non-zero element of $\ker L^\up_{k-1}(K_n^k) = Z^{k-1}(K_n^k) = B^{k-1}(K_n^k)$ is an eigenvector of $L^\up_{k-1}$ with eigenvalue $0$.
By Lemma~\ref{BasisBoundariesCoboundaries} $\im \delta_{k-1}^\ast(K_n^k)$ and $B^{k-1}(K_n^k)$ have dimensions $\binom{n-1}{k}$ and $\binom{n-1}{k-1}$, respectively.
As these add up to $\binom{n}{k}$, the dimension of $C^{k-1}(K_n^k)$, we have determined the complete spectrum.
\end{proof}

\section{Garland's Estimate Revisited}
\label{sec:Garland}
In \cite{Garland} Garland studies the normalized Laplacian $\Lap^\up_i(X)$. His main result regards a conjecture of Serre's on the cohomology of certain groups. As a technical lemma, he proves a bound for the nontrivial eigenvalues of $\Lap^\up_i(X)$ in terms of the eigenvalues of the Laplacian on links of lower-dimensional faces (see also \cite{Borel} for a very clear exposition). 

We state the result for the case of $\Lap^\up_{k-1}(X)$ and the links of $(k-2)$-dimensional faces $F \in X_{k-2}$. In this case, $\lk F = \lk(F,X)$ is a graph and the normalized Laplacian $\Lap_{0}^\up(\lk F)$ agrees with the usual normalized graph Laplacian $\Lap(\lk F)$.
Furthermore, we show an analogous result for the generalized adjacency matrix $A_{k-1}(X)$.

For a combinatorial application of Garland's ideas (to clique complexes of graphs) see \cite{AharoniBergerMeshulam:eigenvaluesHomologyFlagComplexes-2005}. Garland's estimate was subsequently further strengthened and extended. In particular, \.{Z}uk \cite{Zuk:1996}
proved that if a $2$-dimensional complex $X$ satisfies $\lambda_2(\Lap(\lk(v,X)))>1/2$ for all vertex links, then the fundamental group of $X$ has \emph{Kazhdan's} \emph{Property} (\emph{T}).

\subsection*{Normalized Laplacian}
\begin{theorem}[\cite{Garland}, see also {\cite[Theorem~1.5,1.6]{Borel}}]\label{Garland}
Let $X$ be a pure $k$-dimensional complex and let $\Lap_{k-1}^\up = \Lap_{k-1}^\up(X)$ be its normalized Laplacian. Denote by $\langle,\rangle$ the weighted inner product on  $C^{k-1}(X;\R)$ that is defined by $\langle f,g\rangle=\sum_{F\in X_{k-1} }\deg(F)f(F)g(F)$.
Assume that for all $F \in X_{k-2}$ $$\lambda_{\min} \leq \lambda_2(\Lap(\lk F)) \leq \lambda_{n-k+1}(\Lap(\lk F)) \leq \lambda_{\max}.$$
 Then for all $f \in \orcomp{k-1}{X}$ (where the orthogonal complement is taken with respect to $\langle,\rangle$)
\[
 (1 + k\lambda_{\min} - k) \langle f,f \rangle  \leq  \langle \Lap_{k-1}^\up f,f \rangle \leq (1 + k\lambda_{\max} - k) \langle f,f \rangle.
\]
Hence, all nontrivial eigenvalues of $\Lap_{k-1}^\up$ on $\orcomp{k-1}{X}$ lie in $[1 +  k\lambda_{\min} - k, 1 + k\lambda_{\max} - k].$
\end{theorem}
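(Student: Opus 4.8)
The plan is to use Garland's local-to-global principle: decompose the global quadratic form of $\Lap_{k-1}^\up$ into contributions coming from the vertex links of the $(k-2)$-faces, each of which is governed by exactly the normalized graph Laplacian $\Lap(\lk F)$ appearing in the hypothesis. For a $(k-2)$-face $F$, the vertices of $\lk F$ are the $v$ with $F\cup\{v\}\in X_{k-1}$, its edges are the $\{u,v\}$ with $F\cup\{u,v\}\in X_k$, and the link-degree satisfies $\deg_{\lk F}(v)=\deg(F\cup\{v\})$. First I would \emph{localize} a cochain $f\in C^{k-1}(X;\R)$ to a $0$-cochain $f_F\in C^0(\lk F;\R)$ on each link by setting $f_F(v):=[F\cup\{v\}:F]\,f(F\cup\{v\})$; with this sign convention the link's weighted inner product $\langle f_F,f_F\rangle_{\lk F}=\sum_v\deg_{\lk F}(v)f_F(v)^2$ is literally a portion of the global one. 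The whole argument then rests on two identities.

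The first is a counting identity: since every $(k-1)$-face contains exactly $k$ faces of dimension $k-2$, summing the local norms gives $\sum_{F\in X_{k-2}}\langle f_F,f_F\rangle_{\lk F}=k\langle f,f\rangle$. The second, and genuinely delicate, is the quadratic-form identity
\[
\sum_{F\in X_{k-2}}\langle \Lap(\lk F)\,f_F,f_F\rangle_{\lk F}=\langle \Lap_{k-1}^\up f,f\rangle+(k-1)\langle f,f\rangle.
\]
To prove it I would use that for a graph $\langle \Lap(\lk F)f_F,f_F\rangle_{\lk F}=\sum_{\{u,v\}\in\lk F}(f_F(u)-f_F(v))^2$, and regroup the double sum over all links by the top face $T=F\cup\{u,v\}$. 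For a fixed $k$-face $T=\{w_0<\dots<w_k\}$, each pair $\{w_a,w_b\}$ gives one edge in the link of $F=T\setminus\{w_a,w_b\}$; the incidence bookkeeping yields $f_F(w_a)=(-1)^a f(T\setminus w_b)$ and $f_F(w_b)=(-1)^{b-1}f(T\setminus w_a)$, so each squared difference expands into diagonal terms plus a cross term $2(-1)^{a+b}f(T\setminus w_a)f(T\setminus w_b)$. Summing over pairs, the diagonal terms produce $(k-1)\sum_i f(T\setminus w_i)^2$, while the cross terms reassemble exactly into $(\delta_{k-1}f)(T)^2=\big(\sum_i(-1)^i f(T\setminus w_i)\big)^2$ minus the single-sum square; a final summation over $T$ gives the displayed identity. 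The hard part will be this sign reconciliation, i.e.\ checking that the localized differences really glue back into the global coboundary $\|\delta_{k-1}f\|^2=\langle\Lap_{k-1}^\up f,f\rangle$.

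The remaining structural ingredient is to translate the membership condition $f\in\orcomp{k-1}{X}=\ker\delta_{k-2}^\ast$ into a statement about the links. Writing out the adjoint with the normalized weight gives $(\delta_{k-2}^\ast f)(F)=\tfrac{1}{\deg F}\sum_{v\in\lk F}\deg_{\lk F}(v)\,f_F(v)$, so $\delta_{k-2}^\ast f=0$ holds if and only if $\langle f_F,\1\rangle_{\lk F}=0$ for every $(k-2)$-face $F$; that is, the global orthogonality to coboundaries is equivalent to each localization $f_F$ being orthogonal to the constants, which are precisely the trivial eigenvector of $\Lap(\lk F)$.

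Finally I would assemble the pieces. For $f\in\orcomp{k-1}{X}$, each $f_F$ is orthogonal to $\1$, and since $\1$ spans the minimal-eigenvalue direction of $\Lap(\lk F)$, the Rayleigh quotient of $f_F$ lies between $\lambda_2$ and the top eigenvalue $\lambda_{n-k+1}$ of $\Lap(\lk F)$; by hypothesis this forces $\lambda_{\min}\langle f_F,f_F\rangle_{\lk F}\le\langle\Lap(\lk F)f_F,f_F\rangle_{\lk F}\le\lambda_{\max}\langle f_F,f_F\rangle_{\lk F}$. Summing over $F$, substituting the two identities, and using $\sum_F\langle f_F,f_F\rangle_{\lk F}=k\langle f,f\rangle$ yields $(1+k\lambda_{\min}-k)\langle f,f\rangle\le\langle\Lap_{k-1}^\up f,f\rangle\le(1+k\lambda_{\max}-k)\langle f,f\rangle$, which is the asserted quadratic-form bound. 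The eigenvalue statement then follows from the Courant--Fischer characterization (Theorem~\ref{CourantFischer}), applied to the restriction of $\Lap_{k-1}^\up$ to the invariant subspace $\orcomp{k-1}{X}$ (invariance being immediate from self-adjointness together with $B^{k-1}(X;\R)\subseteq\ker\Lap_{k-1}^\up$).
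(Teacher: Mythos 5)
Your proposal is correct, and its overall architecture is exactly Garland's local-to-global argument used in the paper: localize $f$ to each link via $f_F(v)=[F\cup\{v\}:F]\,f(F\cup\{v\})$, prove the counting identity $\sum_{F\in X_{k-2}}\langle f_F,f_F\rangle_{\lk F}=k\langle f,f\rangle$, prove the quadratic-form identity $\sum_{F}\langle \Lap(\lk F)f_F,f_F\rangle_{\lk F}=\langle \Lap_{k-1}^\up f,f\rangle+(k-1)\langle f,f\rangle$, show that orthogonality to $B^{k-1}(X;\R)$ is equivalent to $f_F\perp\1$ in every link, and finish with the Rayleigh-quotient bounds on each link. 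Where you genuinely differ is in the verification of the middle identity. The paper proves it at the matrix level: it introduces $\Lap_{k-1}^{\up,F}=\rho_F\Lap_{k-1}^\up\rho_F$, shows $\sum_F\Lap_{k-1}^{\up,F}=\Lap_{k-1}^\up+(k-1)I$ by counting how often each matrix entry is picked up (each nonzero off-diagonal entry exactly once, each diagonal entry exactly $k$ times), and identifies each $\Lap_{k-1}^{\up,F}$ with $\Lap(\lk F)$ entrywise up to signs. You instead work purely with quadratic forms, writing each link form as $\sum_{\{u,v\}\in\lk F}(f_F(u)-f_F(v))^2$ and regrouping the double sum by the top face $T=F\cup\{u,v\}$; your sign bookkeeping $f_F(w_a)=(-1)^a f(T\setminus w_b)$, $f_F(w_b)=(-1)^{b-1}f(T\setminus w_a)$ is correct, and per $k$-face the expansion reassembles into $(\delta_{k-1}f)(T)^2+(k-1)\sum_i f(T\setminus w_i)^2$, which sums to the stated identity. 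Both are double-counting arguments of comparable length; the paper's matrix-level splitting has the advantage that the identical device $\rho_F M\rho_F$ is reused for the adjacency-matrix analogue (Proposition~\ref{Prop:Conclusionzz}), while your Dirichlet-form computation is confined to the Laplacian but makes the emergence of $\|\delta_{k-1}f\|^2$ completely transparent without any matrix bookkeeping. Two minor points, neither a gap: when summing the squared differences over pairs, the diagonal terms give $k\sum_i f(T\setminus w_i)^2$ rather than $(k-1)\sum_i f(T\setminus w_i)^2$ --- the factor $k-1$ only appears after the cross terms contribute $(\delta_{k-1}f)(T)^2-\sum_i f(T\setminus w_i)^2$, as your own phrase ``minus the single-sum square'' indicates, so your final identity is unaffected; and in the last step the Rayleigh-quotient bound should be stated for the (possibly zero) vectors $f_F$ as the inequality $\lambda_{\min}\langle f_F,f_F\rangle\le\langle\Lap(\lk F)f_F,f_F\rangle\le\lambda_{\max}\langle f_F,f_F\rangle$, which holds trivially when $f_F=0$ (the paper handles this by restricting the sum to the faces $F$ with $f_F\neq0$).
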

We remark that Garland only states the lower bound. The upper bound follows directly from the proof, which we reproduce here in our notation.
The main idea of the proof is to present the normalized Laplacian as a sum of matrices each of which has non-zero entries only on the link of some $(k-2)$-face. These matrices then correspond to the Laplacians of the links.

For a pure $k$-dimensional simplicial complex $X$, fix a face $F \in X_{k-2}$ of dimension $k-2$. 
Let $\rho_F$ be the diagonal $|X_{k-1}|\times|X_{k-1}|$-matrix defined by
$$(\rho_F)_{G,H} = \begin{cases}
		1 & \text{if } G = H \text{ and } F \subset G,\\
		0& \text{otherwise.}
		\end{cases}
$$
We set $\Lap_{k-1}^{\up,F}(X):=\rho_F\Lap_{k-1}^{\up}(X)\rho_F$ and for $f \in C^{k-1}(X)$ furthermore define $f_F \in C^0(\lk F)$ by $f_F(\{u\}) = [F \cup \{u\}:F]f(F \cup \{u\}).$

\begin{lemma}\label{GarlandLemma}
Let $X$ be a pure $k$-dimensional complex.
\begin{itemize}
\item[a)] $\sum_{F \in X_{k-2}}\Lap_{k-1}^{\up,F}(X) = \Lap_{k-1}^\up(X) + (k-1)I$. 
\item[b)] For $u, v \in V(\lk F)$ let $F_u=F \cup \{u\}$ and $F_v=F \cup \{v\}$. Then
$(\Lap_{k-1}^{\up,F}(X))_{F _u,F_v} = [F_u:F][F_v:F](\Lap(\lk F))_{u,v}.$
So, for $f \in C^{k-1}(X)$, $\langle\Lap_{k-1}^{\up,F}(X)f,f\rangle= \langle\Lap(\lk F)f_F,f_F\rangle.$
\item[c)] If $f \in \orcomp{k-1}{X}$ then $f_F \in \1^\perp$.
\end{itemize}
\end{lemma}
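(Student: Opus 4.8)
The plan is to prove the three statements a), b), c) by unwinding the definitions of the operators $\Lap_{k-1}^\up$, $\rho_F$, and the correspondence $f \mapsto f_F$, reducing everything to a bookkeeping of incidence numbers. I expect parts b) and c) to be the substantive ones, with a) following from a counting argument over the off-diagonal entries.

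\medskip

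For part b), I would start from the matrix representation $\Lap_{k-1}^\up = D_{k-1}^{-1}\delta_{k-1}^T W_k \delta_{k-1}$ (with $W_k = I$), so that for two $(k-1)$-faces $F_u = F\cup\{u\}$ and $F_v = F\cup\{v\}$ the entry $(\Lap_{k-1}^\up)_{F_u,F_v}$ is a sum over $k$-faces $H$ containing both of $\tfrac{1}{\deg(F_u)}[H:F_u][H:F_v]$. The key point is that conjugating by $\rho_F$ restricts attention exactly to those $(k-1)$-faces containing $F$, i.e.\ to faces of the form $F\cup\{u\}$ for $u \in V(\lk F)$, so $(\Lap_{k-1}^{\up,F})_{F_u,F_v}$ picks out precisely the contribution coming from $k$-faces $H = F\cup\{u,v\}$ that lie over the edge $\{u,v\}$ of $\lk F$. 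I would then compare this with the entry $(\Lap(\lk F))_{u,v}$ of the normalized graph Laplacian of the link, using that $\deg_X(F_u)$ equals the degree of $u$ in the graph $\lk F$. The only delicate part is tracking the sign: the incidence number $[H:F_u][H:F_v]$ must be shown to equal $[F_u:F][F_v:F]$ times the corresponding graph-incidence sign $-1$ on the edge $\{u,v\}$, which is a purely local computation with the fixed vertex ordering (a cocycle-type identity $[H:F_u][F_u:F] = \pm[H:F_v][F_v:F]$). Once the entrywise identity holds, the quadratic-form statement $\langle \Lap_{k-1}^{\up,F}f,f\rangle = \langle \Lap(\lk F)f_F,f_F\rangle$ follows by expanding both sides in the relevant bases and matching terms, with the sign factors $[F_u:F][F_v:F]$ being absorbed exactly by the definition $f_F(u) = [F_u:F]f(F_u)$, and with the weights matching because $\deg_X(F_u)$ is the vertex-weight of $u$ in the weighted inner product on $C^0(\lk F)$.

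\medskip

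For part a), I would sum the identity from b) over all $F\in X_{k-2}$. On the diagonal, each $(k-1)$-face $G$ contains exactly $k$ faces $F$ of dimension $k-2$, so $G$ is hit by $\rho_F$ for exactly $k$ values of $F$; since the diagonal entry of $\Lap_{k-1}^\up$ is $1$ for the normalized up-Laplacian, this accounts for the $+(k-1)I$ correction (one of the $k$ copies is the ``genuine'' diagonal of $\Lap_{k-1}^\up$ and the remaining $k-1$ form the identity shift). Off the diagonal, two distinct $(k-1)$-faces $F_u, F_v$ share a unique common $(k-2)$-face when they differ in a single vertex, so each off-diagonal entry of $\Lap_{k-1}^\up$ arises from exactly one $F$ in the sum; entries for faces sharing no common $(k-2)$-face are zero on both sides. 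Assembling the diagonal and off-diagonal contributions gives the claimed operator identity.

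\medskip

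For part c), I would use the characterization of the trivial part of the spectrum, namely $B^{k-1}(X;\R) = \im\delta_{k-2}$, and the fact that $f \perp B^{k-1}(X;\R)$ means $\langle f, \delta_{k-2}e_F\rangle = 0$ for every $F\in X_{k-2}$. I would compute $\langle f, \delta_{k-2}e_F\rangle$ directly: since $(\delta_{k-2}e_F)(G) = [G:F]$ is supported on the $(k-1)$-faces $G$ containing $F$, i.e.\ on the $G = F\cup\{u\}$ with $u\in V(\lk F)$, the orthogonality condition becomes $\sum_{u} \deg(F_u)\,[F_u:F]\,f(F_u) = 0$, which is exactly $\sum_u \deg_{\lk F}(u)\, f_F(u) = \langle f_F, \1\rangle = 0$ in the weighted inner product on $C^0(\lk F)$. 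Hence $f_F \perp \1$, as required. The main obstacle throughout is the careful sign bookkeeping in b); once that local identity is pinned down, parts a) and c) are essentially counting and a reinterpretation of the orthogonality condition.
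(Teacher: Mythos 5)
Your proposal is correct and takes essentially the same route as the paper's proof: the same entrywise identification for b) (resting on $\deg_X(F_u)=\deg_{\lk F}(u)$ and the sign identity $[F_{u,v}:F_u][F_{u,v}:F_v]=-[F_u:F][F_v:F]$, with the signs absorbed into $f_F$ for the quadratic-form statement), the same counting for a) (each nonzero off-diagonal entry of $\Lap_{k-1}^\up$ lies in exactly one summand $\Lap_{k-1}^{\up,F}$, each diagonal entry $1$ in exactly $k$ of them), and the same rewriting of $\langle f,\delta_{k-2}e_F\rangle=0$ as $\langle f_F,\1\rangle=0$ for c). No gaps beyond the level of detail the paper itself leaves to "straightforward calculation."
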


\begin{proof}
\begin{itemize}
\item[a)] Observe that $\Lap_{k-1}^{\up,F}(X)$ is obtained by replacing by $0$ all entries of $\Lap_{k-1}^\up(X)$  that are contained in a row or column corresponding to some $G$ with $F\nsubseteq G$. The non-zero entries of $\Lap_{k-1}^\up(X)$ lie on the diagonal or correspond to faces $G, H \in X_{k-1}$ that share a common $(k-2)$-face and for which $G \cup H \in X_k$.
Hence, every non-zero entry $(\Lap_{k-1}^\up(X))_{G,H}$ with $G \neq H$ is contained in exactly one summand and the diagonal entries, which are $1$, are each contained in exactly $k$ summands.

\item[b)]
First consider $u \neq v$ with $F \cup \{u,v\} \in X$. 
Straightforward calculations show that $\deg_X(F_u)= \deg_{\lk F}(u)$ and that furthermore 
$[F_{u,v}:F_u][F_{u,v}:F _v] = -[F_u:F][F_v:F]$ where $F_{u,v}$ stands for $F\cup\{u,v\}$.
Hence,
\[
(\Lap_{k-1}^{\up,F}(X))_{F_u,F_v} = \frac{[F_{u,v}:F_u][F_{u,v}:F_v]}{\deg_X(F_u)}
  = -  \frac{[F_u:F][F_v:F]}{\deg_{\lk F}(u)}
 = [F_u:F][F_v:F](\Lap(\lk F))_{u,v}.
\]
If $F \cup \{u,v\} \notin X$, the corresponding entry is $0$ in both matrices.
For the diagonal entries we get
$$
(\Lap_{k-1}^{\up,F}(X))_{F_u,F_u} = 1 = [F_u:F][F_u:F]  \Lap(\lk F)_{u,u}.
$$

\item[c)] Let $f \in \orcomp{k-1}{X}$. Then 
$\sum_{G \in X_{k-1}}\deg(G)f(G)[G:F] = \langle f , \delta_{k-2}e_F \rangle = 0$
and therefore
\[
\langle f_F,\1\rangle = \sum_{v\in V(\lk F)}\deg_{\lk F}(v) f_F(\{v\}) =
\sum_{v\in V(\lk F)}\deg(F_v) [F_v:F]f(F_v) =0.
\]
\end{itemize}
\end{proof}

The statements of Lemma~\ref{GarlandLemma} can easily be combined to prove Garland's estimate:
\begin{proof}[Proof of Theorem~\ref{Garland}]
Let $f \in \orcomp{k-1}{X}$. Then
$$\langle \sum\nolimits_{F \in X_{k-2}}\Lap_{k-1}^{\up,F}(X)f,f \rangle =  \sum\nolimits_{F \in \mathcal{F}_f} \langle \Lap(\lk F)f_F,f_F \rangle,$$
where $\mathcal{F}_f = \{F \in X_{k-2} | F \subset G \text{ for some } G \text{ with } f(G)\neq 0\}$.
Now, since $f \in \orcomp{k-1}{X}$, we have $f_F \in \1^\perp$ and $f_F \neq 0$ for $F \in \mathcal{F}_f$. 
As furthermore $\sum_{F \in \mathcal{F}_f}\langle f_F, f_F \rangle =  k \langle f,f \rangle$,
$$k \lambda_{\min} \langle f,f \rangle \leq \langle \sum\nolimits_{F \in X_{k-2}}\Lap_{k-1}^{\up,F}(X)f,f \rangle \leq  k \lambda_{\max} \langle f,f \rangle.$$
By Lemma~\ref{GarlandLemma} we have furthermore
$$\langle \Lap_{k-1}^\up(X) f,f \rangle  = \langle\sum\nolimits_{F \in X_{k-2}}\Lap_{k-1}^{\up,F}(X) f,f \rangle - (k-1)\langle f,f \rangle,$$
which concludes the proof.
\end{proof}

\subsection*{Adjacency Matrix}
We now turn to the generalized adjacency matrix $A_{k-1}(X)$. The same methods as above can be applied to achieve a result of similar nature (Proposition~\ref{Prop:Conclusionzz}). However, this only enables us to cover vectors from $\orcomp{k-1}{X}$. Controlling the behaviour on this space sufficed for the normalized Laplacian, where $B^{k-1}(X)$ is always a subspace of the eigenspace of zero.  For the generalized adjacency matrix we know much less about its eigenspaces, in particular we do not know of any trivial eigenvalues.

This is analogous to the situation for graphs, where $\1$, the all-ones vector, which is known to be the first eigenvector of the Laplacian (with eigenvalue $0$), is not necessarily an eigenvector of the adjacency matrix.
In \cite{Feige:2005hp} Feige and Ofek, considering the adjacency matrix of random graphs $G(n,p)$, show that for  $p$ large enough the first eigenvector can in some sense be replaced by $\1$.
Following their strategy, we show that controlling the behaviour of the generalized adjacency matrix $A_{k-1}(X)$ on the two spaces $B^{k-1}(X)$ and $\orcomp{k-1}{X}$ suffices to give concentration results for the spectrum of $A_{k-1}(X)$.

The results of this section together will yield the following theorem which can be considered as an analogue of Garland's Theorem~\ref{Garland} for the generalized adjacency matrix $A_{k-1}(X)$.

\begin{theorem}\label{GarlandAdjacency}
\renewcommand{\labelenumi}{(\roman{enumi})} 
\renewcommand{\theenumi}{(\roman{enumi})} 
Let $X$ be a $k$-dimensional simplicial complex with $n$ vertices and complete $(k-1)$-skeleton and let $A_{k-1}=A_{k-1}(X)$ be its generalized adjacency matrix. Fix a positive value $d$ and let $u = (1/\sqrt{n-k+1})\1$. Suppose that we have for all $F \in X_{k-2}$:
\begin{enumerate}
\item $|\langle A(\lk F)u,u \rangle - d| \leq f(n)$,
\item $|\langle A(\lk F)u,w \rangle| \leq g(n)$ for all $w \bot \1$ with $\|w\|=1$ and
\item $|\langle A(\lk F)w,w \rangle| \leq h(n)$ for all $w \bot \1$ with $\|w\|=1$.
\end{enumerate}
Let $\varphi(n) = f(n) +g(n) + h(n)$. Then:
\renewcommand{\labelenumi}{(\alph{enumi})} 
\renewcommand{\theenumi}{(\alph{enumi})} 
\begin{enumerate}
\item $|\langle A_{k-1}b,b \rangle - d| \leq k \cdot \varphi(n)$ for all $b\in B^{k-1}(X)$ with $\|b\|=1$,\label{Conclusionbb}
\item $|\langle A_{k-1}b,z \rangle| \leq k \cdot \varphi(n)$ for all $z \in\orcomp{k-1}{X}$ and $b\in B^{k-1}(X)$ with $\|b\|=\|z\|=1$ and\label{Conclusionbz}
\item $|\langle A_{k-1}z,z \rangle| \leq k \cdot h(n)$ for all $z \in \orcomp{k-1}{X}$ with $\|z\|=1$.\label{Conclusionzz}
\end{enumerate}
Hence, the largest $\binom{n-1}{k-1}$ eigenvalues of $A_{k-1}$ lie in the interval $[d-k\varphi(n),d+2k\varphi(n)+kh(n)]$, and the remaining $\binom{n-1}{k}$ eigenvalues lie in the interval $[-k(\varphi(n)+h(n)),kh(n)]$.
\end{theorem}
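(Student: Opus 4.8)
The plan is to follow the route indicated in the text: work in the standard inner product $\langle f,g\rangle=\sum_{F\in X_{k-1}}f(F)g(F)$, with respect to which $A_{k-1}$ is a symmetric matrix, and control its quadratic form separately on the orthogonal pieces $B^{k-1}(X)$ and $\orcomp{k-1}{X}$ of $C^{k-1}(X;\R)$. By Lemma~\ref{BasisBoundariesCoboundaries} these have dimensions $m:=\binom{n-1}{k-1}$ and $\binom{n}{k}-m=\binom{n-1}{k}$, matching the two eigenvalue counts in the conclusion. Once the estimates (a)--(c) are in hand, the localization of the spectrum is a short application of the Courant--Fischer Theorem~\ref{CourantFischer}.

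First I would establish (a)--(c) by an adjacency analogue of Garland's Lemma~\ref{GarlandLemma}. Setting $A^F:=\rho_F A_{k-1}\rho_F$, the facts that $A_{k-1}$ has zero diagonal and that every nonzero off-diagonal entry $(A_{k-1})_{G,H}$ is carried by the unique common $(k-2)$-face $G\cap H$ give $\sum_{F\in X_{k-2}}A^F=A_{k-1}$, with no $(k-1)I$ correction (in contrast to the Laplacian). The same sign computation as in Lemma~\ref{GarlandLemma}(b), with $f_F(\{u\})=[F\cup\{u\}:F]f(F\cup\{u\})$, then yields $\langle A_{k-1}f,f\rangle=\sum_F\langle A(\lk F)f_F,f_F\rangle$ together with $\sum_F\|f_F\|^2=k\|f\|^2$ (each $(k-1)$-face has $k$ sub-$(k-2)$-faces), while $\langle f_F,\1\rangle=\langle f,\delta_{k-2}e_F\rangle$ shows that $f_F\perp\1$ for all $F$ exactly when $f\perp B^{k-1}(X)$. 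Conclusion (c) is then immediate: for $z\in\orcomp{k-1}{X}$ each $z_F\perp\1$, so hypothesis (iii) and the weight identity give $|\langle A_{k-1}z,z\rangle|\le h(n)\sum_F\|z_F\|^2=k\,h(n)$. For (a) and (b), I would decompose $b_F=\alpha_F u+w_F$ with $w_F\perp\1$ and expand $\langle A(\lk F)b_F,b_F\rangle$ into the three terms governed by (i), (ii), (iii); the key input is $\sum_F\alpha_F^2=\tfrac{1}{n-k+1}\|\partial_{k-1}b\|^2=\tfrac{n}{n-k+1}\|b\|^2$, which follows from $L_{k-1}^\down b=n\,b$ on $B^{k-1}(X)$ --- a consequence of $L_{k-1}^\up+L_{k-1}^\down=nI$ on the complete $(k-1)$-skeleton together with $B^{k-1}(X)\subseteq\ker L_{k-1}^\up$. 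Summing and bounding the cross terms by Cauchy--Schwarz gives (a), and pairing the expansion of $b_F$ against a $z_F\perp\1$ (so that the (i)-term drops out) gives (b).

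Granting (a)--(c), I would read off the spectrum as follows. Order the eigenvalues $\mu_1\ge\cdots\ge\mu_N$, $N=\binom{n}{k}$. Since $B^{k-1}(X)$ is an $m$-dimensional subspace on which the Rayleigh quotient is $\ge d-k\varphi(n)$ by (a), the max--min characterization gives $\mu_m\ge d-k\varphi(n)$; since on its complement $\orcomp{k-1}{X}$ the Rayleigh quotient is $\le k\,h(n)$ by (c), the min--max characterization gives $\mu_{m+1}\le k\,h(n)$. For the extreme eigenvalues I would write a unit vector as $x=b+z$ with $b\in B^{k-1}(X)$, $z\in\orcomp{k-1}{X}$, and expand $\langle A_{k-1}x,x\rangle=\langle A_{k-1}b,b\rangle+2\langle A_{k-1}b,z\rangle+\langle A_{k-1}z,z\rangle$; bounding the three pieces by (a), (b), (c) and using $2\|b\|\,\|z\|\le\|b\|^2+\|z\|^2=1$ (and $d\ge k\varphi(n)$ for the lower bound) yields $-k(\varphi(n)+h(n))\le\langle A_{k-1}x,x\rangle\le d+2k\varphi(n)+k\,h(n)$, hence $\mu_N\ge-k(\varphi(n)+h(n))$ and $\mu_1\le d+2k\varphi(n)+k\,h(n)$. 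These four inequalities put the top $m$ eigenvalues into $[d-k\varphi(n),\,d+2k\varphi(n)+k\,h(n)]$ and the remaining $\binom{n-1}{k}$ into $[-k(\varphi(n)+h(n)),\,k\,h(n)]$.

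The hard part will be (a) and (b), i.e.\ controlling $A_{k-1}$ on $B^{k-1}(X)$. In Garland's argument this space is simply the trivial kernel of the Laplacian and can be discarded, whereas $A_{k-1}$ has no such trivial eigenspace, so one must genuinely track the $\1$-components $\alpha_F$ of the link restrictions and check that they aggregate to the leading value $d$ (up to the $\tfrac{n}{n-k+1}$ factor, which is absorbed into the error term in the regime $d\gg k\varphi(n)$ of interest). Verifying the sign bookkeeping in the adjacency analogue of Lemma~\ref{GarlandLemma}(b) and the identity $L_{k-1}^\down b=n\,b$ on $B^{k-1}(X)$ is the technical crux; the extremal-eigenvalue estimates and the min--max step are then routine.
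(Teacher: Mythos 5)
Your handling of conclusion (c) and of the final spectral localization is correct and coincides with the paper's own argument: (c) is exactly Proposition~\ref{Prop:Conclusionzz} (the decomposition $A_{k-1}=\sum_{F}\rho_F A_{k-1}\rho_F$ with no $(k-1)I$ correction, the sign identity $\langle \rho_FA_{k-1}\rho_F z,z\rangle=\langle A(\lk F)z_F,z_F\rangle$, and $\sum_F\|z_F\|^2=k\|z\|^2$), and your Courant--Fischer step is the paper's Lemma~\ref{LEMMAConditions} with $\U=B^{k-1}(X)$, including the same caveat that the lower bound needs $d\ge k\varphi(n)$. Your identity $\sum_F\alpha_F^2=\frac{n}{n-k+1}\|b\|^2$, via $L^\down_{k-1}b=nb$ on $B^{k-1}(X)$, is also correct (and is in fact sharper than the Cauchy--Schwarz estimate $\sum_F h_b(F)^2\le k(n-k+1)\|b\|^2$ that the paper uses in Lemma~\ref{ReducingToLinks}). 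Your route for (b) also goes through, since there the quantity $d$ never enters.

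The gap is in (a), and it is precisely the point you wave away at the end. In your expansion, hypothesis (i) controls the leading term only as $\sum_F\alpha_F^2\langle A(\lk F)u,u\rangle=\frac{n}{n-k+1}\left(d\pm f(n)\right)$, so your term-by-term estimate for $|\langle A_{k-1}b,b\rangle-d|$ carries the additive discrepancy $\frac{(k-1)d}{n-k+1}$ on top of errors of size roughly $f(n)+2\sqrt{k-1}\,g(n)+(k-1)h(n)$. Nothing in the hypotheses bounds $\frac{(k-1)d}{n-k+1}$ by $O(\varphi(n))$: the theorem is a deterministic statement for an arbitrary positive $d$, and ``the regime $d\gg k\varphi(n)$ of interest'' is not one of its hypotheses (nor would it suffice; absorption needs $\varphi(n)\to\infty$ or $d=o\big(n\varphi(n)/k\big)$). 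The complete complex makes the loss concrete: there $d=n-k$, $f=g=0$, $h=1$, the discrepancy and the $w_F$-term are each $\approx k-1$, and since (iii) only bounds the $w_F$-term in absolute value, your estimate gives $\approx 2(k-1)$, which exceeds the claimed $k\varphi(n)=k$ for every $k\ge 3$ --- the true value $\langle A_{k-1}b,b\rangle=n-k$ is recovered only by an exact cancellation that the hypotheses alone do not provide. Trying to bound $d$ via the trace of $A(\lk F)$ (giving $d\le f+g+(n-k)h$) does not help: it converts the discrepancy into another $(k-1)h(n)$, for a total coefficient $2(k-1)>k$ on $h$. This is why the paper never runs the link decomposition on $B^{k-1}(X)$ (it explicitly warns that this fails there). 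Instead it uses that coboundaries are cocycles of the complete skeleton, so $A_{k-1}b=D_{k-1}b=db+Eb$ with $E=D_{k-1}-dI$ diagonal, controls $\|Eb\|$ for all $b\in B^{k-1}(X)$ by reducing to the quantities $\|E\delta e_F\|$ (Proposition~\ref{Prop:ReducingToLinks}), and converts hypotheses (i)--(iii) into the degree-concentration bound $\|E(\lk F)\1\|\le\varphi(n)\sqrt{n-k+1}$ via $\big\|\tfrac{d}{n-k+1}J-A(\lk F)\big\|\le\varphi(n)$. To repair your proof you would have to replace your bound on the leading term by such a degree argument (at which point you have essentially reproduced the paper's proof), or else settle for a weaker constant together with an additional hypothesis relating $d$, $n$ and $\varphi(n)$.
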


The following lemma explains the connection of Conclusions \ref{Conclusionbb}, \ref{Conclusionbz} and \ref{Conclusionzz} with the spectrum of $A_{k-1}(X)$. It is a generalization of \cite[Lemma~2.1]{Feige:2005hp}, which gives the a corresponding statement for graphs and deals with a single vector $u$, here replaced by the subspace $\U$, and is then used with $u= \frac{1}{\sqrt{n}}\1$. We will use $\U = B^{k-1}(X)$.
Note that $B^{k-1}(X) = B^{k-1}(K_n^k)$ if $X$ has a complete $(k-1)$-skeleton.

\begin{lemma}\label{LEMMAConditions}
\renewcommand{\labelenumi}{(\roman{enumi})} 
\renewcommand{\theenumi}{(\roman{enumi})} 
Let $X$ be a $k$-dimensional simplicial complex with $n$ vertices and complete $(k-1)$-skeleton, let $A_{k-1}=A_{k-1}(X)$ be its generalized adjacency matrix and let $\U$ be an $\binom{n-1}{k-1}$-di\-men\-sion\-al subspace of $C^{k-1}(X)$.
Suppose we have:
\begin{enumerate}
\item $0 \leq f_1(n) \leq \langle A_{k-1}b,b \rangle \leq f_2(n)$ for all $b\in \U$ with $\|b\|=1$,\label{Assumptionbb}
\item $|\langle A_{k-1}b,z \rangle| \leq g(n)$ for all $z \in \U^\bot$ and $b\in \U$ with $\|b\|=\|z\|=1$ and\label{Assumptionbz}
\item $|\langle A_{k-1}z,z \rangle| \leq h(n)$ for all $z \in \U^\bot$ with $\|z\|=1$.\label{Assumptionzz}
\end{enumerate}
Then the largest $\binom{n-1}{k-1}$ eigenvalues of $A_{k-1}$ lie in the interval $[f_1(n),f_2(n)+g(n)+h(n)]$, and the remaining $\binom{n-1}{k}$ eigenvalues lie in the interval $[-(g(n)+h(n)),h(n)]$.
\end{lemma}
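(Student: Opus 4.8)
\textbf{Proof plan for Lemma~\ref{LEMMAConditions}.}

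The plan is to split $C^{k-1}(X)$ as $\U \oplus \U^\perp$ and use the Courant–Fischer characterization (Theorem~\ref{CourantFischer}) together with the three bilinear-form bounds to localize the eigenvalues. Write $N=\binom{n}{k}=\dim C^{k-1}(X)$, so $\dim \U = \binom{n-1}{k-1}$ and $\dim \U^\perp=\binom{n-1}{k}$. Every unit vector $x$ decomposes orthogonally as $x=\alpha b+\beta z$ with $b\in \U$, $z\in \U^\perp$ unit vectors and $\alpha^2+\beta^2=1$. The Rayleigh quotient then expands as
\[
\langle A_{k-1}x,x\rangle = \alpha^2\langle A_{k-1}b,b\rangle + 2\alpha\beta\langle A_{k-1}b,z\rangle + \beta^2\langle A_{k-1}z,z\rangle,
\]
and Assumptions \ref{Assumptionbb}, \ref{Assumptionbz}, \ref{Assumptionzz} bound the three terms by $[\alpha^2 f_1,\alpha^2 f_2]$, $|2\alpha\beta|\,g$, and $|\beta^2|\,h$ respectively. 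Using $|2\alpha\beta|\le 1$ and $\alpha^2,\beta^2\le 1$, this immediately gives the crude global envelope $-(g+h)\le \langle A_{k-1}x,x\rangle\le f_2+g+h$ for every unit $x$, which already pins down the extreme ends of the spectrum.

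To separate the two blocks of eigenvalues I would apply Courant–Fischer twice. For the \emph{lower} bound on the largest $\binom{n-1}{k-1}$ eigenvalues: in the max-min formula for $\lambda_k$ with $k=N-\binom{n-1}{k-1}+1$, choose the $k-1=\binom{n-1}{k}$ test vectors $w_i$ to be an orthonormal basis of $\U^\perp$; then the minimizing $x$ is forced into $\U$ (i.e.\ $\beta=0$, $\alpha=1$), so the Rayleigh quotient is $\langle A_{k-1}b,b\rangle\ge f_1$, showing these eigenvalues are at least $f_1$. Symmetrically, for the \emph{upper} bound on the smallest $\binom{n-1}{k}$ eigenvalues: in the min-max formula for $\lambda_{\binom{n-1}{k}}$, choose the $N-\binom{n-1}{k}=\binom{n-1}{k-1}$ vectors $w_i$ to be an orthonormal basis of $\U$; the maximizing $x$ then lies in $\U^\perp$ (so $\alpha=0$, $\beta=1$), giving Rayleigh quotient $\langle A_{k-1}z,z\rangle\le h$, so these eigenvalues are at most $h$. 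Combining each one-sided Courant–Fischer bound with the corresponding crude envelope from the previous paragraph yields the two advertised intervals $[f_1,f_2+g+h]$ and $[-(g+h),h]$.

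The one genuinely delicate point I expect is making the counting of eigenvalues rigorous, i.e.\ arguing that exactly $\binom{n-1}{k-1}$ eigenvalues (counted with multiplicity) fall in the upper interval and the complementary $\binom{n-1}{k}$ in the lower one, rather than just bounding individual indexed eigenvalues. The envelope bounds alone do not force this split, since the two intervals may overlap when $g,h$ are large; the separation comes precisely from the Courant–Fischer arguments above, which bound $\lambda_{N-\binom{n-1}{k-1}+1}$ from below by $f_1$ and $\lambda_{\binom{n-1}{k}}$ from above by $h$. I would state carefully that ``the largest $\binom{n-1}{k-1}$ eigenvalues'' means $\lambda_N\ge\cdots\ge\lambda_{N-\binom{n-1}{k-1}+1}$, each of which is $\ge f_1$ by the max-min bound and $\le f_2+g+h$ by the envelope, while the remaining indices $\lambda_1,\dots,\lambda_{\binom{n-1}{k}}$ are each $\ge -(g+h)$ by the envelope and $\le h$ by the min-max bound. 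One should double-check the direction conventions (Courant–Fischer as stated orders eigenvalues increasingly, whereas ``largest'' refers to the top end) and verify that the assumption $f_1\ge 0$ is not actually needed for the interval statement but is the natural normalization separating the ``large'' block near $d$ from the ``small'' block near $0$.
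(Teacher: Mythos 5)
Your proof is correct and follows essentially the same route as the paper: the quadratic-form expansion of $\langle A_{k-1}x,x\rangle$ over the orthogonal decomposition $x=\alpha b+\beta z$ gives the global envelope $[-(g(n)+h(n)),\,f_2(n)+g(n)+h(n)]$, and Courant--Fischer with test spaces $\U^\perp$ and $\U$ gives $\mu_{\binom{n-1}{k}+1}\ge f_1(n)$ and $\mu_{\binom{n-1}{k}}\le h(n)$, which is exactly the paper's argument. One correction to your closing remark: the hypothesis $f_1(n)\ge 0$ \emph{is} needed for the stated intervals, since the lower envelope $-(g(n)+h(n))$ relies on $\alpha^2\langle A_{k-1}b,b\rangle\ge \alpha^2 f_1(n)\ge 0$; without it, the smallest $\binom{n-1}{k}$ eigenvalues could only be bounded below by $\min\{f_1(n),0\}-(g(n)+h(n))$.
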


\begin{proof}[Proof of Lemma~\ref{LEMMAConditions}]
Write $A=A_{k-1}$.
Let $v$ be an arbitrary unit vector. Then there are unit vectors $b \in \U$, $z \in \U^\bot$ and $-1\leq \alpha, \beta \leq 1$ such that $v =\alpha b + \beta z$ and $\alpha^2 + \beta^2 =1$.
Because $A$ is symmetric, we get
$$\langle Av,v \rangle = \alpha^2\langle Ab,b\rangle + 2 \alpha\beta \langle Ab,z \rangle + \beta^2\langle Az,z \rangle.$$
Using \ref{Assumptionbb},\ref{Assumptionbz} and \ref{Assumptionzz} as well as $\alpha\beta \leq 1/2$ and $0\leq \alpha, \beta \leq 1$, we can conclude that
$$-g(n)-h(n)\leq \langle Av,v \rangle \leq f_2(n)+g(n)+h(n).$$
Hence, all eigenvalues of $A$ are contained in $[-g(n)-h(n),f_2(n)+g(n)+h(n)]$.
Now, let $\mu_1 \leq \mu_2 \leq \ldots \leq \mu_{\binom{n}{k}}$ be the eigenvalues of $A$.
Applying \ref{Assumptionbb} and \ref{Assumptionzz}  we get
$$\mu_{\binom{n-1}{k}} \leq \max_{z \in \U^\bot, \|z\|=1} \langle Az,z \rangle \leq h(n) \;\;\text{ and }\;\; \mu_{\binom{n-1}{k}+1} \geq \min_{b\in \U, \|b\|=1} \langle Ab,b \rangle \geq f_1(n),$$
by the variational characterization of eigenvalues (Theorem~\ref{CourantFischer}), since $\dim\U^\bot = \binom{n-1}{k}$.
\end{proof}

The proof of Theorem~\ref{GarlandAdjacency} makes up the remainder of this section and is divided into two parts. We first deal with Conclusion \ref{Conclusionzz} and then turn to Conclusions \ref{Conclusionbb} and \ref{Conclusionbz}.

\subsection*{Conclusion \ref{Conclusionzz} - Behaviour on $\orcomp{k-1}{X}$}
We address Conclusion \ref{Conclusionzz} with the same methods that we used to prove Garland's Theorem~\ref{Garland}.
\begin{proposition}\label{Prop:Conclusionzz}
Let $X$ be a $k$-dimensional complex and let $A_{k-1}=A_{k-1}(X)$ be its generalized adjacency matrix. 
Assume that for all $F \in X_{k-2}$ and for all $w \in C^0(\lk F)$ with $w \bot \1$
$$|\langle A(\lk F)w ,w\rangle| \leq h(n) \langle w,w\rangle.$$
 Then for all $z \in \orcomp{k-1}{X}$ (where the orthogonal complement is taken with respect to the standard, non-weighted inner product)
 $$|\langle A_{k-1} z,z \rangle| \leq k \cdot  h(n) \langle z,z \rangle.$$ 
\end{proposition}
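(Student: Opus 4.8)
The plan is to transcribe the proof of Garland's Theorem~\ref{Garland} almost verbatim, localizing the quadratic form of $A_{k-1}$ to the links of $(k-2)$-faces. For each $F\in X_{k-2}$ I would introduce the localized matrix $A_{k-1}^F(X):=\rho_F A_{k-1}(X)\rho_F$, where $\rho_F$ is the same diagonal projector onto $(k-1)$-faces containing $F$ used in Lemma~\ref{GarlandLemma}. The one genuinely simplifying difference from the Laplacian case is that $A_{k-1}$ has \emph{zero diagonal} (since $F\not\sim F$). Thus every nonzero entry $(A_{k-1})_{G,H}$ is off-diagonal with $G\sim H$, and then $G\cap H$ is the unique common $(k-2)$-face of the adjacent pair; since $|G\cap H|=k-1=|F|$, this entry survives in $A_{k-1}^F$ only for $F=G\cap H$, i.e.\ in exactly one summand. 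Hence I would establish the clean decomposition $\sum_{F\in X_{k-2}}A_{k-1}^F(X)=A_{k-1}(X)$, with no additive correction (in contrast to the $+(k-1)I$ appearing for $\Lap_{k-1}^\up$).

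Next I would prove the link-correspondence step. For $u,v\in V(\lk F)$, writing $F_u=F\cup\{u\}$ etc., the second form of the definition of $A_{k-1}$ gives directly $(A_{k-1}^F)_{F_u,F_v}=[F_u:F_u\cap F_v][F_v:F_u\cap F_v]=[F_u:F][F_v:F]$ when $F_u\sim F_v$ (equivalently when $\{u,v\}$ is an edge of $\lk F$), and $0$ otherwise; both sides also vanish on the diagonal. Since $(A(\lk F))_{u,v}$ is $1$ on edges and $0$ otherwise, this reads $(A_{k-1}^F)_{F_u,F_v}=[F_u:F][F_v:F](A(\lk F))_{u,v}$, exactly as in Lemma~\ref{GarlandLemma}b). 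Defining $z_F\in C^0(\lk F)$ by $z_F(u)=[F_u:F]z(F_u)$, the signs cancel and I obtain $\langle A_{k-1}^F z,z\rangle=\langle A(\lk F)z_F,z_F\rangle$. Here both inner products are the \emph{standard, unweighted} ones; this is the reason the proposition is stated with the non-weighted inner product, and why no degree factors enter (unlike Garland, who works with the degree-weighted product throughout).

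For the orthogonality step, if $z\in\orcomp{k-1}{X}=(\im\delta_{k-2})^\perp$ then in particular $0=\langle z,\delta_{k-2}e_F\rangle=\sum_{G\in X_{k-1}}[G:F]z(G)=\sum_{u}[F_u:F]z(F_u)=\sum_u z_F(u)$, so $z_F\perp\1$ and the hypothesis applies to each $z_F$. Combining the three steps yields $\langle A_{k-1}z,z\rangle=\sum_{F\in X_{k-2}}\langle A(\lk F)z_F,z_F\rangle$, whence $|\langle A_{k-1}z,z\rangle|\leq h(n)\sum_{F}\langle z_F,z_F\rangle$. The closing computation is the counting identity $\sum_{F\in X_{k-2}}\langle z_F,z_F\rangle=\sum_F\sum_u z(F_u)^2=k\langle z,z\rangle$, using $[F_u:F]^2=1$ and the fact that each $(k-1)$-face $G$ is counted once for each of its $k$ codimension-one faces $F\subset G$. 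This gives the claimed bound $|\langle A_{k-1}z,z\rangle|\leq k\cdot h(n)\langle z,z\rangle$.

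I do not expect a serious obstacle, as the whole argument is a direct adaptation of Garland's. The only real care needed is in the bookkeeping of inner products: because $A_{k-1}$ and $A(\lk F)$ are unweighted, the correspondence $\langle A_{k-1}^F z,z\rangle=\langle A(\lk F)z_F,z_F\rangle$, the orthogonality $z_F\perp\1$, and the counting identity $\sum_F\langle z_F,z_F\rangle=k\langle z,z\rangle$ must all be verified against the \emph{standard} inner product rather than the degree-weighted one. The two small checks I would perform most carefully are that $A_{k-1}$ has zero diagonal (so the decomposition carries no correction term) and that the sign identity $[F_{u,v}:F_u][F_{u,v}:F_v]=-[F_u:F][F_v:F]$ correctly produces the factor $[F_u:F][F_v:F]$ in the localized entries.
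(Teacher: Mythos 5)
Your proof is correct and follows essentially the same route as the paper's: the same localization $A_{k-1}^F=\rho_F A_{k-1}\rho_F$, the correction-free decomposition $\sum_F A_{k-1}^F=A_{k-1}$, the sign-twisted correspondence $\langle A_{k-1}^F z,z\rangle=\langle A(\lk F)z_F,z_F\rangle$, the orthogonality $z_F\perp\1$, and the counting identity $\sum_F\langle z_F,z_F\rangle=k\langle z,z\rangle$. The paper compresses these steps into ``straightforward calculations''; you have simply supplied the details, all of which check out.
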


\begin{proof}
For any face $F \in X_{k-2}$ set $A_{k-1}^F:=\rho_FA_{k-1}\rho_F$, the matrix obtained from $A_{k-1}$ by replacing all rows and columns corresponding to $(k-1)$-faces not containing $F$ by all-zero rows/columns.
Similar as in Lemma~\ref{GarlandLemma}, straightforward calculations show:
\begin{enumerate}
 \item[a)] $\sum_{F \in X_{k-2}}A_{k-1}^F = A_{k-1}$,
 \item[b)] $(A_{k-1}^F)_{F \cup \{u\},F \cup \{v\}} = [F \cup \{u\}:F][F \cup \{v\}:F]A(\lk F)_{u,v}$ for $F \in X_{k-2}$ and $u, v \in V(\lk F)$ 
 and hence $\langle A_{k-1}^F f,f \rangle = \langle A(\lk F) f_F,f_F \rangle$ for any $f \in C^{k-1}(X)$.
\end{enumerate} 
As $z\in \orcomp{k-1}{X}$ implies $z_F \in \1^\perp$ also with respect to the non-weighted inner product, this proves the proposition:
$$
|\langle A_{k-1} z,z \rangle| = |\sum_{F \in X_{k-2}} \langle A_{k-1}^F z,z \rangle| \leq \sum_{F \in X_{k-2}} |\langle A(\lk F) z_F,z_F \rangle| \leq  k \cdot  h(n) \langle z,z \rangle.
$$
\end{proof}

As explained above, in contrast to the Laplacian, for the adjacency matrix we are also interested in the behaviour on $B^{k-1}(X)$. For this space, we can not apply a proof similar to the one above because $f \in B^{k-1}(X)$ does not imply that $f^F$ is constant for every  $F \in X_{k-2}$. (For a $k$-dimensional complex with complete $(k-1)$-skeleton, the basis vectors $\delta_{k-2} e_F$ are a simple counterexample.)

\subsection*{Conclusions \ref{Conclusionbb} and \ref{Conclusionbz} - Behaviour on $B^{k-1}(X)$}
For $b \in B^{k-1}(X)$ we have $A_{k-1}(X)b = D_{k-1}(X)b$. If the complex $X$ was regular, i.e.~all $(k-1)$-faces would have the same degree $d$, $B^{k-1}(X)$ would be a subspace of the eigenspace of $d$.

The random complex $X^k(n,p)$ is not regular but with high probability the degrees of all $(k-1)$-faces lie close to the expected average degree $d=p(n-1)$. 
For an arbitrary complex we can fix any positive value $d$ and study the divergences of the degrees from $d$ by considering the diagonal matrix $\error(X) = D_{k-1}(X)-dI$ which has entries $\error(X)_{F,F} =  \deg_X(F) - d$. Then $A_{k-1}(X)b = \error(X) b + db$ for $b \in B^{k-1}(X)$.

It will turn out that our main task is to control the behaviour of $\|\error(X) b\|$ for all $b \in B^{k-1}(X)$.
We manage to reduce this to a question on the links of $(k-2)$-faces: Proposition~\ref{Prop:ReducingToLinks} relates $\|\error(X) b\|$ for every $b \in B^{k-1}(X)$ to the values $\|\error(X)\delta_{k-2} e_F\|$ for $F \in X_{k-2}$, to the behaviour of $\error(X)$ on the coboundaries of elementary cochains. These values in turn match the values $\|\error(\lk F)\1\|$ on the corresponding links.

\begin{proposition}\label{Prop:ReducingToLinks}
Let $X$ be a $k$-dimensional complex with vertex set $[n]$ and complete $(k-1)$-skeleton. Fix some positive value $d$ and let $\error = \error(X) = D_{k-1}(X)-dI$. 
Assume that for all $F \in X_{k-2}$ we have 
$$\|\error\delta e_F\| \leq f(n) \|\delta e_F\|.$$
 Then for all $b \in B^{k-1}(X)$ 
 $$\|\error b\| \leq k \cdot  f(n) \|b\|.$$ 
\end{proposition}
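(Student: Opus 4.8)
The plan is to parametrize $b$ by a minimal-norm potential and then pass to the links through a single Cauchy--Schwarz step, with the factor $k$ arising because each $(k-1)$-face of a complete $(k-1)$-skeleton has exactly $k$ facets of dimension $k-2$. Concretely, since $b \in B^{k-1}(X)=\im\delta_{k-2}$, I would write $b=\delta_{k-2}g$ and choose $g\in C^{k-2}(X)$ orthogonal to $\ker\delta_{k-2}$ (the minimal-norm preimage). Recall that $\error$ is the diagonal matrix with $\error_{G,G}=\deg_X(G)-d$, so all that is really happening is that a diagonal reweighting is being controlled on the subspace of coboundaries.

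First I would expand everything in coordinates. Since $\error$ is diagonal, $\|\error b\|^2=\sum_{G\in X_{k-1}}(\deg_X(G)-d)^2\,b(G)^2$, and since the $(k-1)$-skeleton is complete, for every $(k-1)$-face $G$ we have $b(G)=(\delta_{k-2}g)(G)=\sum_{F\subset G}[G:F]g(F)$, a sum over exactly the $k$ faces $F\in X_{k-2}$ with $F\subset G$. Cauchy--Schwarz then gives $b(G)^2\le k\sum_{F\subset G}g(F)^2$, and substituting and exchanging the order of summation yields
\[
\|\error b\|^2\ \le\ k\sum_{G\in X_{k-1}}(\deg_X(G)-d)^2\sum_{F\subset G}g(F)^2\ =\ k\sum_{F\in X_{k-2}}g(F)^2\sum_{G\supset F}(\deg_X(G)-d)^2.
\]
The inner sum is exactly $\|\error\,\delta e_F\|^2$, so the hypothesis $\|\error\,\delta e_F\|\le f(n)\|\delta e_F\|$ together with $\|\delta e_F\|^2=n-k+1$ bounds it by $f(n)^2(n-k+1)$. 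This is the step that realizes the promised reduction to the links: $\sum_{G\supset F}(\deg_X(G)-d)^2$ is precisely $\|\error(\lk F)\1\|^2$. Collecting terms, $\|\error b\|^2\le k f(n)^2(n-k+1)\,\|g\|^2$.

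The one remaining---and main---obstacle is that the bound is in terms of $\|g\|$ rather than $\|b\|$; since $\delta_{k-2}$ has a kernel, $\|g\|$ cannot in general be controlled by $\|b\|=\|\delta_{k-2}g\|$, and a naive triangle inequality over the basis $\{\delta e_F\}$ would only produce an $\ell_1$-norm of $g$. This is exactly where I would use that $X$ has a complete $(k-1)$-skeleton together with the minimality of $g$. By Lemma~\ref{EigenvaluesCompleteComplex} applied one dimension lower (the relevant faces all have dimension $\le k-1$ and are present in $X$), the combinatorial Laplacian $L^\up_{k-2}=\delta_{k-2}^\ast\delta_{k-2}$ of the complete $(k-1)$-skeleton has only the eigenvalues $0$ and $n$, with the eigenvalue-$n$ eigenspace equal to $(\ker\delta_{k-2})^\perp$. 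Since $g$ was chosen in this eigenspace, $\|b\|^2=\langle\delta_{k-2}^\ast\delta_{k-2}g,g\rangle=n\|g\|^2$, i.e.\ $\|g\|^2=\|b\|^2/n$. Substituting gives $\|\error b\|^2\le k\,\frac{n-k+1}{n}\,f(n)^2\|b\|^2\le k f(n)^2\|b\|^2$, which proves the proposition (in fact with the slightly stronger constant $\sqrt{k}\,f(n)$).
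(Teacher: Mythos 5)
Your proof is correct, and it in fact establishes the slightly stronger bound $\|\error b\|\leq \sqrt{k(n-k+1)/n}\,f(n)\,\|b\|\leq \sqrt{k}\,f(n)\,\|b\|$. The middle of your argument coincides with the paper's (Lemma~\ref{ReducingToLinks}): both route the estimate through a potential $g$ with $\delta_{k-2}g=b$, apply Cauchy--Schwarz over the $k$ facets $F\subset G$, and exchange the order of summation to reduce everything to the quantities $\|\error\delta e_F\|^2=\sum_{G\supset F}(\deg_X(G)-d)^2$. Indeed, your minimal-norm preimage is literally the same cochain as the paper's potential: the paper shows, using $\delta_{k-1}(K_n^k)b=0$, that $b=\tfrac1n\delta_{k-2}h_b$ with $h_b=\partial_{k-1}b$ (Lemma~\ref{ReducingToLinks}a), and since $\tfrac1n h_b\in\im\partial_{k-1}=(\ker\delta_{k-2})^\perp$, it equals your $g$. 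Where the two arguments genuinely diverge is in the step you correctly identify as the main obstacle, namely comparing the potential's norm with $\|b\|$: the paper bounds $\sum_F h_b(F)^2\leq k(n-k+1)\|b\|^2$ by a second application of Cauchy--Schwarz (Lemma~\ref{ReducingToLinks}c), which is lossy, whereas you compute the norm exactly, $\|g\|^2=\|b\|^2/n$, from the spectral gap of $L^\up_{k-2}$ of the complete $(k-1)$-skeleton (Lemma~\ref{EigenvaluesCompleteComplex} one dimension down, whose eigenvalue-$n$ eigenspace is exactly $(\ker\delta_{k-2})^\perp$). This is what improves the constant from the paper's $k(n-k+1)/n\leq k$ to your $\sqrt{k(n-k+1)/n}\leq\sqrt{k}$. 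The trade-off is minor: your route leans on the eigenvalue computation for the complete complex, which the paper proves anyway, so no extra machinery is needed; the paper's version is a self-contained combinatorial manipulation, and its explicit identity $b=\tfrac1n\delta_{k-2}\partial_{k-1}b$ makes the ``averaging over links'' mechanism more visible, but at the price of a weaker constant.
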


\begin{remark} Proposition~\ref{Prop:ReducingToLinks} also holds if $\error$ is replaced by any diagonal $|X_{k-1}|\times|X_{k-1}|$-matrix.
\end{remark}

The proof of Proposition~\ref{Prop:ReducingToLinks} is deferred to the end of this section. Here is how we use it to address Conclusions \ref{Conclusionbb} and \ref{Conclusionbz}.

\begin{proposition}\label{Prop:ConclusionbbConclusionbz}
\renewcommand{\labelenumi}{(\roman{enumi})} 
Let $X$ be a $k$-dimensional simplicial complex with $n$ vertices and complete $(k-1)$-skeleton. Fix some postive value $d$ and suppose that we have $$\sum_{v\in V(\lk F)}(\deg_{\lk(F)}(v)-d)^2 = \|\error(\lk F)\1\|^2 \leq f(n)^2 (n-k+1)$$
for all $F \in X_{k-2}$. Then
\begin{enumerate}
\item $|\langle A_{k-1}b,b \rangle-d| \leq k \cdot f(n)$ for all $b \in B^{k-1}(X)$ with $\|b\|=1$ and
\item $|\langle A_{k-1}b,z \rangle| \leq k \cdot f(n)$ for all $b \in B^{k-1}(X)$, $z \in \orcomp{k-1}{X}$ with $\|b\| = \|z\|=1$.
\end{enumerate}
\end{proposition}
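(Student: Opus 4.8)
The plan is to derive both statements from Proposition~\ref{Prop:ReducingToLinks} by first translating the hypothesis on the links into the hypothesis $\|\error\delta e_F\| \leq f(n)\|\delta e_F\|$ required there. The key observation is that for a fixed $F \in X_{k-2}$, the coboundary $\delta e_F = \delta_{k-2}e_F$ is supported exactly on the $(k-1)$-faces $F \cup \{v\}$ with $v \in V(\lk F)$, and $(\delta e_F)(F\cup\{v\}) = [F\cup\{v\}:F]$, which is $\pm 1$. Hence $\|\delta e_F\|^2 = n-k+1$ (the number of vertices in the link, since the skeleton is complete). Applying the diagonal matrix $\error = D_{k-1}-dI$ entrywise multiplies each coordinate $F\cup\{v\}$ by $\deg_X(F\cup\{v\})-d$; using the straightforward identity $\deg_X(F\cup\{v\}) = \deg_{\lk F}(v)$ (as noted in the proof of Lemma~\ref{GarlandLemma}b), I get $\|\error\delta e_F\|^2 = \sum_{v}(\deg_{\lk F}(v)-d)^2 = \|\error(\lk F)\1\|^2$. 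The hypothesis then gives $\|\error\delta e_F\|^2 \leq f(n)^2(n-k+1) = f(n)^2\|\delta e_F\|^2$, which is exactly the input to Proposition~\ref{Prop:ReducingToLinks}. I conclude $\|\error b\| \leq k\cdot f(n)\|b\|$ for all $b \in B^{k-1}(X)$.

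For statement (i), I would use that for $b \in B^{k-1}(X)$ we have $A_{k-1}b = \error b + db$, as established in the discussion preceding Proposition~\ref{Prop:ReducingToLinks}. Then for a unit $b$,
\[
|\langle A_{k-1}b,b\rangle - d| = |\langle \error b + db, b\rangle - d| = |\langle \error b, b\rangle|,
\]
and by Cauchy--Schwarz $|\langle \error b,b\rangle| \leq \|\error b\|\,\|b\| \leq k\cdot f(n)$. For statement (ii), I again write $A_{k-1}b = \error b + db$ and test against $z \in \orcomp{k-1}{X}$. The crucial point is that $B^{k-1}(X)$ and $\orcomp{k-1}{X}$ are orthogonal complements with respect to the standard inner product, so $\langle b, z\rangle = 0$ and the term $d\langle b,z\rangle$ vanishes. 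This leaves
\[
|\langle A_{k-1}b, z\rangle| = |\langle \error b, z\rangle| \leq \|\error b\|\,\|z\| \leq k\cdot f(n),
\]
again by Cauchy--Schwarz and unit norms.

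The only genuinely substantive ingredient is Proposition~\ref{Prop:ReducingToLinks}, whose proof is deferred; everything here is bookkeeping once that reduction from arbitrary $b \in B^{k-1}(X)$ to the basis coboundaries $\delta e_F$ is available. The main thing to verify carefully is the bookkeeping identity $\|\error\delta e_F\|^2 = \|\error(\lk F)\1\|^2$, i.e.\ that the nonzero coordinates of $\delta e_F$ are precisely indexed by the link vertices and that the corresponding degrees match up; the signs $[F\cup\{v\}:F] = \pm 1$ are irrelevant since $\error$ is diagonal and we take squared norms. I expect no real obstacle here—the identity $A_{k-1}b = \error b + db$ on $B^{k-1}(X)$ and the orthogonality of $B^{k-1}(X)$ and $\orcomp{k-1}{X}$ do all the work, reducing both conclusions to a single application of Cauchy--Schwarz to the bound $\|\error b\| \leq k\cdot f(n)\|b\|$.
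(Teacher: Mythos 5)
Your proposal is correct and follows essentially the same route as the paper's own proof: you translate the link hypothesis into the hypothesis of Proposition~\ref{Prop:ReducingToLinks} via $\|\error\delta e_F\|^2=\sum_{v\notin F}(\deg_{\lk F}(v)-d)^2$ and $\|\delta e_F\|^2=n-k+1$, then use $A_{k-1}b=\error b+db$ on $B^{k-1}(X)$ together with Cauchy--Schwarz (and orthogonality of $b$ and $z$ for part (ii)). This matches the paper's argument step for step.
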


\begin{proof}
As $\deg(F \cup \{v\}) = \deg_{\lk F}(v)$ for $v \notin F$, we have
$$\|\error\delta e_F\|^2 = \sum\nolimits_{H \supset F} (\deg(H) -d)^2 = \sum\nolimits_{v \notin F} (\deg_{\lk F}(v) -d)^2 \leq f(n)^2 (n-k+1).
$$
By Proposition~\ref{Prop:ReducingToLinks} we hence have $\|\error b\| \leq k \cdot f(n) \|b\|$ for all $b \in B^{k-1}(X)$.
Now, let $b \in B^{k-1}(X)$ and $z \in \orcomp{k-1}{X}$.
As $A_{k-1}b = D_{k-1}b = db + \error b,$
we get
$$|\langle A_{k-1}b,b \rangle -  d\|b\|^2 | \leq \|b\| \cdot \|\error b\| \leq k \cdot f(n)\|b\|^2$$ and $$ |\langle A_{k-1}b,z \rangle| \leq |\langle \error b,z\rangle| \leq \|z\| \cdot \|\error b\|  \leq k \cdot f(n)\|z\|\|b\|.$$
\end{proof}

To conclude the proof of Theorem~\ref{GarlandAdjacency} we are missing a small lemma:
\begin{lemma}
\renewcommand{\labelenumi}{(\roman{enumi})} 
Let $G$ be a graph with $n$ vertices with adjacency matrix $A=A(G)$ and let $u=\frac{1}{\sqrt{n}}\1$. Fix a positive value $d$.
Assume that 
\begin{enumerate}
\item $|\langle Au,u \rangle - d| \leq f(n)$,
\item $|\langle Au,w \rangle| \leq g(n)$ for all $w \bot \1$ with $\|w\|=1$ and
\item $|\langle Aw,w \rangle| \leq h(n)$ for all $w \bot \1$ with $\|w\|=1$.
\end{enumerate}
Then $\| \error(G) \1 \|^2 = \sum_{v\in V}(\deg(v)-d)^2 \leq (f(n) +g(n) + h(n))^2 n$. 
\end{lemma}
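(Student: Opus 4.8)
The plan is to reduce the statement to a simple two-term orthogonal decomposition of the vector $(A-dI)\1$ relative to the all-ones direction. The crucial first observation is that for the adjacency matrix of a graph the $v$-th coordinate of $A\1$ is exactly $\sum_w A_{vw}=\deg(v)$, so $A\1$ equals the degree vector and hence $\error(G)\1 = (D-dI)\1 = A\1 - d\1 = (A-dI)\1$. Consequently $\|\error(G)\1\|^2 = \|(A-dI)\1\|^2$, and since $\1 = \sqrt{n}\,u$ this equals $n\|(A-dI)u\|^2 = n\|Au - du\|^2$.

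Next I would split $Au - du$ into its component along $u$ and its component in $\1^\perp$. Writing $Au = \langle Au,u\rangle u + r$ with $r \perp \1$ (using $\langle u,u\rangle = 1$), we obtain the orthogonal decomposition $Au - du = (\langle Au,u\rangle - d)u + r$. By the Pythagorean theorem, $\|Au - du\|^2 = (\langle Au,u\rangle - d)^2 + \|r\|^2$. The first summand is bounded by $f(n)^2$ directly from hypothesis (i).

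For the second summand I would exploit that $\|r\| = \sup\{\langle Au, w\rangle : w \perp \1,\ \|w\|=1\}$, the supremum being attained at $w = r/\|r\|$ when $r\neq 0$ (the case $r=0$ being trivial). Since $\langle u, w\rangle = 0$, testing $Au$ against this unit vector $w\perp\1$ gives $\langle Au, w\rangle = \|r\|$, so hypothesis (ii) yields $\|r\| \leq g(n)$. Combining the two estimates gives $\|Au - du\|^2 \leq f(n)^2 + g(n)^2$, hence $\|\error(G)\1\|^2 \leq n\big(f(n)^2 + g(n)^2\big)$.

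Finally, since $f(n), g(n), h(n)$ are nonnegative (being bounds on absolute values), we have $f(n)^2 + g(n)^2 \leq (f(n)+g(n)+h(n))^2$, which yields the claimed inequality. I would remark that hypothesis (iii) is not actually needed for this bound; it is listed only so that the hypotheses align with those of Theorem~\ref{GarlandAdjacency}, and keeping the extra term $h(n)$ merely weakens the conclusion harmlessly. The only genuinely substantive point is the identification $A\1 = (\deg(v))_v$; everything after that is a routine orthogonal decomposition, so I do not anticipate any real obstacle.
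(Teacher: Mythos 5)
Your proof is correct, and it takes a genuinely different (and slightly sharper) route than the paper. The paper argues via an operator norm: it writes $\error(G)\1 = -\left(\frac{d}{n}J - A\right)\1$, bounds $\|\error(G)\1\| \leq \|\frac{d}{n}J - A\|\cdot\|\1\|$, and then bounds the operator norm $\|\frac{d}{n}J - A\|$ by $f(n)+g(n)+h(n)$ by testing the quadratic form against an arbitrary unit vector $x = \alpha u + \beta w$ (with $w \perp \1$), exactly as in the decomposition argument of Lemma~\ref{LEMMAConditions}; this is where all three hypotheses, including (iii), enter. You instead exploit that the vector being hit is the specific vector $\1 = \sqrt{n}\,u$, lying entirely in the $u$-direction, so an orthogonal (Pythagorean) decomposition of the single vector $Au - du$ suffices: the $u$-component is controlled by (i) and the $\1^\perp$-component by (ii) alone. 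Both arguments hinge on the same initial identification $\error(G)\1 = A\1 - d\1$. What your approach buys is a self-contained computation, the sharper bound $n\bigl(f(n)^2 + g(n)^2\bigr) \leq n\bigl(f(n)+g(n)+h(n)\bigr)^2$, and the correct observation that hypothesis (iii) is superfluous for this conclusion; what the paper's approach buys is brevity, since the quadratic-form decomposition it invokes is already set up for Lemma~\ref{LEMMAConditions}, and a statement whose hypotheses match verbatim those used in Theorem~\ref{GarlandAdjacency}.
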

\begin{proof}
We have $\| \error(G) \1 \| = \| (\frac{d}{n}J -A)\1\| \leq \|\frac{d}{n}J -A\|\cdot\|\1\|$
and the conditions above imply  $\|\frac{d}{n}J -A\| \leq f(n) +g(n) + h(n)$. 
\end{proof}

\subsubsection*{Proof of Proposition~\ref{Prop:ReducingToLinks}}
The proof of Propositon~\ref{Prop:ReducingToLinks} is based on the observations in the following lemma.
Its proof will use the following simple consequence of the Cauchy-Schwarz inequality:
\begin{equation}
\left(\sum_{i\in I} a_i\right)^2 \leq |I| \sum_{i\in I} a_i^2. \label{CauchySchwarz}
\end{equation}

\begin{lemma}\label{ReducingToLinks}
Let $X$ be a $k$-complex with vertex set $[n]$ and complete $(k-1)$-skeleton and let $b \in B^{k-1}(X)$. For every $(k-2)$-face $F \in X_{k-2}$ define
$$h_b(F) := \sum_{v \notin F} [F \cup \{v\}:F] b(F \cup \{v\}).$$
Then
\begin{enumerate}
\item[a)] $b(H) = \frac{1}{n} \sum_{F \subset H, F \in X_{k-2}} [H:F] h_b(F)$ for $H \in X_{k-1}$,
\item[b)]  $\langle\error b,\error b\rangle \leq \frac{k}{n^2} \sum_{F \in X_{k-2}} h_b(F)^2 \langle\error \delta e_F,\error \delta e_F\rangle$,
\item[c)] $\sum_{F \in X_{k-2}} h_b(F)^2 \leq k(n-k+1)\langle b,b \rangle$.
\end{enumerate}
\end{lemma}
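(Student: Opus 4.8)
The plan is to first reinterpret $h_b(F)$ and record two structural facts that drive all three parts. Writing $\delta=\delta_{k-2}$ and letting $\partial=\delta_{k-2}^T$ be the unweighted boundary operator, note that $\delta e_F$ is exactly the cochain $H\mapsto[H:F]$ supported on the $(k-1)$-faces containing $F$, so that
$$h_b(F)=\sum_{v\notin F}[F\cup\{v\}:F]\,b(F\cup\{v\})=\sum_{H\supset F}[H:F]\,b(H)=\langle b,\delta e_F\rangle=(\partial b)(F).$$
Two facts will be used throughout: since $b\in B^{k-1}(X)\subseteq Z^{k-1}(X)$, the cochain $b$ is a cocycle, i.e.\ $\delta_{k-1}b=0$; and $\error$ is diagonal, so $(\error b)(H)=\error_{H,H}\,b(H)$ with $\error_{H,H}=\deg(H)-d$, whence $\langle\error b,\error b\rangle=\sum_H\error_{H,H}^2\,b(H)^2$ and $\langle\error\delta e_F,\error\delta e_F\rangle=\sum_{H\supset F}\error_{H,H}^2$.

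I expect part a) to be the crux. Observe that $\sum_{F\subset H}[H:F]h_b(F)=(\delta\partial b)(H)=(\wlap^{\down}_{k-1}b)(H)$, so a) is precisely the operator identity $\wlap^{\down}_{k-1}b=n\,b$ on $B^{k-1}(X)$. Since $X$ has a complete $(k-1)$-skeleton, $\wlap^{\down}_{k-1}$ and $B^{k-1}$ agree for $X$ and for $K^k_n$, so the identity $L^{\up}_{k-1}(K^k_n)+L^{\down}_{k-1}(K^k_n)=nI$ from the proof of Lemma~\ref{EigenvaluesCompleteComplex}, together with $B^{k-1}=\ker L^{\up}_{k-1}$, immediately yields $L^{\down}_{k-1}b=n\,b$. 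If a self-contained argument is preferred, I would instead expand $\sum_{F\subset H}[H:F]h_b(F)$ and split the inner sum over $v\notin F$ into the single term $v=H\setminus F$ (contributing $b(H)$ per $F$, hence $k\,b(H)$ in total) and the terms with $v\notin H$. For each fixed $v\notin H$, the elementary sign relation $[H\cup\{v\}:H][H:F]+[H\cup\{v\}:F\cup\{v\}][F\cup\{v\}:F]=0$ (an instance of $\partial\partial=0$ on the $k$-simplex $H\cup\{v\}$ and its $(k-2)$-face $F$) rewrites the $v$-block, and the cocycle condition $\delta_{k-1}b=0$ evaluated at $H\cup\{v\}$ collapses it to $b(H)$; summing over the $n-k$ vertices $v\notin H$ gives $(n-k)\,b(H)$. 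Together these give $n\,b(H)$. The main obstacle is getting the signs in this step exactly right.

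Given a), part c) is a direct Cauchy--Schwarz estimate. I would apply \eqref{CauchySchwarz} to $h_b(F)=\sum_{v\notin F}[F\cup\{v\}:F]b(F\cup\{v\})$ over the $n-k+1$ vertices $v\notin F$, using $[F\cup\{v\}:F]^2=1$, to obtain $h_b(F)^2\le(n-k+1)\sum_{v\notin F}b(F\cup\{v\})^2$. Summing over $F\in X_{k-2}$ and noting that each $(k-1)$-face $H$ is obtained as $F\cup\{v\}$ in exactly $k$ ways (choose which vertex of $H$ plays the role of $v$) turns the right-hand side into $k(n-k+1)\langle b,b\rangle$.

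Finally, for part b) I would insert the identity from a) into $\langle\error b,\error b\rangle=\sum_H\error_{H,H}^2b(H)^2$, using $b(H)=\tfrac1n\sum_{F\subset H}[H:F]h_b(F)$. Applying \eqref{CauchySchwarz} to this $k$-term sum (again $[H:F]^2=1$) gives $\error_{H,H}^2 b(H)^2\le\frac{k}{n^2}\sum_{F\subset H}h_b(F)^2\error_{H,H}^2$. Summing over $H$ and swapping the order of summation, the inner sum $\sum_{H\supset F}\error_{H,H}^2$ is exactly $\langle\error\delta e_F,\error\delta e_F\rangle$, which produces the claimed bound $\langle\error b,\error b\rangle\le\frac{k}{n^2}\sum_{F\in X_{k-2}}h_b(F)^2\langle\error\delta e_F,\error\delta e_F\rangle$. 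These two estimates then combine with a) to prove Proposition~\ref{Prop:ReducingToLinks}.
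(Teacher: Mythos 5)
Your proposal is correct. Parts b) and c) are exactly the paper's proof (the same two applications of \eqref{CauchySchwarz}, with the same count of the $k$ facets of $H$, resp.\ the $k$ ways of writing $H=F\cup\{v\}$), and your ``self-contained'' fallback for part a) is also precisely the paper's argument: evaluate $\delta_{k-1}(K_n^k)b=0$ at $H\cup\{v\}$ for each of the $n-k$ vertices $v\notin H$, collapse each $v$-block to $b(H)$ via the sign relation (your form $[H\cup\{v\}:H][H:F]=-[H\cup\{v\}:F\cup\{v\}][F\cup\{v\}:F]$ is equivalent to the one the paper states, both being instances of $\partial\partial=0$ on $H\cup\{v\}$), and add the $k$ diagonal terms with $v=H\setminus F$ to reach $n\,b(H)$. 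Where you genuinely depart from the paper is your primary route to a): the reframing $h_b(F)=\langle b,\delta e_F\rangle=(\partial_{k-1}b)(F)$, so that a) becomes the operator identity $L^\down_{k-1}b=nb$ on $B^{k-1}$, which you read off from $L^\up_{k-1}(K_n^k)+L^\down_{k-1}(K_n^k)=nI$ together with $B^{k-1}(K_n^k)=Z^{k-1}(K_n^k)=\ker L^\up_{k-1}(K_n^k)$. Both ingredients are stated in (the proof of) Lemma~\ref{EigenvaluesCompleteComplex}, nothing there depends on Lemma~\ref{ReducingToLinks}, so there is no circularity, and skeleton-completeness is exactly what makes $\delta_{k-2}$, hence $B^{k-1}$ and $L^\down_{k-1}$, the same for $X$ as for $K_n^k$. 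This route is more conceptual --- it explains a) as the statement that $B^{k-1}$ is the $n$-eigenspace of the down-Laplacian of the complete skeleton rather than as an ad hoc sign computation --- but it is not really shorter in a self-contained account: verifying the off-diagonal entries of $L^\up_{k-1}(K_n^k)+L^\down_{k-1}(K_n^k)=nI$, which the paper only asserts as ``a straightforward calculation,'' amounts to the very same $\partial\partial=0$ sign relation used in the direct proof. Since you supply both derivations, the proposal is complete either way.
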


\begin{proof}
 \begin{enumerate}
  \item[a)] As $X$ has a complete $(k-1)$-skeleton, we have $b \in B^{k-1}(X)= B^{k-1}(K_n^k)$ and $\delta_{k-1}(K_n^k) b = 0$. Thus, for any $H \in X_{k-1}$ and $v \notin H$:
   $$
   0 = (\delta_{k-1}(K_n^k)b) (H \cup \{v\}) = [H \cup \{v\}: H] b(H) + \sum_{F \subset H} [H \cup \{v\}: F \cup \{v\}] b(F \cup \{v\}).
   $$
   Note that $- [H \cup \{v\}: H] [H \cup \{v\}: F \cup \{v\}] = [H:F][F \cup \{v\}:F]$. Thus, we can rearrange:
   $$
   b(H) = - [H \cup \{v\}: H] \sum_{F \subset H} [H \cup \{v\}: F \cup \{v\}]  b(F \cup \{v\}) =  \sum_{F \subset H} [H:F][F \cup \{v\}:F]  b(F \cup \{v\}).
   $$
   Summing over all $v \notin H$ and adding additional multiples of $b(H)$, we get
   \begin{multline*}
   n \cdot b(H) = \sum_{v \notin H} \sum_{F \subset H} [H:F][F \cup \{v\}:F] b(F \cup \{v\}) + k \cdot b(H)\\
   =  \sum_{F \subset H} [H:F] \sum_{v \notin F} [F \cup \{v\}:F] b(F \cup \{v\})
   = \sum_{F \subset H} [H:F] h_b(F).
   \end{multline*}
  \item[b)] By a) and inequality~\eqref{CauchySchwarz} and because $\langle\error \delta e_F,\error \delta e_F\rangle = \sum_{H \supset F} \error (H)^2$ for $F \in X_{k-2}$: 
   \begin{multline*}
   \langle\error b,\error b\rangle = \sum_{H \in X_{k-1}} \error (H)^2 b(H)^2
   = \frac{1}{n^2} \sum_{H \in X_{k-1}} \error (H)^2 \left(\sum_{F \subset H} [H:F] h_b(F)\right)^2\\
   \leq \frac{k}{n^2} \sum_{H \in X_{k-1}} \error (H)^2 \sum_{F \subset H} h_b(F)^2
   = \frac{k}{n^2} \sum_{F \in X_{k-2}} h_b(F)^2 \langle\error \delta e_F,\error \delta e_F\rangle.
   \end{multline*}
  \item[c)] Again by inequality~\eqref{CauchySchwarz}:
   \begin{multline*}
    \sum_{F \in X_{k-2}} h_b(F)^2  \leq \sum_{F \in X_{k-2}} (n-k+1) \cdot \sum_{v \notin F} b(F \cup \{v\})^2\\
    = (n-k+1) \cdot \sum_{H \in X_{k-1}} k \cdot b(H)^2 = k(n-k+1)\langle b,b \rangle.
   \end{multline*}
 \end{enumerate}
\end{proof}
The statements of Lemma~\ref{ReducingToLinks} together yield Proposition~\ref{Prop:ReducingToLinks}:
\begin{proof}[Proof of Propositon~\ref{Prop:ReducingToLinks}]
Let $b \in B^{k-1}(X)$. As $\|\delta e_F\| = \sqrt{n-k+1}$ for $F \in X_{k-2}$, by Lemma~\ref{ReducingToLinks}:
   \begin{multline*}
   \langle\error b,\error b\rangle \leq \frac{k}{n^2} \sum_{F \in X_{k-2}} h_b(F)^2 \langle\error \delta e_F,\error \delta e_F\rangle \leq \frac{k}{n^2} \sum_{F \in X_{k-2}} h_b(F)^2 f(n)^2 \langle\delta e_F,\delta e_F\rangle\\
   \leq k^2\cdot \frac{(n-k+1)^2}{n^2} \cdot f(n)^2  \langle b,b \rangle \leq k^2 \cdot f(n)^2 \langle b,b \rangle.
   \end{multline*}
\end{proof}

\section{The Spectra of Random Complexes}\label{sec:EigenvaluesRandomComplexes}

In this section, we prove Theorem~\ref{EigenvaluesRandomComplexes}, the concentration result on the spectra of the normalized Laplacian and the generalized adjacency matrix of random complexes $X^k(n,p)$.
The basic idea is to reduce the statement to a question on the links of $(k-2)$-faces by applying Theorems~\ref{Garland} and \ref{GarlandAdjacency}. Since for every $(k-2)$-face $F$, the link $\lk(F,X^k(n,p))$ is a random graph with the same distribution as $G(n-k+1,p)$, we can then apply results on the eigenvalues of random graphs. For convenience, we repeat Theorem~\ref{EigenvaluesRandomComplexes}:
{%
\renewcommand\thetheorem{\savedtheoremnumber}%
\begin{theorem}
Let $k\geq2$. For every $c>0$ and every $\gamma > c$ there exists a constant $C>0$ with the following property:
Assume  $p \geq (k+\gamma)\log(n)/n$ and let $d:= p(n-k)$. 
Then for $\gamma_A=C\cdot\sqrt{d}$ and $\gamma_\Delta=C/\sqrt{d}$ the following statements hold with probability at least $1-n^{-c}$:
\begin{enumerate}
\item[\textup{(i)}] The largest $\binom{n-1}{k-1}$ eigenvalues of $A_{k-1}(X^k(n,p))$ lie in the interval $[d-\gamma_A,d+\gamma_A]$, and the remaining $\binom{n-1}{k}$ eigenvalues lie in the interval $[-\gamma_A,+\gamma_A]$.
\item[\textup{(ii)}] The smallest $\binom{n-1}{k-1}$ eigenvalues of $\Lap_{k-1}^\up(X^k(n,p))$ are \textup{(}trivially\textup{)} zero, and the remaining $\binom{n-1}{k}$ eigenvalues lie in the interval $[1-\gamma_\Delta,1+\gamma_\Delta]$. In particular, $\tilde{H}^{k-1}(X^k(n,p);\R)=0$.
\end{enumerate} 
For the adjacency matrix \textup{(i)} even holds for $p\geq \gamma\cdot \log n/n$.
\end{theorem}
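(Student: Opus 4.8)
The plan is to reduce everything to the random graphs arising as links and then invoke the two Garland-type estimates already established. The starting observation is that for a fixed $(k-2)$-face $F$, a pair $\{u,v\}$ of vertices outside $F$ spans an edge of $\lk(F,X^k(n,p))$ precisely when the $k$-face $F\cup\{u,v\}$ is present, and distinct edges correspond to distinct potential $k$-faces; hence each link is \emph{exactly} a copy of $G(n-k+1,p)$, with expected degree $p(n-k)=d$. So I would first apply Theorem~\ref{thm:concentration-graph-eigenvalues} to every link and then take a union bound over the $\binom{n}{k-1}\le n^{k-1}$ faces $F\in X_{k-2}$ (the links are not independent, but a union bound does not require independence). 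The point is to choose the per-link failure exponent large enough, slightly above $k-1+c$, so that the total failure probability stays below $n^{-c}$.

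For the normalized Laplacian (part (ii)), conditioning on the good event I would feed the link eigenvalue bounds $\lambda_2(\Lap(\lk F)),\dots,\lambda_{n-k+1}(\Lap(\lk F))\in[1-C'/\sqrt d,\,1+C'/\sqrt d]$ directly into Garland's estimate (Theorem~\ref{Garland}) with $\lambda_{\min}=1-C'/\sqrt d$ and $\lambda_{\max}=1+C'/\sqrt d$. This places all eigenvalues of $\Lap_{k-1}^\up$ on $\orcomp{k-1}{X}$ in $[1-kC'/\sqrt d,\,1+kC'/\sqrt d]$, which is the asserted interval with $\gamma_\Delta=C/\sqrt d$, $C=kC'$. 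The trivial eigenvectors span $B^{k-1}(X)$, of dimension $\binom{n-1}{k-1}$ by Lemma~\ref{BasisBoundariesCoboundaries}, leaving exactly $\binom{n-1}{k}$ nontrivial ones; and since for $n$ large these nontrivial eigenvalues are positive, $\ker\Lap_{k-1}^\up=Z^{k-1}(X;\R)$ collapses onto $B^{k-1}(X)$, giving $\tilde H^{k-1}(X^k(n,p);\R)=0$.

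For the adjacency matrix (part (i)) I would instead verify the three hypotheses of Theorem~\ref{GarlandAdjacency} for each link. Hypothesis (i) is the statement that the average degree $\langle A(\lk F)u,u\rangle=2|E(\lk F)|/(n-k+1)$ concentrates around $d$, which follows from a Chernoff bound with $f(n)=O(\sqrt d)$; hypotheses (ii) and (iii) are exactly the bilinear bound \eqref{preciseStatementAdjacencyGraphs} applied to $\lk F$, giving $g(n),h(n)=O(\sqrt d)$. Then $\varphi(n)=O(\sqrt d)$, and Theorem~\ref{GarlandAdjacency} yields the top $\binom{n-1}{k-1}$ eigenvalues in $[d-k\varphi,\,d+2k\varphi+kh]$ and the remaining $\binom{n-1}{k}$ in $[-k(\varphi+h),\,kh]$; both are contained in the claimed intervals $[d-\gamma_A,d+\gamma_A]$ and $[-\gamma_A,\gamma_A]$ once $\gamma_A=C\sqrt d$ with $C$ large enough.

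The main obstacle is the probability/threshold bookkeeping, and this is exactly where the two parts diverge. The Laplacian concentration for $G(m,p)$ (via Coja-Oghlan and Hoffman--Kahle--Paquette) requires $p$ above the connectivity threshold of the link, i.e.\ $p\ge(1+\gamma')\log m/m$ with $m=n-k+1$; to make the per-link failure exponent exceed $k-1+c$ one needs $\gamma'>k-1+c$, and since $1+\gamma'=k+\gamma$ up to lower-order terms, this is precisely why the hypothesis for (ii) reads $p\ge(k+\gamma)\log n/n$ and why $\gamma>c$ (so that $\gamma'>k-1+c$ can be arranged) is the exact condition closing the union bound. For the adjacency matrix there is no ``$+1$'' connectivity threshold: the bound \eqref{preciseStatementAdjacencyGraphs} holds for $p\ge\gamma\log n/n$ for every $\gamma>0$ and, moreover, its failure probability can be driven below $m^{-C'}$ for arbitrarily large $C'$ by enlarging the constant in $O(\sqrt d)$, so the $n^{k-1}$ cost of the union bound is absorbed without tightening the threshold, which is why (i) survives already for $p\ge\gamma\log n/n$.
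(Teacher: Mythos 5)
Your proposal is correct and follows essentially the same route as the paper's proof: the links are exact copies of $G(n-k+1,p)$, a union bound over the at most $n^{k-1}$ faces of $X_{k-2}$ with per-link failure exponent $c+k-1$, Garland's estimate (Theorem~\ref{Garland}) for the normalized Laplacian, Theorem~\ref{GarlandAdjacency} fed by the Chernoff bound and \eqref{preciseStatementAdjacencyGraphs} for the adjacency matrix, and the same threshold bookkeeping (the paper makes the identical observation that the adjacency part already closes at $p\geq\gamma\log n/n$). The only step you leave implicit is purity of $X^k(n,p)$, which Theorem~\ref{Garland} formally requires and which the paper verifies via the binomial distribution of the degrees; in your argument it also follows from the good event, since $\lambda_2(\Lap(\lk F))>0$ forces every link to be connected, hence every $(k-1)$-face to have positive degree.
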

\addtocounter{theorem}{-1}%
}%

Observe that $B^{k-1}(K_n^k)\subseteq \ker \Lap^\up_{k-1}(X^k(n,p))$ because $X^k(n,p)$ has a complete $(k-1)$-skeleton, so the multiplicity of $0$ as an eigenvalue of $ \Lap^\up_{k-1}(X^k(n,p))$ is at least $\binom{n-1}{k-1}$.

\begin{proof}[Proof of Theorem~\ref{EigenvaluesRandomComplexes}]
Let $c>0$ and let $\gamma>c$. For $F \in \binom{[n]}{k-1}$, the link $\lk F = \lk(F,X^k(n,p))$ is a random graph $G(n-k+1,p)$. By Theorem~\ref{thm:concentration-graph-eigenvalues} (and \eqref{preciseStatementAdjacencyGraphs} in Section~\ref{subsec:EigenvaluesAdjacencyRandomGraphs}) we can hence choose $C>0$ such that for $p \geq (k+\gamma)\log(n)/n$ the following holds with probability at least $1-n^{-c-k+1}$:
\begin{enumerate}
\item[\textup{(i)}] $|\langle Ax,y\rangle| \leq C \sqrt{d}$ for all unit vectors $x,y$ with $x\perp\1$ and $\frac{1}{n-k+1}\langle A\1,\1\rangle\in [d - C\sqrt{d}, d +C\sqrt{d}]$.
\item[\textup{(ii)}] All nontrivial eigenvalues of $\Delta(\lk F)$ are contained in the interval $[1-C/(k\sqrt{d}),1+C/(k\sqrt{d})]$.
\end{enumerate} 
We first focus on the adjacency matrix:
A union bound yields that for $p \geq (k+\gamma)\log(n)/n$:
\[
\Pr\left[\exists F \in X_{k-2}:|\tfrac{1}{n-k+1}\langle A\1,\1\rangle-d| > C\sqrt{d} \;\text{ or }\;|\langle Ax,y\rangle| > C \sqrt{d}\text{ for some } x\perp\1,y\right] \leq n^{-c}.
\]
This implies that the conditions of Theorem~\ref{GarlandAdjacency} with $f(n),g(n),h(n)=O(\sqrt{d})$, and hence the desired concentration bounds, are fulfilled with probability at least $1-n^{-c}$.
Note that so far by Theorem~\ref{thm:concentration-graph-eigenvalues} it would have sufficed to choose $p\geq \gamma\log(n)/n$.

Now consider the normalized Laplacian. Again, a union bound gives for $p \geq (k+\gamma)\log(n)/n$
$$\Pr\left[ \forall F \in X_{k-2}: 1-C/(k\sqrt{d}) \leq \lambda_2(\Delta(\lk F)) \leq \lambda_{n-k+1}(\Delta(\lk F)) \leq 1+C/(k\sqrt{d})]\right] \geq 1 - n^{-c}.$$
 
For every $(k-1)$-face $H \in \tbinom{[n]}{k}$ of $X^k(n,p)$, the random variable $\deg(H)$ is binomially distributed with parameters $(n-k)$ and $p$. So, for $n$ large enough, the complex $X^k(n,p)$ is pure with probability at least $1-n^{-c}$.
Hence, also the conditions of Theorem~\ref{Garland} are fulfilled with probability at least $1-n^{-c}$.
\end{proof}

\begin{remark}\label{rem:ConditionsEigenvaluesLaplacians}
Note that that the preceding proof works for any random distribution $\mathcal{X}_k(n,p)$ on $k$-dimensional simplicial complexes with $n$ vertices and complete $(k-1)$-skeleton with the property that the link $\lk(F,\mathcal{X}_k(n,p))$ of every $F \in \binom{[n]}{k-1}$ is a random graph with distribution $G(n-k+1,p)$. 
\end{remark}

\section{Spectral vs. Coboundary Expansion}
\label{sec:CheegerCounterexample}
In this section, we prove Theorem~\ref{thm:counterexample}. As mentioned in the introduction, the examples are obtained by a probabilistic construction.

\subsubsection*{Basic Construction} Denote by $Y^k(n,p)$ the random $k$-dimensional simplicial complex with vertex set $V=[n]$ and complete $(k-1)$-skeleton obtained as follows: Randomly choose a map $a\colon  \binom{V}{k}\rightarrow \Z_2$ by setting $a(F)=1$ with probability $1/2$ and $a(F)=0$ otherwise, independently for each $F\in \binom{V}{k}$. Thus, the support of $a$ has the same distribution as the $(k-1)$-faces of the Linial-Meshulam random complex $X^{k-1}(n,1/2)$.

Call $H\in \binom{V}{k+1}$ ``\emph{good}'' iff $H$ contains an \emph{even} number of $(k-1)$-faces $F$ with $a(F)=1$. Every good $H$ is added as a $k$-face to $Y^{k}(n,p)$ independently with probability $p$.
Note that, by construction, $a$ is a $\Z_2$-\emph{cocycle} in the complex $Y^k(n,p)$, i.e., $a\in Z^{k-1}(Y^k(n,p);\Z_2)$.

For any fixed $b\in C^{k-1}(Y^k(n,p);\Z_2)=\Z_2^{\binom{V}{k}}$, the expected normalized Hamming distance between $b$ and the randomly chosen $a$ equals $1/2$. Since there are fewer than $2^{\binom{n}{k-1}}$ \emph{coboundaries} $b\in B^{k-1}(Y^k(n,p);\Z_2)$ and $\binom{n}{k}$ independent random choices for the entries of $a$, a straightforward application of a Chernoff bound (see, e.g., \cite[Theorem~1]{Janson}, \cite[Theorem~2.1]{JLR}) plus a union bound implies that, a.a.s.,  
$a$ has normalized Hamming distance $1/2-o(1)$ from any coboundary, i.e., 
\[
\|[a]\|\geq 1/2-o(1).
\]
In particular, a.a.s.\  $\tilde{H}^{k-1}(Y^k(n,p),\Z_2)\neq 0$. 

Note that  for $H \in \binom{V}{k+1}$, the probability that $H$ is a $k$-face of $Y^k(n,p)$ equals $p/2$. However, in contrast to the model $X^k(n,p/2)$, the decisions for different $k$-faces that share some $(k-1)$-face are not independent. Nevertheless, we can still easily analyze the links of $(k-2)$-faces in $Y^k(n,p)$:

\begin{lemma}\label{LinkIndependence}
For every $(k-2)$-face $H \in (Y^k(n,p))_{k-2}=\binom{V}{k-1}$, the random graph $\lk(H,Y^k(n,p))$ has the distribution $G(n-k+1,p/2)$.
\end{lemma}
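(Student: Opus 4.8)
The plan is to show that for a fixed $(k-2)$-face $H=\{w_1,\dots,w_{k-1}\}\in\binom{V}{k-1}$, the presence of each potential edge of $\lk(H,Y^k(n,p))$ is an independent Bernoulli trial with success probability $p/2$. Recall that the vertices of $\lk(H,\cdot)$ are the $v\in V\setminus H$, and that $\{u,v\}$ is an edge of $\lk(H,Y^k(n,p))$ precisely when the $k$-face $H\cup\{u,v\}\in\binom{V}{k+1}$ belongs to $Y^k(n,p)$. So I must analyze, for distinct pairs $\{u,v\}$ and $\{u',v'\}$ of vertices outside $H$, the joint distribution of the indicator variables that $H\cup\{u,v\}$ and $H\cup\{u',v'\}$ are $k$-faces.

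First I would compute the marginal. Fix a pair $\{u,v\}$ and set $G:=H\cup\{u,v\}\in\binom{V}{k+1}$. By construction, $G$ becomes a $k$-face iff $G$ is \emph{good} (contains an even number of $(k-1)$-faces $F$ with $a(F)=1$) \emph{and} the subsequent independent $p$-coin for $G$ comes up heads. The coin contributes a factor $p$, so it suffices to show $\Pr[G\text{ good}]=1/2$. The goodness of $G$ depends only on the values $a(F)$ for the $k+1$ facets $F\in\binom{G}{k}$ of $G$, which are independent fair coin flips; the parity of their sum is a fair coin, giving $\Pr[G\text{ good}]=1/2$ and hence marginal edge probability $p/2$, matching $G(n-k+1,p/2)$.

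The key step, and the main obstacle, is \emph{independence} across distinct edges. The concern is that two good-ness events may share some of the $a$-values: for two pairs $\{u,v\}$ and $\{u',v'\}$ giving $(k+1)$-sets $G$ and $G'$, the facets of $G$ and of $G'$ can overlap, so $\1[G\text{ good}]$ and $\1[G'\text{ good}]$ are not obviously independent. The clean way to handle this is to show that the \emph{entire collection} $\{\1[G\text{ good}] : G\supseteq H,\ |G|=k+1\}$ is jointly independent, each being an unbiased bit. I would argue this by exhibiting, for each such $G=H\cup\{u,v\}$, a distinguished facet whose $a$-value appears in $G$'s parity but in no other relevant parity, so the parities can be realized as images of the $a$-variables under a linear map over $\Z_2$ that has full rank. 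Concretely, the natural candidate is the facet $G\setminus\{w_1\}=\{w_2,\dots,w_{k-1},u,v\}$ (dropping the least vertex of $H$): as $\{u,v\}$ ranges over all pairs outside $H$, these distinguished facets are distinct, so the corresponding $a$-values are distinct independent fair bits, and one checks the map sending the relevant $a$-values to the tuple of parities is a surjective $\Z_2$-linear map (upper-triangular with respect to the distinguished coordinates). Surjectivity of a linear map applied to a uniform vector over $\Z_2$ forces the image to be uniform, hence the parities are jointly independent unbiased bits. Finally, the independent $p$-coins, being fresh and independent of $a$, preserve this joint independence and scale each edge probability to $p/2$, establishing that $\lk(H,Y^k(n,p))\sim G(n-k+1,p/2)$.

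The delicate point to get right is exactly the choice of distinguished facets guaranteeing the full-rank/surjectivity claim; once that linear-algebra-over-$\Z_2$ observation is in place, everything else is routine. An alternative, perhaps more transparent, route avoids picking facets explicitly: condition on all $a$-values \emph{except} those of the distinguished facets, observe that each goodness bit is an affine function of its own distinguished $a$-value (with the other term fixed by the conditioning), and note that conditionally these bits are independent fair coins because the distinguished variables are independent fair coins and an affine shift of a fair coin is a fair coin; since this holds for every conditioning, the unconditional joint law is a product of $\mathrm{Bernoulli}(1/2)$'s, which after the $p$-coins yields the claimed $G(n-k+1,p/2)$ distribution.
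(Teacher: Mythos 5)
Your proof is correct and takes essentially the same approach as the paper: your ``distinguished facets'' are exactly the paper's ``undecided'' simplices $F_e$ (the paper fixes an arbitrary $F_e$ with $e \subseteq F_e \subseteq H \cup e$, while you make the concrete choice $G\setminus\{w_1\}$), and your closing conditioning argument --- fixing all other $a$-values and noting each goodness bit is an affine function of its own distinguished fair bit --- is precisely the paper's argument. The only cosmetic difference is that the paper first reduces to the case $p=1$ and reintroduces the independent $p$-coins afterwards, whereas you carry them along to the end.
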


\begin{proof}
First note that it suffices to consider the case $p=1$, because $\lk(H,Y^k(n,p))$ carries the distribution attained by taking every edge in $\lk(H,Y^k(n,1))$ independently with probability $p$.

For simplicity, we write $Y$ instead of $Y^k(n,1)$.
Let $U:=V\setminus H$. For $e\in \binom{U}{2}$, consider the event that $e\in \lk(H,Y)$, i.e., that $H\cup e\in Y$. We need to show that these events are mutually independent. To see this, choose and fix, for each $e\in \binom{U}{2}$, an arbitrary $(k-1)$-simplex $F_e$ with $e\subseteq F_e \subseteq H\cup e$; we call these the ``\emph{undecided}'' $(k-1)$-simplices, and let $\mathcal{D}:=\binom{V}{k}\setminus \{F_e\colon e\in \binom{U}{2}\}$ be the set of remaining, ``\emph{decided}'' $(k-1)$-simplices. Note that, by construction, each $k$-simplex of the form $H\cup e$, $e\in \binom{U}{2}$, contains exactly one undecided $(k-1)$-simplex $F_e$ and that these are pairwise distinct. Fix a map $r\colon \mathcal{D} \rightarrow \Z_2$ and condition upon the event that $r$ is the restriction of $a$ to $\mathcal{D}$. For each $e\in \binom{U}{2}$, we have $e\in \lk(H,Y)$ iff $a(F_e)=\sum_{F\in \mathcal{D}, F\subset H\cup e} r(F)$. For a fixed $r$, the (conditional) probability of this happening is $1/2$, and the values $a(F_e)$ are mutually independent since the $F_e$ are pairwise distinct. Thus, for any set of edges $e_1,\ldots, e_\ell\in \binom{U}{2}$ and for any fixed $r$, we get the conditional probability
$\Pr[\forall i: e_i \in \lk(H,Y)\mid a|_\mathcal{D}=r]=(1/2)^\ell.$
Since this holds for all choices of $r$, it also holds 
unconditionally, which proves the lemma.
\end{proof}
For $p \geq (k+\gamma) \log(n)/n$ with $\gamma>0$ we can thus, by this lemma and Remark~\ref{rem:ConditionsEigenvaluesLaplacians}, proceed as in the proof of Theorem~\ref{EigenvaluesRandomComplexes} to show that there exists $\gamma_\Delta=O(1/\sqrt{pn})$ such that a.a.s.~the nontrivial part of the spectrum of $\Lap_{k-1}^\up(Y^k(n,p))$ lies in the interval $[1-\gamma_\Delta,1+\gamma_\Delta]$.

\subsubsection*{Modification} We have so far shown the existence of an infinite family of $k$-dimensional complexes that is spectrally but not $\Z_2$-coboundary expanding.
However, the complexes constructed have non-trivial cohomology groups $\tilde{H}^{k-1}(Y,\Z_2)$, and hence also $\tilde{H}_{k-1}(Y,\Z) \neq 0$, because $a$ is a $\Z_2$-cocycle by construction.

To change this we can add a second round to our experiment and randomly add possible further $k$-simplices as follows: After constructing $Y^k(n,p)$, we add each $H \in \binom{V}{k+1}$ independently with some probability $q$. We denote the obtained  random complex by $Z^k(n,p,q)$. Thus, $Z^k(n,p,q)$ is the union of $Y^k(n,p)$ and the Linial-Meshulam random complex $X^k(n,q)$.
We assume that $p,q \geq C\cdot \log(n)/n$ for some suitably chosen $C$.

To analyze the $\Z_2$-coboundary expansion of $Z=Z^k(n,p,q)$, we first argue that $Z$, a.a.s., contains at least $\frac{p}{2} (1-o(1)) \binom{n}{k+1}$ many $k$-faces:
\[
f_k(Z^k(n,p,q)) \geq \tfrac{p}{2} (1-o(1)) \tbinom{n}{k+1}.
\]
Applying the second moment method it is not hard to see that the number of good $k$-faces, after choosing $a$, is at least $\frac{1}{2}(1-o(1)) \binom{n}{k+1}$ with probability tending to $1$. A Chernoff bound then tell us that a.a.s.\ $f_k(Y^k(n,p)) \geq \frac{p}{2} (1-o(1)) \binom{n}{k+1}$.
As $Y^k(n,p)$ is a subcomplex of $Z$, this yields the desired bound.
With a similar argument, also applying a Chernoff bound,  we get that a.a.s.
\[
|\delta a| \leq \frac{q}{2}(1-o(1)) \binom{n}{k+1}.
\]
As we have $\|[a]\|\geq 1/2-o(1)$ with the same probability as before, we see that a.a.s.\ 
\[
\varepsilon(Z) \leq \frac{\|\delta a\|}{\|a\|}=O\Big(\frac{q}{p}\Big) = o(1),
\]
if $q=o(p)$. In the extremal case $q = C\cdot \log(n)/n$ and $p=1$, we achieve $\varepsilon(Z)=O(\log(n)/n)$.

Furthermore, since $Z$ has $X^k(n,q)$ as a subcomplex, we know that the groups $\tilde{H}^{k-1}(Z,\Z_2) $ and $\tilde{H}_{k-1}(Z,\Z)$ are  a.a.s.\ trivial if $q\geq C\cdot\log n/n$ for $C$ sufficiently large (see \cite{HoffmanKahlePaquette-2013, LinialMeshulam:HomologicalConnectivityRandom2Complexes-2006, MeshulamWallach:HomologicalConnectivityRandomComplexes-2009}).

For the analysis of the spectrum of $\Lap_{k-1}^\up(Z)$, we can again consider the links of \mbox{$(k-2)$}-faces. For $H \in \binom{V}{k-1}$, the random graph $\lk(H,Z)$ is the union of $\lk(H,Y^k(n,p/2))$ and $\lk(H,Z^k(n,q))$. Hence, it has the distribution $G(n-k+1,r)$ with $r=p/2+q-pq/2$, the union of $G(n-k+1,p/2)$ and $G(n-k+1,q)$. As $r \geq p/2$,  we see that also for this construction, a.a.s., the nontrivial part of the spectrum of the normalized Laplacian $\Lap_{k-1}^\up(Z)$ lies in the interval $[1-\gamma_\Delta,1+\gamma_\Delta]$ with $\gamma_\Delta=O(1/\sqrt{rn})$.

\subsubsection*{Acknowledgements} The second author is grateful to Roy Meshulam for helpful discussions during which, in particular, he learned about Garland's results.
We would also like to thank Matt Kahle and the referees of this paper and of the extended abstract for helpful comments.

\bibliographystyle{abbrv}
\bibliography{EigenvaluesRandomComplexes}

\end{document}